\documentclass{article}
\usepackage{amsmath}
\usepackage{amsfonts}
\usepackage{amssymb}
\usepackage{amsthm}
\usepackage{mathtools}
\usepackage{mathrsfs}
\usepackage{tikz,ytableau}
\usetikzlibrary{arrows}
\usepackage[letterpaper,top=2cm,bottom=2cm,left=3cm,right=3cm,marginparwidth=1.75cm]{geometry}

\usepackage[colorlinks=true, allcolors=blue]{hyperref}
\usepackage[font=footnotesize,labelfont=bf]{caption}
\usepackage{bm}
\usepackage[noadjust]{cite}

\usepackage{pgfplots}
\usetikzlibrary{arrows.meta,calc}
\pgfplotsset{compat=1.18}

\usepackage{siunitx}

\newcommand{\dashentry}{\phantom{\num{6720}}\llap{--}}

\newtheorem{theorem}{Theorem}[section]
\newtheorem{corollary}[theorem]{Corollary}
\newtheorem{lemma}[theorem]{Lemma}
\newtheorem{proposition}[theorem]{Proposition}
\newtheorem{problem}[theorem]{Problem}

\theoremstyle{remark}
\newtheorem{definition}[theorem]{Definition}
\newtheorem{remark}[theorem]{Remark}
\newtheorem{example}[theorem]{Example}

\title{Conference signals: Applications, existence, and constructions}
\author{
Kean Fallon\thanks{Department of Mathematics, Iowa State University, Ames, IA}
\and
Joseph W.\ Iverson\footnotemark[1]}
\date{}

\begin{document}

\maketitle
\begin{abstract}
A conference signal is a complex-valued function on a finite abelian group that vanishes at 0 in both the time and frequency domains, and is otherwise flat in both domains. 
For example, the Legendre symbol is a conference signal on $\mathbb{Z}_p$ since it vanishes at $0$, takes values $\pm 1$ away from zero, and is a scalar multiple of its Fourier transform. 
More generally, any nontrivial multiplicative character on a finite field is a conference signal on its additive group.

Until now, conference signals have mainly been studied by number theorists, who wanted to know if multiplicative characters were the only conference signals on finite fields. 
The state of the art uses fancy tools from symplectic geometry to prove that non-character conference signals exist on non-small fields. 
The same tools show conference signals exist on every abelian group of odd order, without saying what they are.

In this (human-friendly!) paper, we explain how conference signals produce other objects of interest in algebraic combinatorics and applied harmonic analysis, and then we present some new, explicit constructions of conference signals on $\mathbb{Z}_n$.
In particular, we give the first explicit conference signals for composite~$n$ and the first explicit non-character examples for prime~$n$.
\end{abstract}

\section{Introduction}

We study the following.\footnote{This paper was written by humans, for humans.
Chatbots are welcome, too---we just don't know what they like.
Section~\ref{sec: AI} details the ways we did and did not use AI.
}

\begin{definition}
A \textbf{conference signal} of order $n$ is a function $f \colon A \to \mathbb{C}$ on an abelian group with order~$n$ such that:
    \begin{itemize}
    \item[(i)]
    $f(0) = 0$,
    \item[(ii)]
    $|f(t)| = 1$ for every $t \neq 0$ in $A$,
    \item[(iii)]
    $\hat{f}(0) = 0$,
    \item[(iv)]
    $|\hat{f}(\alpha)| = \sqrt{n}$ for every $\alpha \neq 0$ in $\hat{A}$.
    \end{itemize}
A conference signal is \textbf{normalized} if $f(1) = 1$ for some distinguished element $1 \neq 0$ in $A$.
\end{definition}

Here, $\hat{f} \colon \hat{A} \to \mathbb{C}$ is the (unnormalized) Fourier transform with
\[
\hat{f}(\alpha)
= \sum_{t \in A} f(t) \overline{\langle t, \alpha \rangle}
\qquad
\forall
\alpha \in \hat{A},
\]
where $\hat{A}$ is the Pontryagin dual (character) group of $A$, written additively.
For the special case $A = \mathbb{Z}_n:=\{0,\ldots,n-1\}$ with addition mod~$n$, we identify $\hat{A}$ with $\mathbb{Z}_n$ in the usual way by setting $\langle t,s \rangle := \omega_n^{st}$ for $\omega_n := e^{2\pi \mathrm{i}/n}$.

In~\cite{Benoist:24}, conference signals are called ``unimodular $\mathcal{C}$-functions''.
We call them ``conference signals'' because they are closely related to (complex) conference matrices and this connection drives a lot of their applications; see Section~\ref{sec: applications}.
Just as conference matrices are reminiscent of Hadamard matrices, conference signals are reminiscent of \textit{CAZACs}~\cite{Chu:72,Bjorck:90,BenedettoCM:19}, i.e., signals that are completely flat in both the time and frequency domains (without a hole at~$0$).

\begin{example}
The vector $\left[ \begin{smallmatrix*}[r] 0 \\ 1 \\ -1 \end{smallmatrix*} \right]$ defines a conference signal on $\mathbb{Z}_3$ since its Fourier transform is $\left[ \begin{smallmatrix*}[r] 0 \\ -\mathrm{i}\sqrt{3} \\ \mathrm{i}\sqrt{3} \end{smallmatrix*} \right]$.
More generally, for any odd prime~$p$, the \textbf{Legendre symbol} $\chi_p \colon \mathbb{Z}_p \to \mathbb{C}$ is given by $\chi_p(0) = 0$, $\chi_p(t) = 1$ when $t$ is a quadratic residue (nonzero square) in $\mathbb{Z}_p$, and $\chi_p(t) = -1$ otherwise.
A famous result of Gauss~\cite{Gauss:11,BerndtEW:98} says that 
\begin{equation}
\label{eq: Legendre Fourier}
\widehat{\chi}_p = 
\begin{cases}
\sqrt{p} \chi_p & \text{if }p = 1 \bmod 4, \\
-\mathrm{i} \sqrt{p} \chi_p & \text{if }p = 3 \bmod 4,
\end{cases}
\end{equation}
so $\chi_p$ is a conference signal.
Even more generally, a \textbf{multiplicative character} on a finite field $\mathbb{F}_q$ is a function $f \colon \mathbb{F}_q \to \mathbb{C}$ such that $f(0) = 0$, $f(1) = 1$, and $f(st) = f(s)f(t)$ for all $s,t \in \mathbb{F}_q$; it is \textbf{nontrivial} if $f(t) \neq 1$ for some $t \neq 0$.
After an appropriate identification of $\widehat{\mathbb{F}_q}$ with $\mathbb{F}_q$, the theory of Gauss sums gives that every nontrivial multiplicative character $f$ is a \textbf{con-eigenvector} of the Fourier transform, meaning that $\hat{f}$ is a scalar multiple of $\overline{f}$ (the entrywise complex conjugate).
It follows that every nontrivial multiplicative character on a finite field is a conference signal.
\end{example}

Before now, conference signals have mostly been studied by number theorists interested in the following problem of Harvey Cohn.

\begin{problem}[Harvey Cohn 1994, p202 of~\cite{Montgomery:94}]
Is every conference signal over a finite field a scalar multiple of a multiplicative character?
\end{problem}

The answer is ``no'' for every field of order~$8$ or more.
For non-prime orders, Choi and Siu~\cite{ChoiS:00} found a continuum of normalized conference signals; their proof is constructive.
In the prime-order case, Bir{\'o}~\cite{Biro:99} instead showed there are only finitely many normalized conference signals. Cohn's problem remained open for prime fields for 30~years until its recent (human!) solution by Benoist.

\begin{proposition}[Yves Benoist 2024, \cite{Benoist:24}]
\label{prop: Benoist}
~
\begin{itemize}
\item[(a)]
If $n$ is odd, then $\mathbb{Z}_n$ admits an odd conference signal.
\item[(b)]
If $p\geq 11$ is prime, then $\mathbb{Z}_p$ admits an odd conference signal that is not a scalar multiple of a multiplicative character.
\end{itemize}
\end{proposition}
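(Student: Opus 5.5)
The plan is to follow Benoist's strategy and rephrase the odd conference signals on $\mathbb{Z}_n$ as the intersection of two real Lagrangian submanifolds, then show that intersection is nonempty using a Floer-theoretic (or Arnold-type) nonvanishing result.

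\textbf{Step 1: reduce to an intersection problem.} An odd $f$ satisfies $f(-t)=-f(t)$, so $f(0)=0$ and $\hat f(0)=0$ hold automatically. Take $n=2m+1$. Then $f$ is determined by $z=(f(1),\dots,f(m))\in\mathbb{C}^m$. Writing $F$ for the restriction of the (odd, sine-type) Fourier transform to odd functions, normalized to be unitary on $\mathbb{C}^m$, the conditions become
\[
z\in T:=\{|z_j|=1\ \forall j\}
\qquad\text{and}\qquad
Fz\in T.
\]
So we need $T\cap F^{-1}(T)\neq\emptyset$. Both $T$ and $F^{-1}(T)$ are Lagrangian tori in $\mathbb{C}^m$. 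Since $F$ is unitary, it is a symplectomorphism, and it preserves the sphere $|z|^2=m$ on which both tori lie.

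\textbf{Step 2: pass to projective space.} Each torus is invariant under the diagonal $U(1)$ action, so we quotient by it: in $\mathbb{CP}^{m-1}$ we get the Clifford torus $T_{\mathrm{Cl}}$ and its image under the projective unitary $[F^{-1}]$. The unitary group is connected, so $[F^{-1}]$ is Hamiltonian isotopic to the identity, and $[F^{-1}](T_{\mathrm{Cl}})$ is a Hamiltonian deformation of $T_{\mathrm{Cl}}$. Cho's computation shows that the Clifford torus has nonzero Floer cohomology, so $HF(T_{\mathrm{Cl}},T_{\mathrm{Cl}})\neq 0$, and Floer's theorem then gives $T_{\mathrm{Cl}}\cap[F^{-1}]T_{\mathrm{Cl}}\neq\emptyset$. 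Lifting any point of this intersection back to the sphere produces a $z$ with $|z_j|=1$ and, up to the phase freedom that the projective quotient absorbs, $|(Fz)_j|=1$. That $z$ gives the conference signal in part~(a). The main obstacle is exactly this nonemptiness step, which relies on Floer homology for monotone tori. The fallback is a degree or Euler-characteristic count, but that gives zero for tori, so it cannot replace the Floer input.

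\textbf{Part (b): a counting argument.} Intersection points of $T_{\mathrm{Cl}}$ with $\phi(T_{\mathrm{Cl}})$ are counted with local systems, and when the intersection is transverse there are at least $\operatorname{rank} HF = 2^{m-1}$ of them. The plan is as follows.
\begin{itemize}
\item[(i)] Show transversality for the relevant $F$, or perturb $F$ and then pass to a limit.
\item[(ii)] Count the odd multiplicative characters mod $p$. There are $(p-1)/2$ of them, which give $(p-1)/2$ points in the projective quotient.
\item[(iii)] Compare: $2^{(p-1)/2-1}$ exceeds $(p-1)/2$ once $p\geq 11$, so some intersection point is not a character.
\end{itemize}
The threshold arises because equality or failure happens for smaller $p$. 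The delicate point in (b) is item (i), the transversality or multiplicity bookkeeping. I would try to handle it by proving that character intersections are transverse, using Gauss-sum derivatives, so that each one contributes exactly one point.
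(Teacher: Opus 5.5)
Your part (a) is correct and is essentially the paper's own argument, namely the proof of Theorem~\ref{thm: Benoist general}, which even works on any abelian group of odd order. You restrict the normalized Fourier transform to odd functions, use the basis $\tfrac{1}{\sqrt{2}}(\delta_t-\delta_{-t})$ so that flatness off $0$ becomes flatness of coordinates, and apply Proposition~\ref{prop: flat hammer}.

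Part (b), however, has a genuine gap in step (i). The logic of (ii)--(iii) is fine \emph{conditionally}: if the intersection consisted only of the $m=\tfrac{p-1}{2}$ character points and each were transverse, the whole intersection would be transverse, and Cho's bound would give at least $2^{m-1}>m$ points. But transversality at the characters is false in general. Take $p\equiv 3 \bmod 4$ (e.g.\ $p=11$), so the Legendre symbol $\chi_p$ is odd, real, and $\widehat{\chi}_p=-\mathrm{i}\sqrt{p}\,\chi_p$.
\begin{itemize}
\item The tangent space at $\chi_p$ to $\{f \text{ odd} : |f|=1 \text{ off } 0\}$ is $\{\mathrm{i}\theta\chi_p : \theta \text{ real and even}\}$. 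This is the space of all purely imaginary odd functions.
\item The tangent space to $\{f \text{ odd} : |\hat{f}|=\sqrt{p} \text{ off } 0\}$ is $\{h : \hat{h}=\mathrm{i}\eta\widehat{\chi}_p,\ \eta \text{ real and even}\}=\{h \text{ odd} : \hat{h} \text{ real}\}$. Since the Fourier transform swaps real odd and purely imaginary odd functions, this is again the space of all purely imaginary odd functions.
\end{itemize}
So the two tangent spaces coincide, also after quotienting by the common direction $\mathrm{i}\chi_p$. Thus $[\chi_p]$ is a maximally degenerate intersection point, and no Gauss-sum computation will make it transverse.

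Consequently, you cannot compare the count $2^{m-1}$ with the number of characters directly. Perturbing $F$ and passing to a limit does not rescue this, since the $\ge 2^{m-1}$ perturbed intersection points may all converge onto degenerate character points. Your explanation of the threshold is also off: $2^{m-1}>m$ already holds at $p=7$ ($4>3$), yet Table~\ref{tbl: all small} records no non-character conference signal on $\mathbb{Z}_7$. The extra Floer rank there must then be carried by degenerate character points, such as the Legendre symbol by the computation above.

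The missing idea is a quantitative bound on the local contribution of each character point to $HF$ (e.g.\ via local Floer homology), degenerate ones included, showing the total is strictly less than $2^{m-1}$ once $p\ge 11$. Alternatively, you need a genuinely different argument. The paper does not supply this: it quotes (b) from Benoist and only remarks that his proof is much more complicated than the one for (a).
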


The proof is non-constructive and ultimately relies on sophisticated machinery from symplectic geometry and Floer homology; see Section~\ref{subsec: Benoist}.
Here, we address the natural followup question:
\begin{center}
\textit{What are those conference signals?}
\end{center}
This question is especially compelling for the authors, who got interested in conference signals because they are useful objects for algebraic combinatorics and applied harmonic analysis; we independently rediscovered the notion without any idea that number theorists had been toying with them for decades.

The next section details applications.
Taking these as motivation, Section~\ref{sec: numerical} reports numerical observations about conference signals, including further conjectures about their (non)existence.
In Section~\ref{sec: existence} we give some nonconstructive proofs of existence using techniques like in~\cite{Benoist:24}.
The first of our main results appears in Section~\ref{sec: explicit composite}, where we construct an explicit infinite family of conference signals with composite order, namely, $n=3p$ for any prime $p>3$.
When $p=3 \bmod 4$, this leads to an \textit{equi-isoclinic tight fusion frame} of $3p$ planes in $\mathbb{C}^{3p-1}$, which was not previously known to exist.
Our second main result appears in Section~\ref{sec: explicit prime}, which gives the first explicit counterexample to Cohn's problem over a prime-order field; the construction works for any prime $p>7$ with $p = 1 \bmod 6$.
Our technique relies on a surprising feature of the discrete Fourier transform over $\mathbb{Z}_p$: when restricted to the $\lambda$-eigenspace of any dilation with $\lambda \neq 1$, the matrix of the Fourier transform is circulant for a certain choice of bases.
The paper ends with discussion in Section~\ref{sec: discussion}.

\section{Applications}
\label{sec: applications}

As we now explain, conference signals produce other objects of interest in algebraic combinatorics and applied harmonic analysis.
For background on harmonic analysis over finite abelian groups, see~\cite{Terras:99}.

\subsection{Conference matrices}

A \textbf{complex conference matrix} of order~$m$ is a matrix $K \in \mathbb{C}^{X \times X}$ such that $|X|=m$, $KK^* = (m-1)I$, and $|K_{ij}| = 1-\delta_{i,j}$ for every $i,j \in X$, where $\delta_{i,j}$ is Kronecker's delta symbol.
To understand the connection with conference signals, let $A$ be a finite abelian group, and associate each $f \colon A \to \mathbb{C}$ with the $A$-circulant matrix $C_f \in \mathbb{C}^{A \times A}$ with entries $(C_f)_{s,t} = f(s-t)$.
Its conjugate transpose is $C_f^* = C_{f^*}$, where $f^*$ is the conjugate reversal with $f^*(t) = \overline{f(-t)}$.
Furthermore, $C_f C_g = C_{f * g}$, where $f*g$ is the convolution with $(f*g)(t) = \sum_{s\in A} f(s) g(t-s)$.
In the following, we also write $1$ for a function that is constantly 1, we write $\delta_0$ for a point mass at~$0$, and we write $\mathbf{1}$ for a column vector of all-ones.

\begin{theorem}
\label{thm: conference matrices}
Let $A$ be a finite abelian group of order~$n$.
Given $f \colon A \to \mathbb{C}$, the following are equivalent:
    \begin{itemize}
    \item[(i)]
    $f$ is a conference signal,
    \item[(ii)]
    $|f|^2 = 1 - \delta_0$ and $f * f^* = n \delta_0 - 1$,
    \item[(iii)]
    for either choice of $\varepsilon \in \{ \pm 1 \}$,
    \[
    K:= \left[ \begin{array}{cc} 0 & \mathbf{1}^\top \\ \varepsilon \mathbf{1} & C_f \end{array} \right]
    \]
    is a complex conference matrix of order $n+1$.
    \end{itemize}
Furthermore, $K$ is symmetric if and only if $f$ is even and $\varepsilon = 1$, while $K$ is skew-symmetric if and only if $f$ is odd and $\varepsilon = -1$.
\end{theorem}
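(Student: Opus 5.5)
We prove (i)$\Leftrightarrow$(ii) with the convolution theorem and (ii)$\Leftrightarrow$(iii) by multiplying out the block matrix $KK^*$. The symmetry claims then come from reading off the blocks of $K$ and $K^\top$.

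\textbf{(i)$\Leftrightarrow$(ii).} Conditions (i) and (ii) of the definition together say exactly $|f|^2 = 1-\delta_0$, since $f(0)=0$ and $|f(t)|=1$ for $t\neq 0$. For the frequency side we use the standard facts $\widehat{f*g}=\hat f\,\hat g$ and $\widehat{f^*}=\overline{\hat f}$. These give $\widehat{f*f^*}=|\hat f|^2$. We also have $\widehat{\delta_0}=1$ and $\hat 1 = n\delta_0$, so
\[
\widehat{n\delta_0 - 1} = n - n\delta_0 .
\]
Hence $|\hat f|^2 = n(1-\delta_0)$, which is exactly conditions (iii) and (iv) of the definition, holds if and only if $f*f^* = n\delta_0 - 1$, by injectivity of the Fourier transform.

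\textbf{(ii)$\Leftrightarrow$(iii).} The modulus condition $|K_{ij}|=1-\delta_{i,j}$ holds if and only if $|f|^2=1-\delta_0$. Indeed, the diagonal of $C_f$ is $f(0)$, its off-diagonal entries are the values $f(s-t)$ with $s\ne t$, and the border entries are $1$ and $\varepsilon$, both of modulus $1$. Next we compute
\[
KK^* = \left[ \begin{array}{cc} n & \mathbf{1}^\top C_f^* \\ C_f\mathbf{1} & \mathbf{1}\mathbf{1}^\top + C_{f*f^*} \end{array} \right],
\]
using $\varepsilon^2=1$, $C_f^*=C_{f^*}$ and $C_fC_{f^*}=C_{f*f^*}$. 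We need this to equal $nI$. The top-left entry is automatically $n$. The off-diagonal blocks vanish if and only if $\sum_t f(t)=\hat f(0)=0$; this entry carries a factor $\varepsilon$, which does not affect whether it vanishes. The bottom-right block equals $nI$ if and only if $C_{f*f^*} = nI - \mathbf{1}\mathbf{1}^\top = C_{n\delta_0-1}$, which holds if and only if $f*f^*=n\delta_0-1$.

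We must check that the extra condition $\hat f(0)=0$ is harmless in both directions. Under (ii), evaluating $|\hat f|^2 = n - n\delta_0$ at $0$ already gives $\hat f(0)=0$. Conversely, (iii) yields it directly from the off-diagonal blocks. So the equivalence holds for either choice of $\varepsilon$.

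\textbf{Symmetry claims.} We compare $K$ with $K^\top$ block by block. The border blocks are $\mathbf{1}^\top$ and $\varepsilon\mathbf{1}$, and the lower-right block is $C_f$ with $C_f^\top = C_{f(-\cdot)}$.
\begin{itemize}
\item $K^\top=K$ requires $\varepsilon=1$ and $f(-t)=f(t)$, that is, $f$ even.
\item $K^\top=-K$ requires $\varepsilon\mathbf{1}=-\mathbf{1}$, so $\varepsilon=-1$, and $f(-t)=-f(t)$, that is, $f$ odd.
\end{itemize}
An odd $f$ automatically has $f(0)=0$, so there is no extra diagonal condition.

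The only step needing care is the bookkeeping in the block product, in particular noticing that $\hat f(0)=0$ appears as a separate condition and then checking that it is implied by (ii). Everything else is routine.
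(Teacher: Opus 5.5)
Your proof is correct and follows the paper's argument essentially verbatim. For (i)$\Leftrightarrow$(ii) you use the identities $\widehat{f*f^*}=|\hat f|^2$ and $\widehat{n\delta_0-1}=n(1-\delta_0)$. For (ii)$\Leftrightarrow$(iii) you use the block computation of $KK^*$, whose off-diagonal blocks vanish exactly when $\hat f(0)=0$, a condition already forced by (ii). The symmetry claims are read off from $K^\top$.

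One tiny slip: in $KK^*$ the off-diagonal blocks $C_f\mathbf{1}$ and $\mathbf{1}^\top C_f^*$ carry no factor of $\varepsilon$, as your own display shows; $\varepsilon$ enters only through $\varepsilon^2\mathbf{1}\mathbf{1}^\top$. This changes nothing in the argument.
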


The equivalence of (i) and (ii) is Lemma~2.4 of~\cite{Benoist:24}.
Here, our interest is the equivalence of (i) and (iii).
The proof is easy, but the result is important.

\begin{proof}[Proof of Theorem~\ref{thm: conference matrices}]
(i)$\iff$(ii) because $f$ is a conference signal if and only if $|f|^2 = 1 -\delta_0$ and $|\hat{f}|^2 = n(1-\delta_0)$, where $|\hat{f}|^2 = \hat{f} \cdot \overline{\hat{f}} = \widehat{f * f^*}$ and $n(1-\delta_0)=(n\delta_0-1)^{\widehat{}}$.
To see that (ii)$\iff$(iii), first compute
\[
KK^*
= \left[ \begin{array}{cc}
\mathbf{1}^\top \mathbf{1} & (C_f \mathbf{1})^* \\
C_f \mathbf{1} & \mathbf{1} \mathbf{1}^\top + C_{f*f^*}
\end{array} \right]
= \left[ \begin{array}{cc}
n & \overline{\hat{f}(0)} \cdot\mathbf{1}^\top \\
\hat{f}(0) \cdot\mathbf{1} & C_{1 + f*f^*}
\end{array} \right].
\]
If (ii) holds, then $C_{1 + f*f^*} = C_{n\delta_0} = nI$ and $\hat{f}(0) = 0$ since (ii) implies (i); furthermore, the diagonal of $C_f$ vanishes and its off-diagonal entries are unimodular since $|f|^2 = 1 - \delta_0$.
Thus, (ii) implies (iii).
Conversely, if (iii) holds, then $C_{1+f*f^*} = n I = C_{n\delta_0}$ so $f*f^* = n \delta_0 - 1$; furthermore, the diagonal of $C_f$ vanishes and its off-diagonal entries are unimodular, so $|f|^2=1-\delta_0$.
Thus, (iii) implies (ii).
The ``furthermore'' part is trivial to check.
\end{proof}

\begin{example}
Taking $f$ to be the Legendre symbol in Theorem~\ref{thm: conference matrices} recovers Paley's construction of conference matrices~\cite{Paley:33,DelsarteGS:71}.
More generally, taking $f$ to be a nontrivial multiplicative character on a finite field in Theorem~\ref{thm: conference matrices} recovers the construction of complex conference matrices in Theorem~5.2 of~\cite{FickusIJM:23}; see also~\cite{EtTaoui:23}.
\end{example}

Complex conference matrices with circulant cores also appear in~\cite{Dita:09}.
Complex conference matrices that are themselves circulant (not their cores) appear in~\cite{EtTaoui:18,FickusS:20}; see also~\cite{UzcateguiContrerasGTV:21}.

\begin{remark}
As noted in~\cite{Dita:09}, for any complex conference matrix $K$ of order~$m$, $H:=\left[ \begin{smallmatrix} K+I & K^*-I \\ K-I & -K^*-I \end{smallmatrix} \right]$ is a \textbf{complex Hadamard matrix} of order $2m$, meaning all its entries are unimodular and $HH^*=2mI$.
Thus, every conference signal of order $n$ gives a complex Hadamard matrix of order $2n+2$.
\end{remark}

\begin{remark}
\label{rem: real CS}
Real conference signals have a combinatorial interpretation, where a function $f \colon A \to \mathbb{R}$ satisfying $|f|^2 = 1 - \delta_0$ is completely determined by the set $D:=\{t \in A : f(t) = -1\}$.
The remaining condition $f*f^*=n\delta_0 - 1$ from Theorem~\ref{thm: conference matrices}(ii) says something about $D$, depending on the value of $n:=|A|$.
Here, $n$ has to be odd for $f$ to be a conference signal since $\hat{f}(0) = \sum_{t\in A} f(t)$ has to vanish.
When $n = 3 \bmod 4$, the autocorrelation condition $f*f^* = n\delta_0 - 1$ holds if and only if $D$ is a \textit{skew Hadamard difference set}~\cite{SeberryY:20}; in this case $f$ is necessarily odd.
Such $D$ exists for some $A$ if and only if $n \equiv 3 \bmod 4$ is a prime power~\cite{CamionM:72}.
When $n = 1 \bmod 4$, the autocorrelation condition $f*f^* = n\delta_0 - 1$ holds if and only if $D$ is a \textit{regular Paley-type partial difference set}~\cite{Ma:94}; in this case $f$ is necessarily even.
Such $D$ exists for some $A$ if and only if~$n \equiv 1 \bmod 4$ is a prime power or else $n \in \{m^4,9m^4\}$ for some odd $m > 1$~\cite{Polhill:10,Wang:20}.
Combining these cases, a real conference signal of order~$n$ exists if and only if $n$ is an odd prime power or $n \in \{m^4,9m^4\}$ for some odd $m > 1$.
\end{remark}

\subsection{Optimal subspace packings}

Our next application comes from the theory of subspace (Grassmannian) packings.
An \textbf{equi-isoclinic tight fusion frame}, abbreviated $\operatorname{EITFF}(d,r,n)$, is a sequence of subspaces in $\mathbb{R}^d$ or $\mathbb{C}^d$ whose orthogonal projections $P_1,\ldots,P_n$ all have rank~$r$ and satisfy $\sum_{i=1}^n P_i = a I$ and $P_i P_j P_i = b P_i$ whenever $i \neq j$, where $a$ and $b$ are constants.
EITFFs are optimal packings in Grassmannian space with respect to the spectral and chordal distances~\cite{DhillonHST:08,FickusJMW:17}, and they find further applications in compressive sensing~\cite{CalderbankTX:15,EldarKB:10,EldarM:09}.

Any symmetric or skew-symmetric complex conference matrix produces an (explicit) EITFF, as detailed in Theorem~5.1 of~\cite{FickusIJM:23}.
Combining that result with Theorem~\ref{thm: conference matrices} gives:

\begin{corollary}
\label{cor: EITFF}
~
\begin{itemize}
\item[(a)]
Every odd conference signal of order~$n \geq 3$ produces an $\operatorname{EITFF}(n-1,2,n)$.
\item[(b)]
Every even conference signal of order~$n\geq 3$ produces an $\operatorname{EITFF}(n,2,n)$.
\end{itemize}
Furthermore, in both (a) and (b), the EITFF is real if the conference signal is real.
\end{corollary}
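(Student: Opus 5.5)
The corollary follows by combining Theorem~\ref{thm: conference matrices} with Theorem~5.1 of~\cite{FickusIJM:23}, so I will mostly verify that the hypotheses match and track the parameters.

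\textbf{Step 1 (a skew or symmetric conference matrix of order $n+1$).} Let $f$ be a conference signal of order $n\ge 3$ on $A$. If $f$ is odd, take $\varepsilon=-1$ in Theorem~\ref{thm: conference matrices}(iii). Then $K$ is a skew-symmetric complex conference matrix of order $m=n+1$. If $f$ is even, take $\varepsilon=1$, and $K$ is a symmetric complex conference matrix of order $m=n+1$. If $f$ is real, $K$ has real entries $0,\pm1$, so it is a real conference matrix.

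\textbf{Step 2 (apply the EITFF construction).} Theorem~5.1 of~\cite{FickusIJM:23} turns a symmetric or skew-symmetric complex conference matrix of order $m$ into an EITFF. I expect the construction to run as follows. Since $K^*=\pm K$ and $KK^*=(m-1)I$, the matrix $K$ is normal with eigenvalues $\pm\sqrt{m-1}$ in the symmetric case and $\pm \mathrm{i}\sqrt{m-1}$ in the skew case. Now form the $2m\times 2m$ Hermitian matrix with diagonal $2\times2$ blocks equal to $I_2$ and off-diagonal blocks built from the entries $K_{ij}$ by a suitable $2\times 2$ unimodular embedding, for instance via $z\mapsto \left[\begin{smallmatrix}\operatorname{Re} z & -\operatorname{Im} z\\ \operatorname{Im} z & \operatorname{Re} z\end{smallmatrix}\right]$ or a complexified variant. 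This Gram matrix is a scaled projection of rank $d$, and its off-diagonal blocks are scalar multiples of unitaries, which is exactly equi-isoclinicity.

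The delicate point is the parameters. The statement says $n$ planes, not $n+1$, with $d=n-1$ in the odd case and $d=n$ in the even case. So the cited theorem must produce from an order-$m$ matrix an $\operatorname{EITFF}(m-2,2,m-1)$ in the skew case and an $\operatorname{EITFF}(m-1,2,m-1)$ in the symmetric case. Presumably this happens by deleting the distinguished row and column, which leaves the core $C_f$ with the bordered structure of Theorem~\ref{thm: conference matrices}(iii). I would check this by counting ranks. With $I$ replaced by the diagonal blocks and $C_f$ in the off-diagonal blocks, the eigenvalues of $C_f$ are $\hat f(\alpha)$: they equal $0$ at $\alpha=0$ and have modulus $\sqrt n$ elsewhere. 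In the odd case these values are purely imaginary, $\pm\mathrm{i}\sqrt n$. Realifying gives eigenvalues $1\pm\sqrt n$ or similar, and one needs a combination whose eigenvalues are only $0$ and one positive constant, with rank $2(n-1)$ or $2n$ as stated. I expect the cited Theorem~5.1 to supply exactly this, so I would simply match its $m$ to our $n+1$ and read off $(d,r,N)$.

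\textbf{Step 3 (reality).} The Gram matrix is built entrywise from $K$ by real-linear operations. When $K$ is real the blocks are real, and then the frames live in $\mathbb{R}^d$, which gives the ``furthermore'' clause.

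The main obstacle is Step~2: quoting Theorem~5.1 of~\cite{FickusIJM:23} in a form whose parameters agree exactly with $(n-1,2,n)$ and $(n,2,n)$, including the drop from $m=n+1$ to $n$ subspaces. If the theorem is phrased differently, I would redo the spectral computation directly on $C_f$ using the Fourier description from Step~2.
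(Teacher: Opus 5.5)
Your proposal follows the paper's argument. Theorem~\ref{thm: conference matrices} with $\varepsilon=-1$ (odd $f$) or $\varepsilon=1$ (even $f$) gives a skew-symmetric or symmetric conference matrix of order $n+1$, real if $f$ is real. Theorem~5.1 of~\cite{FickusIJM:23} then turns it into an $\operatorname{EITFF}(n-1,2,n)$ or $\operatorname{EITFF}(n,2,n)$, with exactly the parameter shift you inferred.

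You should drop the speculative reconstruction in Step~2, because it is wrong as written:
\begin{itemize}
\item Realifying the entries of a non-Hermitian $K$ does not give a Hermitian Gram matrix.
\item Using the core $C_f$ alone leaves an extra eigenvalue on $\mathbf{1}$.
\end{itemize}
A version that does work in the odd case combines the core with the border: take zero diagonal blocks and off-diagonal blocks $\tfrac{1}{\sqrt{n+1}}\left[\begin{smallmatrix} -1 & \sqrt{n}\,f(i-j) \\ -\sqrt{n}\,\overline{f(i-j)} & -1 \end{smallmatrix}\right]$. The resulting block matrix has only the eigenvalues $\sqrt{n+1}$ and $-(n-1)/\sqrt{n+1}$, with multiplicities $n-1$ and $n+1$.
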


Likewise, combining Theorem~3 of~\cite{BlokhuisBE:18} with Theorem~\ref{thm: conference matrices} gives:

\begin{corollary}
Every even conference signal of order $n$ produces a real $\operatorname{EITFF}(n+1,2,n+1)$.
\end{corollary}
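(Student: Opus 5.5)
The plan is to feed the conference matrix of Theorem~\ref{thm: conference matrices} into Theorem~3 of~\cite{BlokhuisBE:18}, so almost all the work is checking that this matrix meets that theorem's hypotheses. Let $f$ be an even conference signal of order~$n$ on $A$. Apply Theorem~\ref{thm: conference matrices}(iii) with $\varepsilon=1$ to get
\[
K=\left[\begin{array}{cc} 0 & \mathbf{1}^\top \\ \mathbf{1} & C_f\end{array}\right],
\]
a complex conference matrix of order $n+1$. By the ``furthermore'' clause of that theorem, $K$ is symmetric because $f$ is even. Thus $K$ is a symmetric complex conference matrix of order $m:=n+1$: it is Hermitian up to conjugation, satisfies $K=K^\top$ and $KK^*=(m-1)I$, has zero diagonal, and has unimodular off-diagonal entries.

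Next I would recall the content of Theorem~3 of~\cite{BlokhuisBE:18}. That theorem realifies such a matrix by replacing each entry $z=x+\mathrm{i}y$ with a $2\times 2$ real block, for example the reflection-type block $\left[\begin{smallmatrix} x & y\\ y & -x\end{smallmatrix}\right]$. Symmetry of $K$ (rather than Hermitian-ness) is exactly what makes the resulting $2m\times 2m$ real matrix $G$ symmetric. With the reflection block, each off-diagonal block is a real orthogonal matrix scaled by $|z|=1$, and the identity $KK^*=(m-1)I$ should translate into $G^2=(m-1)I$.

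Finally I would form the Gram-type matrix $mI\pm\sqrt{m-1}\,\cdots$; more precisely, I would take $P=\tfrac12\bigl(I+G/\sqrt{m-1}\bigr)$ after a suitable rescaling, which is an orthogonal projection of rank~$m$. The $2\times2$ diagonal blocks of this projection give rank-$2$ projections onto $m$ planes in $\mathbb{R}^{m}$, and these satisfy the tight and equi-isoclinic conditions. This yields a real $\operatorname{EITFF}(n+1,2,n+1)$.

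The main obstacle is the compatibility check in the middle step: whether the realified product really equals $(m-1)I$ given only $KK^*=(m-1)I$ and $K=K^\top$. Reflection blocks do not compose like complex multiplication, since $R(z)R(w)$ corresponds to $z\bar w$. I would first verify that $\sum_k R(K_{ik})R(K_{kj})$ corresponds to $\sum_k K_{ik}\overline{K_{kj}}=\sum_k K_{ik}\overline{K_{jk}}=(KK^*)_{ij}$, where the middle equality uses symmetry, which is why evenness of $f$ is needed. If that identity holds, the rest is a direct citation of~\cite{BlokhuisBE:18}.
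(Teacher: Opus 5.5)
Your proposal is correct and follows the paper's route: Theorem~\ref{thm: conference matrices}(iii) with $\varepsilon=1$ turns an even conference signal into a symmetric complex conference matrix of order $n+1$, and Theorem~3 of~\cite{BlokhuisBE:18} converts that matrix into a real $\operatorname{EITFF}(n+1,2,n+1)$. Your sketch of the cited theorem also checks out. Since $R(z)R(w)$ is multiplication by $z\overline{w}$, symmetry of $K$ gives $G^2=(m-1)I$. The one point you should state explicitly is that $\operatorname{tr}G=0$, because the diagonal blocks vanish; this is what forces $\tfrac12(I+G/\sqrt{m-1})$ to have rank exactly $m$. The planes then come from the column pairs of a factorization $P=\Phi^\top\Phi$, not from the diagonal blocks of $P$ themselves.
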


Applying Proposition~\ref{prop: Benoist}(a) to Corollary~\ref{cor: EITFF}(a), we deduce:

\begin{corollary}
For every odd $n \geq 3$, there exists an $\operatorname{EITFF}(n-1,2,n)$.
\end{corollary}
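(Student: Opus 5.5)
The plan is to chain together three results already in hand: Proposition~\ref{prop: Benoist}(a), Theorem~\ref{thm: conference matrices}, and Corollary~\ref{cor: EITFF}(a). Fix an odd $n \geq 3$ and set $A = \mathbb{Z}_n$.

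First, Proposition~\ref{prop: Benoist}(a) gives an odd conference signal $f \colon \mathbb{Z}_n \to \mathbb{C}$. Here ``odd'' means $f(-t) = -f(t)$ for all $t$, which is the parity hypothesis used in Theorem~\ref{thm: conference matrices} and Corollary~\ref{cor: EITFF}(a).

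Second, we apply Corollary~\ref{cor: EITFF}(a) to $f$; its only hypotheses are that $f$ is an odd conference signal of order $n \geq 3$. This yields an $\operatorname{EITFF}(n-1,2,n)$. For transparency, the route inside that corollary is as follows. By Theorem~\ref{thm: conference matrices} with $\varepsilon = -1$, the matrix
\[
K = \left[ \begin{array}{cc} 0 & \mathbf{1}^\top \\ -\mathbf{1} & C_f \end{array} \right]
\]
is a skew-symmetric complex conference matrix of order $n+1$. Skew-symmetry holds because $(C_f)_{t,s} = f(t-s) = -f(s-t) = -(C_f)_{s,t}$. Theorem~5.1 of~\cite{FickusIJM:23} then turns $K$ into $n$ two-dimensional subspaces of $\mathbb{C}^{n-1}$ forming an EITFF.

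There is no real obstacle, since each step is a direct invocation of an earlier result. The only point to check is that the hypotheses match: $n$ odd is exactly Benoist's hypothesis, and $n \geq 3$ is the range required in Corollary~\ref{cor: EITFF}(a). Since $n$ is odd and at least $3$, both conditions hold.
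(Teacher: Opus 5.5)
Your proposal is correct and is the paper's argument: the paper obtains this corollary by applying Corollary~\ref{cor: EITFF}(a) to the odd conference signal on $\mathbb{Z}_n$ supplied by Proposition~\ref{prop: Benoist}(a). Your unpacking of Corollary~\ref{cor: EITFF}(a) is also accurate: Theorem~\ref{thm: conference matrices} with $\varepsilon=-1$ gives a skew-symmetric conference matrix, and Theorem~5.1 of~\cite{FickusIJM:23} turns that matrix into the EITFF.
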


Sadly, this proof is nonconstructive, underlining the need for explicit conference signals.
Theorem~\ref{thm: n = 3p} below constructs an odd conference signal when $n=3p$ for a prime $p>3$ with $p=3 \bmod 4$.

\subsection{Symplectic equiangular tight frames}
Let $[ \cdot, \cdot ]$ be a symplectic (i.e., alternating and nondegenerate) form on $\mathbb{F}^d$, where $\mathbb{F} \in \{ \mathbb{R}, \mathbb{C} \}$ and $d$ is even.
A \textbf{symplectic equiangular tight frame}, abbreviated $\operatorname{SETF}(d,n)$, is a sequence $\{\varphi_i\}_{i\in X}$ of length $|X|=n$ in $\mathbb{F}^d$ whose \textbf{Gram matrix} $\bigl[ \, [\varphi_i, \varphi_j] \, \bigr]_{i,j\in X} \in \mathbb{F}^{X \times X}$
has exactly one nonzero singular value, repeated with multiplicity~$d$, and satisfies $\bigl| [ \varphi_i, \varphi_j ] \bigr| = 1$ whenever $i \neq j$.
See~\cite{FallonLAA:26,Fallon:26}.
The following is an easy consequence of~\cite[Proposition~3.5]{Fallon:26}.

\begin{theorem}
\label{thm: SETFs}
For any $f \colon A \to \mathbb{F}$, $f$ is an odd conference signal of order $n \geq 3$ if and only if $C_f$ is the Gram matrix of an $\operatorname{SETF}(n-1,n)$.
\end{theorem}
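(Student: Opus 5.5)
We prove the equivalence by characterizing Gram matrices of $\operatorname{SETF}(n-1,n)$ intrinsically, and then checking that this characterization for $C_f$ is exactly condition (ii) of Theorem~\ref{thm: conference matrices} together with oddness of $f$.

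First, the intrinsic characterization; we expect this is the content of \cite[Proposition~3.5]{Fallon:26}. A matrix $G \in \mathbb{F}^{X\times X}$ is the Gram matrix of some $\operatorname{SETF}(d,n)$ if and only if the following hold:
\begin{itemize}
\item[(1)] $G^\top = -G$;
\item[(2)] $|G_{ij}| = 1-\delta_{i,j}$;
\item[(3)] $G$ has exactly one nonzero singular value, with multiplicity $d$.
\end{itemize}
Necessity is immediate, since the form is alternating: $[\varphi_i,\varphi_i]=0$ and $[\varphi_j,\varphi_i]=-[\varphi_i,\varphi_j]$. For sufficiency, take a Darboux factorization. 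A skew-symmetric matrix of rank $d$ can be written $G = \Phi^\top J \Phi$ with $\Phi \in \mathbb{F}^{d\times X}$ and $J$ the standard symplectic matrix. Over $\mathbb{C}$ this uses the canonical form of skew-symmetric bilinear forms under congruence; note that congruence, not unitary similarity, is the relevant equivalence here. The columns of $\Phi$ are then the desired vectors. If the cited proposition already packages this, we simply invoke it.

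Second, the translation for $G = C_f$. Condition (1) for $C_f$ says $f(t-s) = -f(s-t)$ for all $s,t$, i.e.\ $f$ is odd. Condition (2) says $|f|^2 = 1-\delta_0$. For (3), note that $C_f$ is a normal matrix diagonalized by characters with eigenvalues $\hat f(\alpha)$. So its singular values are the $|\hat f(\alpha)|$, and (3) says $|\hat f|$ takes a single nonzero value $c$ on exactly $d = n-1$ characters and vanishes on the remaining one.

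Third, we pin down that value. Parseval gives $\sum_\alpha |\hat f(\alpha)|^2 = n\sum_t |f(t)|^2 = n(n-1)$, hence $(n-1)c^2 = n(n-1)$ and $c = \sqrt n$. The main obstacle is showing that the vanishing character is $\alpha = 0$, as conference signals require. Here we use that $f$ is odd: $\hat f(-\alpha) = -\hat f(\alpha)$, so $\hat f(0)=0$ automatically. Since exactly one coefficient vanishes, it must be this one. (For $\mathbb{F}=\mathbb{R}$ nothing changes, since $C_f$ is real and its singular values are still the $|\hat f(\alpha)|$.) Thus (1)--(3) for $C_f$ with $d=n-1$ hold exactly when $f$ is odd and satisfies conditions (i)--(iv) of a conference signal.

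Conversely, if $f$ is an odd conference signal, then $|\hat f| = \sqrt n(1-\delta_0)$. Hence $C_f$ has the single nonzero singular value $\sqrt n$ with multiplicity $n-1$, and (1)--(2) hold as above. Finally, $n \ge 3$ is used only to make $d=n-1\ge 2$ even: $n$ is necessarily odd, because an odd function on a group of even order must vanish at an element of order $2$, contradicting unimodularity away from $0$.
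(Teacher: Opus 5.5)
Your proof is correct and takes essentially the approach the paper intends. The paper gives no argument beyond calling the theorem an easy consequence of \cite[Proposition~3.5]{Fallon:26}, and your proposal supplies the missing steps. First, the congruence normal form for alternating forms characterizes $\operatorname{SETF}(d,n)$ Gram matrices as the skew-symmetric matrices with $|G_{ij}|=1-\delta_{i,j}$ and a single nonzero singular value of multiplicity $d$. Second, for $G=C_f$ these conditions reduce, via diagonalization of $C_f$ by characters, Parseval, and the identity $\hat{f}(-\alpha)=-\hat{f}(\alpha)$ for odd $f$, to exactly the statement that $f$ is an odd conference signal, with $n$ forced to be odd.
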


\begin{remark}
When $\mathbb{C}^d$ is instead equipped with its usual (Hermitian) inner product, it is well known that \textit{equiangular tight frames} with circulant Gram matrices are precisely those given by \textit{difference sets} in finite abelian groups.
(This can be deduced, for instance, by combining Theorems~10.3, 11.1, and~12.3 in~\cite{Waldron:18}.)
In this sense, Theorem~\ref{thm: SETFs} suggests an analogy between conference signals and difference sets; Remark~\ref{rem: real CS} gives other evidence of such a link.
\end{remark}

Benoist's existence result implies the following.

\begin{corollary}
For every odd~$n \geq 3$, there exists a complex $\operatorname{SETF}(n-1,n)$.
\end{corollary}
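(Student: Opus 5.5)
The corollary should follow by chaining Proposition~\ref{prop: Benoist}(a) with Theorem~\ref{thm: SETFs}. Fix an odd $n \geq 3$ and set $A = \mathbb{Z}_n$. By Proposition~\ref{prop: Benoist}(a), $\mathbb{Z}_n$ admits an odd conference signal $f \colon \mathbb{Z}_n \to \mathbb{C}$. Since $n \geq 3$, Theorem~\ref{thm: SETFs} applies with $\mathbb{F} = \mathbb{C}$, and its forward direction says that $C_f$ is the Gram matrix of an $\operatorname{SETF}(n-1,n)$. That frame lives in $\mathbb{C}^{n-1}$, so it is a complex $\operatorname{SETF}(n-1,n)$, which is what we want.

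Two small points need checking. First, the definition of an SETF requires the ambient dimension $d$ to be even. This holds because $n$ is odd, so $d = n-1$ is even. Second, ``the Gram matrix of an SETF'' should genuinely produce a sequence of vectors, not just an abstract matrix. This is already built into the statement of Theorem~\ref{thm: SETFs}, taken from~\cite{Fallon:26}, so nothing more is needed here.

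For intuition, one could also see why Theorem~\ref{thm: SETFs} holds in this case. Oddness of $f$ makes $C_f$ skew-symmetric, and $|f|^2 = 1-\delta_0$ gives a zero diagonal with unimodular off-diagonal entries. From Theorem~\ref{thm: conference matrices}(ii), $C_f C_f^* = C_{n\delta_0 - 1} = nI - \mathbf{1}\mathbf{1}^\top$. Its singular values are therefore $\sqrt{n}$ with multiplicity $n-1$, together with a single $0$. A skew-symmetric matrix of rank $n-1$ then factors as $\Phi^\top J \Phi$ for a symplectic form $J$ on $\mathbb{C}^{n-1}$. I do not expect any real obstacle: all the difficulty sits inside Benoist's nonconstructive existence result, which we take as given.
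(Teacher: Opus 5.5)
Your proof is correct and is essentially the paper's own argument: it takes the odd conference signal on $\mathbb{Z}_n$ from Proposition~\ref{prop: Benoist}(a) and applies Theorem~\ref{thm: SETFs} with $\mathbb{F}=\mathbb{C}$. Your extra sketch of why $C_f$ is such a Gram matrix is also sound: skew-symmetry, $C_fC_f^*=nI-\mathbf{1}\mathbf{1}^\top$ giving singular value $\sqrt{n}$ with multiplicity $n-1$, and the normal form for alternating forms.
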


\section{Numerical observations}
\label{sec: numerical}

Motivated by the above applications, the authors set out to perform a numerical search for conference signals.
It didn't take long for us to notice that conference signals come in multi-packs, and furthermore, every one we found exhibited symmetries.
Specifically, the set of conference signals in $\mathbb{C}^{\mathbb{Z}_n}$ is invariant under all of the following (isometric and real-linear) transformations:
\begin{itemize}
\item[(i)]
$cI$, for any unimodular $c \in \mathbb{C}$;
\item[(ii)]
dilation $(D_m f)(t) = f(m^{-1} t)$,
for any unit $m\in \mathbb{Z}_n^\times$;
\item[(iii)]
the \textbf{discrete Fourier transform} (DFT) $\mathcal{F} f = \tfrac{1}{\sqrt{n}} \hat{f}$;
\item[(iv)]
entrywise complex conjugation $Cf = \overline{f}$.
\end{itemize}
Indeed, (i) is clear,
(ii) works since $(D_m f)^{\widehat{}} = D_{m^{-1}} \hat{f}$,
(iii) works since $\hat{\hat{f}}(t)=n f(-t)$,
and (iv) works since $\widehat{\overline{f}} = \hat{f}^*$.
These transformations generate a group $G$ that permutes the set of conference signals.
We call conference signals \textbf{equivalent} when they lie in the same $G$-orbit.
The \textbf{automorphism group} of a conference signal is its stabilizer in~$G$.
One can show that every element of $G$ takes the form $cC^j \mathcal{F}^k D_m$ for some unimodular $c \in \mathbb{C}$, $j,k \in \{0,1\}$, and $m \in \mathbb{Z}_n^\times$.
Thus, the automorphism group of a conference signal $f$ describes all the dilations $D_m$ or Fourier dilations $\mathcal{F} D_m$ for which $f$ is an eigenvector or a con-eigenvector.

\begin{table}[!b]
\begin{center}
{\small
\begin{tabular}{c|
  S[table-format=3.0]
  S[table-format=4.0]
  S[table-format=4.0]
  S[table-format=3.0]
  S[table-format=4.0]
  S[table-format=2.0]
  S[table-format=1.0]
  S[table-format=2.0]
  S[table-format=3.0]
  S[table-format=3.0]
  S[table-format=4.0]
}
$n$
& {inequiv}
& {num}
& {dil}
& {even}
& {odd}
& {neither}
& {$1$-eig}
& {$(-\mathrm{i})$-eig}
& {coneig}
& {odd$\wedge$coneig}
& {nonmult}
\\ \hline
\rule{0pt}{2.6ex}3  & 1   & 1    & 1    & 0   & 1    & 0  & 0 & 1  & 1   & 1   & 0\\
5  & 2   & 3    & 3    & 1   & 2    & 0  & 1 & 0  & 3   & 2   & 0\\
7  & 3   & 5    & 5    & 2   & 3    & 0  & 0 & 1  & 5   & 3   & 0\\
9  & 2   & 12   & 12   & 0   & 12   & 0  & 0 & 0  & 4   & 4   & {\dashentry}\\
11 & 6   & 19   & 19   & 4   & 15   & 0  & 0 & 1  & 11  & 7   & 10\\
13 & 13  & 79   & 79   & 11  & 60   & 8  & 3 & 0  & 29  & 20  & 68\\
15 & 12 & 96 & 96 & 0 & 88 & 8 & 0 & 8 & 24 & 20 & {\dashentry}\\
17 & 21  & 247  & 247  & 15  & 232  & 0  & 1 & 0  & 31  & 20  & 232\\
19 & 42  & 635  & 635  & 14  & 525  & 96 & 0 & 3  & 69  & 47  & 618\\
21 & 74  & 1324 & 1324 & 0   & 1324 & 0  & 0 & 0  & 84  & 84  & {\dashentry}\\
23 & 110 & 2925 & 2925 & 10  & 2915 & 0  & 0 & 13 & 141 & 131 & 2904\\
25 & 220 & 6720 & 6720 & 120 & 6600 & 0  & 4 & 0  & 200 & 188 & {\dashentry}
\end{tabular}
}
\end{center}

\caption[The results of a numerical search for conference signals over $\mathbb{Z}_n$.]{
The results of a numerical search for conference signals over $\mathbb{Z}_n$.
Here, ``inequiv'' gives the number of distinct equivalence classes we observed, ``num'' gives the number of normalized conference signals we observed, and the remaining columns give the number of normalized conference signals we observed with special properties: \\[3 pt]
\makebox[\linewidth][c]{%
\begin{tabular}{rlrl}
dil: & eigenvector of some nontrivial dilation \qquad &
$1$-eig: & $1$-eigenvector of the DFT \\
even: & even &
$(-\mathrm{i})$-eig: & $(-\mathrm{i})$-eigenvector of the DFT \\
odd: & odd &
coneig: & con-eigenvector of the DFT \\
neither: & neither even nor odd &
odd$\wedge$coneig: & odd con-eigenvector of the DFT \\[2 pt]
\multicolumn{4}{c}{nonmult: \quad not a multiple of a multiplicative character, where $n$ is prime}
\end{tabular}%
}}
\label{tbl: all small}
\end{table}

Finding numerical conference signals (including numerical counterexamples to Cohn's problem) was surprisingly easy with the following techniques:
\begin{itemize}
\item[(i)]
\textbf{Alternating projections.}
Initialize $f_0 \in \mathbb{C}^{\mathbb{Z}_n}$ randomly, then alternately ``fix'' the time and frequency domains by setting
\[
f_{k+1}(t) = 
\left\{ \begin{array}{ll}
\tfrac{f_k(t)}{|f_k(t)|} & \text{if }t \neq 0 \\ 
0 & \text{if }t = 0
\end{array}
\right\}
\qquad
\text{and then}
\qquad
\hat{f}_{k+2}(t) =
\left\{ \begin{array}{ll}
\tfrac{\sqrt{n}\hat{f}_{k+1}(t)}{|\hat{f}_{k+1}(t)|} & \text{if }t \neq 0 \\ 
0 & \text{if }t = 0
\end{array}
\right\}.
\]
This goes quickly, thanks to the fast Fourier transform.
There are other variations on this theme.
For example, after fixing the time domain with $f_{k+1}$, one can fix the frequency domain by making $f_{k+2}$ the projection of $f_{k+1}$ in an eigenspace or con-eigenspace of the DFT.
Other variations encourage (or force) the result to be odd or even. 
\item[(ii)]
\textbf{Energy minimization.}
Given $p > 1$, use any optimization method to solve:
\[
\begin{aligned}
& \text{minimize}
& & \sum_{s \neq 0} \big| \hat{f}(s) \big|^{2p} \\
& \text{subject to}
& & |f(t)|^2 = 1 \quad \forall t \neq 0, \\
&&& f(0)=\hat{f}(0)=0.
\end{aligned}
\]
Indeed, one can use Plancherel's theorem and H\"{o}lder's inequality to show the constraints imply $\sum_{s \neq 0} | \hat{f}(s)|^{2p} \geq n^p(n-1)$, with equality if and only if $f$ is a conference signal.
Sadly, the constraints $|f(t)|^2=1$ are non-convex, and local minima exist.
As a variation, one can optimize over a large domain by replacing the constraint ``$|f(t)|^2 = 1$ for all $t \neq 0$'' with ``$\sum_t |f(t)| = n-1$''; then the same lower bound holds, with the same conditions for equality.
\item[(iii)]
\textbf{Newton's method.}
Express the conditions $|f|^2 = 1-\delta_0$ and $f*f^* =n\delta_0 - 1$ from Theorem~\ref{thm: conference matrices}(ii) as polynomial constraints in the real and imaginary parts of $f$, and use the multivariate form of Newton's method to solve the system numerically.
In many cases, one can upgrade ``numerical conference signal'' to ``exact conference signal nearby'' by using Newton--Kantorovich techniques to prove that Newton's method converges to an exact solution in an explicit neighborhood.
See, for example, Corollary~3.4 in~\cite{CohnKM:16}.
\end{itemize}

\begin{example}
\label{ex: small conference signals}
The authors used techniques like those above to search for numerical (normalized) conference signals over $\mathbb{Z}_n$ for $n \leq 25$.
We do not claim to have found them all.
Table~\ref{tbl: all small} summarizes our findings, and its columns describe emergent features of the data.
For instance, for every odd $n \leq 25$, we found a numerical conference signal over $\mathbb{Z}_n$ that was an odd con-eigenvector of the DFT.
In Section~\ref{subsec: odd coneig}, we will prove such a conference signal exists for every odd~$n$.
We leave extensions of other emergent features as open problems, as below.
\end{example}

\begin{problem}
\label{prob: emergent features}
For each item below, find the answer, understand the proof, and present an intuitive explanation that others can understand.
\begin{itemize}
\item[(a)]
For each $n \equiv 3 \bmod 4$, does there exist a conference signal in the $(-\mathrm{i})$-eigenspace of the DFT?
\item[(b)]
Is every conference signal over $\mathbb{Z}_n$ an eigenvector of some nontrivial dilation?
\item[(c)]
When $n$ is even, are there no conference signals over $\mathbb{Z}_n$?
\item[(d)]
For each $n$, are there only finitely many normalized conference signals over $\mathbb{Z}_n$?
\end{itemize}
\end{problem}

In each case, all the evidence we know suggests the answer is ``yes''.
The conference signals in (a) are necessarily odd; the Legendre symbol is an example when $n$ is prime.
We do not know any other infinite families, but we have certified existence for all appropriate $n < 1000$ with Newton--Kantorovich techniques.
Finally, (d) is known to be true when $n$ is prime~\cite{Biro:99}.

\begin{example}
\label{ex: library}
The ancillary files for the arXiv version of this paper contain a library of numerical conference signals, organized into three parts.
The first part consists of the numerical conference signals from Example~\ref{ex: small conference signals} and Table~\ref{tbl: all small}.
The second part of the library gives a conference signal over $\mathbb{Z}_n$ that is an odd con-eigenvector of the DFT for each odd $n$ with $25 < n < 1000$.
The third part gives a conference signal over $\mathbb{Z}_n$ that lies in the $(-\mathrm{i})$-eigenspace of the DFT for each $n=3 \bmod 4$ with $25 < n < 1000$.
Every numerical conference signal of prime order in the second and third parts is chosen as a numerical counterexample to Cohn's problem.
Finally, the authors used Newton--Kantorovich techniques to prove that every odd member of the library (including every member of the second and third parts) has an exact normalized conference signal nearby, and furthermore, this conference signal is topologically isolated in the set of odd normalized conference signals.
\end{example}

\section{Existence}
\label{sec: existence}

In this section, we prove existence of conference signals nonconstructively.

\subsection{Benoist's technique}
\label{subsec: Benoist}

To begin, we illustrate Benoist's technique from~\cite{Benoist:24} by extending Proposition~\ref{prop: Benoist}(a).
(His proof of Proposition~\ref{prop: Benoist}(b) is much more complicated.)

\begin{theorem}
\label{thm: Benoist general}
Every finite abelian group of odd order admits an odd conference signal.
\end{theorem}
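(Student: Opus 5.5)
For a finite abelian group $A$ of odd order $n$, the plan is to reduce Theorem~\ref{thm: Benoist general} to a nondisplaceability statement for the Clifford torus, as Benoist does for $\mathbb{Z}_n$. Since $n$ is odd, $0$ is the only element of $A$ with $t=-t$. So $A\setminus\{0\}$ splits as $S \sqcup (-S)$ with $|S| = m := (n-1)/2$, and the analogous splitting $\hat{A}\setminus\{0\} = \hat{S}\sqcup(-\hat{S})$ holds in the dual group.

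Let $V$ be the space of odd functions $f\colon A\to\mathbb{C}$. Restriction to $S$ identifies $V$ with $\mathbb{C}^m$, and under this identification the $\ell^2$ norm on $V$ is $2$ times the standard norm on $\mathbb{C}^m$. Every odd $f$ satisfies $f(0)=0$, and also $\hat{f}(0)=\sum_t f(t)=0$. Moreover $\hat{f}$ is again odd, so the unitary DFT $\mathcal{F}=\tfrac{1}{\sqrt n}\,\hat{\ }$ maps $V$ onto the odd functions on $\hat{A}$. Identifying those with $\mathbb{C}^{\hat S}\cong\mathbb{C}^m$, Plancherel and the matching factor of $2$ on both sides show that $\mathcal{F}$ becomes a unitary $U\in U(m)$. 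Consequently an odd $f$ is a conference signal if and only if $f|_S$ lies in the Clifford torus
\[
T = \{ z\in\mathbb{C}^m : |z_j| = 1 \ \forall j\},
\]
and $U(f|_S)\in T$ as well. Conditions (ii) and (iv) on $-S$ and $-\hat S$ follow by oddness, and (i) and (iii) are automatic. Thus we need only show $T\cap U^{-1}(T)\neq\emptyset$.

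To get this intersection, I will use the symplectic input. With the standard symplectic form $\omega=\sum_j dx_j\wedge dy_j$ on $\mathbb{C}^m$, $T$ is a Lagrangian torus. Every unitary matrix preserves $\omega$. Since $U(m)$ is connected, there is a path $U_t$ from $I$ to $U^{-1}$, and each $U_t=\exp(tH)$-type path is generated by a quadratic Hamiltonian. Hence $U^{-1}$ is a Hamiltonian diffeomorphism of $\mathbb{C}^m$. It is compactly supported after cutting off the Hamiltonian outside a large ball containing $T\cup U^{-1}T$, which is harmless since both sets are compact.

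The key external fact, and the main obstacle since it cannot be proved by hand here, is that the Clifford torus $T\subset\mathbb{C}^m$ is \emph{Hamiltonian nondisplaceable}: $\phi(T)\cap T\neq\emptyset$ for every compactly supported Hamiltonian diffeomorphism $\phi$. This follows from the nonvanishing of its Lagrangian Floer homology (Cho; alternatively Chekanov's displacement-energy bounds). I would cite it exactly as Benoist does in~\cite{Benoist:24}. Applying it with $\phi=U^{-1}$ gives a point $z\in T\cap U^{-1}T$. Extending $z$ oddly to $A$ then yields the desired odd conference signal.

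The only group-specific checks are therefore the splitting of $A$ and $\hat{A}$ into $\pm$ halves and the fact that $\mathcal{F}$ preserves oddness with the same normalization on both sides. Both use only that $|A|$ is odd, so the argument works for an arbitrary finite abelian group of odd order, not just $\mathbb{Z}_n$.
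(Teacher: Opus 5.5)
Your reduction is correct and is the same one the paper makes: odd functions, the splitting $A\setminus\{0\}=S\sqcup(-S)$, unitarity of $\mathcal{F}$ on odd functions, and the equivalence ``odd conference signal $\iff$ $f|_S\in T$ and $U(f|_S)\in T$''. The gap is the symplectic input. You cite the claim that $T=\{|z_j|=1\}\subset\mathbb{C}^m$ is nondisplaceable by compactly supported Hamiltonian diffeomorphisms of $\mathbb{C}^m$, and that claim is \emph{false}. The linear Hamiltonian $H(z)=\omega(v,z)$ generates translation by $v$. After you cut it off outside a large ball, its time-one map sends $T$ to $T+v$, which misses $T$ once $|v|>2\sqrt{m}$. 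Every compact subset of $\mathbb{C}^m$ can be displaced this way, so the Floer homology of the product torus in $\mathbb{C}^m$ vanishes. Cho's nonvanishing theorem and the Biran--Entov--Polterovich result are about the Clifford torus in complex projective space. Chekanov's theorem gives only a positive lower bound on displacement energy, not nondisplaceability. More generally, any argument that would apply to every compactly supported Hamiltonian diffeomorphism of $\mathbb{C}^m$ must fail, so the unitarity of $U$ has to enter in an essential way.

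The missing idea is to pass to projective space, which is exactly what Proposition~\ref{prop: flat hammer} encodes. Because $U$ is unitary, it induces a Hamiltonian diffeomorphism of $\mathbb{C}P^{m-1}$. The image of $T$ there is the Clifford torus, which cannot be displaced from itself, so you get a flat $z$ with $Uz$ flat. Rescale so that $|z_j|=1$. Then $\|Uz\|=\|z\|=\sqrt{m}$ together with flatness forces $|(Uz)_j|=1$, so $z\in T\cap U^{-1}(T)$. Equivalently, you can stay in $\mathbb{C}^m$: the open ball of radius $\sqrt{m+1}$ is symplectomorphic to $\mathbb{C}P^{m}$ minus a hyperplane, with $T$ corresponding to the Clifford torus of $\mathbb{C}P^m$ and $U$ to the projective action of $1\oplus U$. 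With either substitution your proof becomes the paper's proof. The paper applies Proposition~\ref{prop: flat hammer} to the matrix of $\mathcal{F}_{\text{odd}}$ in the bases $\tfrac{1}{\sqrt2}(\delta_t-\delta_{-t})$ and $\tfrac{1}{\sqrt2}(\delta_\alpha-\delta_{-\alpha})$.
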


The nail above is easy to pound with the hammer below, which was forged independently by Cho~\cite{Cho:04} and the team of Biran, Entov, and Polterovich~\cite{BiranEP:04}.
In our phrasing, a vector $x \in \mathbb{C}^d$ is \textbf{flat} if $x \neq 0$ and $|x_i|$ is constant across $i \in [d]:=\{1,\ldots,d\}$.

\begin{proposition}[\cite{BiranEP:04,Cho:04}]
\label{prop: flat hammer}
Any unitary matrix sends some flat vector to a flat vector.
\end{proposition}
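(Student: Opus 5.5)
The plan is to recast the statement as a Lagrangian intersection problem in complex projective space and then appeal to Floer theory. Flatness is invariant under nonzero scalars, so I would pass to the projectivization $\mathbb{CP}^{d-1}$ with its Fubini--Study symplectic form. There the flat vectors form exactly the \emph{Clifford torus}
\[
T := \bigl\{ [x_1:\cdots:x_d] : |x_1| = \cdots = |x_d| \bigr\},
\]
which is a Lagrangian $(d-1)$-torus. A unitary $U$ acts on $\mathbb{CP}^{d-1}$ by a symplectic isometry. The claim "$U$ sends some flat vector to a flat vector" is then equivalent to $U(T) \cap T \neq \emptyset$.

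The next step is to note that every such $U$ is a Hamiltonian diffeomorphism. Since $U(d)$ is connected, I would write $U = e^{\mathrm{i}H}$ with $H$ Hermitian and use the path $U_s = e^{\mathrm{i}sH}$, $0 \le s \le 1$. This path is generated by the function $[x] \mapsto \langle Hx, x\rangle/\langle x,x\rangle$ on $\mathbb{CP}^{d-1}$, so each $U_s$ is Hamiltonian. The statement therefore reduces to the following: the Clifford torus is \emph{Hamiltonian non-displaceable} in $\mathbb{CP}^{d-1}$.

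Non-displaceability is the main obstacle, and it cannot be done by elementary means. A direct topological argument fails because $T$ is null-homologous for $d \ge 2$, so homological intersection numbers vanish. For the case $d=2$ there is a cheap check: $T$ is a great circle in $\mathbb{CP}^1 \cong S^2$, and an area-preserving map cannot move a great circle off itself, since the circle bisects the area. For general $d$ I would follow one of two routes.
\begin{itemize}
\item Following Cho, use that $T$ is monotone with minimal Maslov number $2$. This makes the Lagrangian Floer homology $HF(T,T)$ well defined, and it is invariant under Hamiltonian isotopy. One then computes that it is nonzero. The Maslov-index-$2$ holomorphic discs through a point are the $d$ standard discs, and their contributions cancel in the Floer differential for the trivial (or a suitably chosen) local system. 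This gives $HF(T,T) \cong H^*(T) \neq 0$, which forces $U(T) \cap T \neq \emptyset$.
\item Following Biran--Entov--Polterovich, use Calabi quasi-morphisms or the Entov--Polterovich spectral invariants on $\mathbb{CP}^{d-1}$. Here one shows that $T$ is a \emph{stem}, namely the unique non-displaceable fiber of the moment map of the standard torus action, which makes it superheavy. Superheavy sets are non-displaceable.
\end{itemize}
Either way, we obtain a point $[x] \in T$ with $U[x] \in T$. Any representative $x$ is then a flat vector with $Ux$ flat.
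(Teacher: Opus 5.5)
Your proposal is correct and follows the same route the paper indicates: projectivize, identify flat vectors with the Clifford torus in $\mathbb{C}P^{d-1}$, note that a unitary acts as a Hamiltonian diffeomorphism, and invoke the Hamiltonian non-displaceability of the Clifford torus from Cho or Biran--Entov--Polterovich. The paper phrases the last step via Cho's count of at least $2^{d-1}$ points in any transverse intersection, with an empty intersection being vacuously transverse, which is equivalent to your $HF\neq 0$ argument; one small wording fix is that a stem is a fiber all of whose \emph{other} fibers are displaceable, so its non-displaceability is the conclusion rather than part of the definition.
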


The proofs use symplectic geometry and Floer homology to study Hamiltonian diffeomorphisms of $\mathbb{C}P^{d-1}$ and their effects on the \textbf{Clifford torus}, i.e., the set of projective points with flat representatives.
Any unitary matrix induces a Hamiltonian diffeomorphism of $\mathbb{C}P^{d-1}$, and the authors of~\cite{BiranEP:04} and~\cite{Cho:04} show the Clifford torus cannot be displaced from itself by such a transformation.
In fact,~\cite{Cho:04} shows there are at least $2^{d-1}$ intersection points, provided the intersection is transverse.
This implies Proposition~\ref{prop: flat hammer} since an empty intersection would be transverse.

\begin{example}
\label{ex: Bloch flat}
Consider the case of a $2\times 2$ unitary.
As above, we work in projective space.
Here, $\mathbb{C}P^{1}$ can be visualized as the unit sphere in $\mathbb{R}^3$ by means of the \textbf{Bloch sphere}, where the north pole represents the span of $e_1$, the south pole represents the span of $e_2$, and $\left[ \begin{smallmatrix*} x & y & z \end{smallmatrix*} \right]^\top$ represents the image of the rank-one orthogonal projection $\tfrac{1}{2} \left[ \begin{smallmatrix*} 1 + z & x - y \mathrm{i} \\ x + y \mathrm{i} & 1 - z \end{smallmatrix*} \right]$.
Then the equator $z = 0$ represents the Clifford torus of flat projective points, as shown in Figure~\ref{fig: Bloch}.
The effect of any $2 \times 2$ unitary on projective space is to spin the ball (i.e., unit sphere; formally, there is a mapping $\operatorname{U}(2) \to \operatorname{SO}(3)$).
Spinning the ball moves the equator to some other great circle; since any two great circles intersect, some point on the equator moves to another point on the equator.
Hence, any $2 \times 2$ unitary moves some flat vector to another flat vector.
(In fact, there are two projectively distinct such vectors since the great circles intersect twice.)
\end{example}

\begin{remark}
As an immediate consequence of Proposition~\ref{prop: flat hammer}, every finite abelian group admits a CAZAC, i.e., a flat signal with flat Fourier transform.
\end{remark}

\begin{figure}[!b]
\begin{center}
\begin{tikzpicture}[
  x=1.5cm,y=1.5cm,
  line cap=round,line join=round,
  font=\small,
  visible arc/.style={line width=.8pt},
  hidden arc/.style={line width=.55pt,dash pattern=on 1.5pt off 1.3pt},
  intersection/.style={circle,fill=black,draw=white,
    line width=.45pt,minimum size=3.6pt,inner sep=0pt}
]
  \definecolor{equatorcolor}{HTML}{D62728}
  \definecolor{meridiancolor}{HTML}{2455C7}
  \definecolor{rotatedcolor}{HTML}{8E44AD}

  \def\viewazimuth{-65}
  \def\viewelevation{25}
  \pgfmathsetmacro{\sinazimuth}{sin(\viewazimuth)}
  \pgfmathsetmacro{\cosazimuth}{cos(\viewazimuth)}
  \pgfmathsetmacro{\sinelevation}{sin(\viewelevation)}
  \pgfmathsetmacro{\coselevation}{cos(\viewelevation)}

  \def\preparecircle#1#2#3#4#5#6{%
    \pgfmathsetmacro{\ux}{-(#1)*\sinazimuth+(#2)*\cosazimuth}%
    \pgfmathsetmacro{\uy}{-((#1)*\cosazimuth+(#2)*\sinazimuth)
                          *\sinelevation+(#3)*\coselevation}%
    \pgfmathsetmacro{\vx}{-(#4)*\sinazimuth+(#5)*\cosazimuth}%
    \pgfmathsetmacro{\vy}{-((#4)*\cosazimuth+(#5)*\sinazimuth)
                          *\sinelevation+(#6)*\coselevation}%
    \pgfmathsetmacro{\udepth}{((#1)*\cosazimuth+(#2)*\sinazimuth)
                              *\coselevation+(#3)*\sinelevation}%
    \pgfmathsetmacro{\vdepth}{((#4)*\cosazimuth+(#5)*\sinazimuth)
                              *\coselevation+(#6)*\sinelevation}%
    \pgfmathsetmacro{\phase}{atan2(\vdepth,\udepth)}%
  }
  \def\circlearc#1#2#3{%
    \draw[#1,domain=#2:#3,samples=101,variable=\t]
      plot ({\ux*cos(\t)+\vx*sin(\t)},
            {\uy*cos(\t)+\vy*sin(\t)});%
  }
  \def\equator{\preparecircle{1}{0}{0}{0}{1}{0}}
  \def\meridian{\preparecircle{1}{0}{0}{0}{0}{1}}
  \def\rotatedcircle{\preparecircle
    {1/sqrt(10)}{-3/sqrt(10)}{0}
    {9/sqrt(190)}{3/sqrt(190)}{10/sqrt(190)}}

  \shade[shading=radial,inner color=white,outer color=black!7]
    (0,0) circle[radius=1];
  \draw[black!45,line width=.65pt] (0,0) circle[radius=1];

  \equator
  \circlearc{equatorcolor!65,hidden arc}{\phase+90}{\phase+270}
  \meridian
  \circlearc{meridiancolor!65,hidden arc}{\phase+90}{\phase+270}
  \rotatedcircle
  \circlearc{rotatedcolor!65,hidden arc}{\phase+90}{\phase+270}

  \equator
  \circlearc{equatorcolor,visible arc}{\phase-90}{\phase+90}
  \meridian
  \circlearc{meridiancolor,visible arc}{\phase-90}{\phase+90}
  \rotatedcircle
  \circlearc{rotatedcolor,visible arc}{\phase-90}{\phase+90}

  \rotatedcircle
  \node[intersection] at (\ux,\uy) {};
  \node[intersection] at (-\ux,-\uy) {};
\end{tikzpicture}
\end{center}

\caption{
Any $2 \times 2$ unitary maps some flat vector to another flat vector, and it maps some real vector to a flat vector.
Indeed, in the Bloch sphere representation of $\mathbb{C}P^1$, the equator (red) represents flat vectors and the great circle $y=0$ (blue) represents real vectors.
Applying a unitary spins the ball, which sends either choice of red or blue to another great circle (purple).
Since any two great circles intersect, purple meets red somewhere.
That is, some red point moves to red, and some blue point moves to red.
}
\label{fig: Bloch}
\end{figure}

\begin{proof}[Proof of Theorem~\ref{thm: Benoist general}]
Let $\mathbb{C}_{\text{odd}}^A \leq \mathbb{C}^A$ and $\mathbb{C}_{\text{odd}}^{\hat{A}} \leq \mathbb{C}^{\hat{A}}$ be the spaces of odd functions on $A$ and $\hat{A}$, respectively.
Then, abbreviating $n:=|A|$, the normalized Fourier transform $f \mapsto \tfrac{1}{\sqrt{n}} \hat{f}$ determines a unitary transformation $\mathcal{F}_{\text{odd}} \colon \mathbb{C}_{\text{odd}}^A \to \mathbb{C}_{\text{odd}}^{\hat{A}}$.
We apply Proposition~\ref{prop: flat hammer} to a matrix representation of $\mathcal{F}_{\text{odd}}$ for a certain choice of bases.
Specifically, let $t_1,\ldots,t_{(n-1)/2}\in A$ consist of exactly one element from each pair $\{t,-t\}$ with $t \neq 0$, and put $f_j:= \tfrac{1}{\sqrt{2}}(\delta_{t_j}-\delta_{-{t_j}}) \in \mathbb{C}^A$ for each $j \in [(n-1)/2]$, where $\delta_s \in \mathbb{C}^A$ denotes the point mass at $s \in A$.
Then $f_1,\ldots,f_{(n-1)/2}$ is an orthonormal basis for $\mathbb{C}_{\text{odd}}^A$.
(Here we use the fact that $A$ has odd order, so that $\{t,-t\}$ has cardinality~2 whenever $t \neq 0$.)
Furthermore, $f\in \mathbb{C}_{\text{odd}}^A$ is flat on $A\setminus \{0\}$ if and only if its coordinate vector for this basis is flat.

Similarly, let $\alpha_1,\ldots,\alpha_{(n-1)/2} \in \hat{A}$ consist of exactly one element from each pair $\{\alpha,-\alpha\}$ with $\alpha \neq 0$, and define $\varphi_i := \tfrac{1}{\sqrt{2}}(\delta_{\alpha_i} - \delta_{-\alpha_i}) \in \mathbb{C}^{\hat{A}}$ for each $i \in [(n-1)/2]$.
Then $\varphi_1,\ldots,\varphi_{(n-1)/2}$ is an orthonormal basis for~$\mathbb{C}_{\text{odd}}^{\hat{A}}$, and $\varphi \in \mathbb{C}_{\text{odd}}^{\hat{A}}$ is flat on $\hat{A}\setminus\{0\}$ if and only if its coordinate vector for this basis is flat.
Applying Proposition~\ref{prop: flat hammer} to the matrix representation of $\mathcal{F}_{\text{odd}}$ in these bases, we obtain $f \in \mathbb{C}_{\text{odd}}^A$ that is flat on $A\setminus \{0\}$ and for which $\hat{f}$ is flat on $\hat{A}\setminus\{0\}$.
Both $f$ and $\hat{f}$ vanish at $0$ since they are odd; hence, a scalar multiple of~$f$ is an odd conference signal on~$A$.
\end{proof}

\subsection{Odd con-eigenvectors of the Fourier transform}
\label{subsec: odd coneig}

Next, we extend Proposition~\ref{prop: Benoist}(a) even further by showing every abelian group $A$ of odd order admits a conference signal that is an odd con-eigenvector of the Fourier transform.
To make sense of the term ``con-eigenvector'', we must first understand the Fourier transform as a mapping $\mathbb{C}^A \to \mathbb{C}^A$ rather than $\mathbb{C}^A \to \mathbb{C}^{\hat{A}}$.
This is possible because $A \cong \hat{A}$; however, our argument calls for an isomorphism with a little more structure.
Specifically, let $\mathbb{T} \leq \mathbb{C}^\times$ be the group of unimodular numbers.
Following Weil~\cite{Weil:64}, we call $\beta \colon A \times A \to \mathbb{T}$ a \textbf{bicharacter} if
\[
\beta(r,s+t) = \beta(r,s) \cdot \beta(r,t),
\qquad
\beta(r+s,t)=\beta(r,t) \cdot \beta(s,t)
\qquad
\forall r,s,t\in A.
\]
Equivalently, $\beta\colon A \times A \to \mathbb{T}$ is a bicharacter if it curries to a group homomorphism $A \to \hat{A} \colon s \mapsto (t \mapsto \beta(s,t))$.
We call $\beta$ \textbf{nondegenerate} when this curried map is an isomorphism $A \cong \hat{A}$; conversely, any such isomorphism gives a nondegenerate bicharacter.
As an additional property, a bicharacter $\beta$ is called \textbf{symmetric} if $\beta(s,t) = \beta(t,s)$ for all $s,t\in A$.

\begin{example}
\label{ex: Zn bichar}
For $A = \mathbb{Z}_n$, the mapping $\beta(s,t) = \omega_n^{st}$ is a symmetric nondegenerate bicharacter.
The same holds for the mapping $\beta_m(s,t) = \omega_n^{mst}$ whenever $m \in \mathbb{Z}_n^\times$.
\end{example}

\begin{remark}
Every finite abelian group $A$ admits a symmetric nondegenerate bicharacter.
To see this, first apply the fundamental theorem of finitely generated abelian groups to write $A = A_1 \times \cdots \times A_m$ with each $A_j \cong \mathbb{Z}_{n_j}$.
By Example~\ref{ex: Zn bichar}, there exist symmetric nondegenerate bicharacters $\beta_j \colon A_j \times A_j \to \mathbb{T}$ for all $j\in [m]$.
Define $\beta \colon A \times A \to \mathbb{T}$ by
$\beta(s,t) = \prod_{j=1}^m \beta_j(s_j,t_j)$, 
where $s_j,t_j \in A_j$ are the $A_j$-components of $s,t \in A$.
It is easy to see $\beta$ is a symmetric bicharacter, and one can prove it is nondegenerate by showing every member of $\hat{A} \cong \hat{A}_1 \times \cdots \times \hat{A}_m$ lies in the image of the curried map $A \to \hat{A} \colon s \mapsto (t \mapsto \beta(s,t))$.
\end{remark}

Nondegenerate bicharacters allow us to interpret the Fourier transform as a mapping from a space to itself, so that we can talk about its ``eigenvectors'' and ``con-eigenvectors''.
Specifically, given a nondegenerate bicharacter $\beta$ on a finite abelian group $A$ of order $n$, we write $\mathcal{F}_\beta \colon \mathbb{C}^A \to \mathbb{C}^A$ for the unitary transformation given by
\[
(\mathcal{F}_\beta f)(s)
= \tfrac{1}{\sqrt{n}} \hat{f}\bigl( t \mapsto \beta(s,t)\bigr)
= \tfrac{1}{\sqrt{n}} \sum_{t\in A} f(t) \overline{ \beta(s,t) }
\qquad
\forall f \in \mathbb{C}^A,\,
\forall s \in A.
\]
When viewed as a matrix, $\mathcal{F}_\beta \in \mathbb{C}^{A \times A}$ has entries $[ \mathcal{F}_\beta ]_{s,t} = \tfrac{1}{\sqrt{n}} \overline{\beta(s,t)}$, so that $\mathcal{F}_\beta$ is symmetric if and only if $\beta$ is symmetric.
Importantly, symmetry of $\beta$ implies $\mathcal{F}_\beta^2$ is the ``reversal'' operator $D_{-1}$; indeed, character orthogonality gives the last equality below:
\[
\bigl[ \mathcal{F}_\beta^2 \bigr]_{s,t}
= \tfrac{1}{n} \sum_{r\in A} \overline{\beta(s,r) \beta(r,t)}
= \tfrac{1}{n} \sum_{r\in A} \beta(-s,r) \overline{\beta(t,r)}
= \delta_{-s,t}
\qquad
\forall s,t \in A.
\]

\begin{example}
\label{ex: dilated Fourier}
Let $A = \mathbb{Z}_n$, and let $\beta$ and $\beta_m$ be as in Example~\ref{ex: Zn bichar}, where $m \in \mathbb{Z}_n^\times$.
Then $\mathcal{F}_\beta = \mathcal{F}$ is the usual DFT, while $\mathcal{F}_{\beta_m} = D_{m^{-1}} \mathcal{F}=\mathcal{F} D_m$.
\end{example}

\begin{theorem}
\label{thm: odd coneig existence}
Let $A$ be a finite abelian group of odd order, and let $\beta$ be a symmetric nondegenerate bicharacter on~$A$.
Then there exists an odd conference signal $f$ on $A$ for which $\mathcal{F}_\beta f = \overline{f}$.
\end{theorem}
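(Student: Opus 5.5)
The plan is to reduce the statement to a ``real-to-flat'' version of Proposition~\ref{prop: flat hammer}: any unitary matrix sends some nonzero real vector to a flat vector. This is the second phenomenon illustrated in Figure~\ref{fig: Bloch}.

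\textbf{Step 1: restrict to odd functions.} As in the proof of Theorem~\ref{thm: Benoist general}, let $\mathbb{C}^A_{\text{odd}}$ have the orthonormal basis $f_j=\tfrac1{\sqrt2}(\delta_{t_j}-\delta_{-t_j})$ for $j\in[d]$, where $d=(n-1)/2$. Since $\beta(-s,t)=\overline{\beta(s,t)}$, the operator $\mathcal{F}_\beta$ commutes with $D_{-1}$, so it restricts to a unitary operator on $\mathbb{C}^A_{\text{odd}}$. Its matrix in this basis has entries
\[
M_{ij}=\langle \mathcal{F}_\beta f_j,f_i\rangle=\tfrac{1}{\sqrt n}\bigl(\overline{\beta(t_i,t_j)}-\beta(t_i,t_j)\bigr).
\]
This matrix is unitary. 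Because $\beta$ is symmetric, $M$ is also symmetric, i.e.\ $M=M^\top$. Moreover, the basis $f_j$ is real, so entrywise conjugation of a function corresponds to entrywise conjugation of its coordinate vector.

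It therefore suffices to find a flat $x\in\mathbb{C}^d$ with $Mx=\overline{x}$. Then $f=\sum_j x_jf_j$ is odd, and it is flat on $A\setminus\{0\}$. Its Fourier transform is a multiple of $\overline f$, hence also flat away from $0$. Both $f$ and $\hat f$ vanish at $0$ by oddness. Finally, rescaling $x$ by a positive real number preserves the equation $Mx=\overline x$ and makes $|f|=1$ off $0$, so $f$ is a conference signal.

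\textbf{Step 2: Takagi factorization.} Every symmetric unitary matrix factors as $M=VV^\top$ with $V$ unitary. One way to see this: the real and imaginary parts of $M$ are commuting real symmetric matrices, so $M=Q\operatorname{diag}(e^{\mathrm{i}\theta_k})Q^\top$ with $Q$ real orthogonal, and one takes $V=Q\operatorname{diag}(e^{\mathrm{i}\theta_k/2})$.

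With this factorization, $Mx=\overline x$ becomes $VV^\top x=\overline x$. Multiplying by $V^{-1}=V^*$ gives $V^\top x=V^*\overline x=\overline{V^\top x}$. So the condition is exactly that $r:=V^\top x$ is a real vector. Equivalently, $x=\overline{V}\,r$ for some real $r\neq0$. The problem is thus reduced to finding a nonzero real vector $r$ such that the unitary $\overline V$ sends $r$ to a flat vector.

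\textbf{Step 3: the real-to-flat hammer (main obstacle).} The remaining claim is that any unitary sends some real vector to a flat vector. Projectively, this says the Hamiltonian image of the Clifford torus meets $\mathbb{R}P^{d-1}\subset\mathbb{C}P^{d-1}$. This is the genuinely nontrivial input, and I expect it to be the main obstacle.

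I would invoke the non-displaceability of the Clifford torus from $\mathbb{R}P^{d-1}$ proved by Biran, Entov, and Polterovich in~\cite{BiranEP:04}, which comes from the same Floer-theoretic circle of ideas as Proposition~\ref{prop: flat hammer}. For $d=2$ this is exactly the great-circle argument of Example~\ref{ex: Bloch flat} and Figure~\ref{fig: Bloch}, which gives a sanity check. If a self-contained route were wanted instead, I would first try to deduce the real-to-flat statement from Proposition~\ref{prop: flat hammer} applied to an auxiliary unitary, but I do not currently see such a reduction. So the plan relies on the cited symplectic result.
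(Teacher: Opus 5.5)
Your proof is correct and follows the paper's overall structure. Both arguments restrict $\mathcal{F}_\beta$ to odd functions in the basis $f_j$, recast $\mathcal{F}_\beta f=\overline f$ as ``some unitary sends a real vector to a flat vector,'' and then invoke the real-to-flat result. That result is exactly the paper's Proposition~\ref{prop: real flat hammer}, which the paper takes from Alston and Amorim~\cite{AlstonA:12}. So your ``main obstacle'' is the same external input the paper uses, and you can cite it directly.

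The real difference is your Step~2. The paper never factors $M$. Instead it uses two extra facts: $M^2=-I$ (since $\mathcal{F}_\beta^2=D_{-1}$), and $M$ is purely imaginary (your formula gives $M_{ij}=-\tfrac{2\mathrm{i}}{\sqrt n}\operatorname{Im}\beta(t_i,t_j)$). With these it writes down the unitary $U=\tfrac{1}{\sqrt2}(I-M)$ directly, and checks that $Ug$ is a con-eigenvector for every real $g$ because $Mg$ is purely imaginary. In your language, this is just an explicit Takagi factor: $V:=\tfrac{1}{\sqrt2}(I+M)$ is symmetric and unitary, $VV^\top=\tfrac12(I+2M+M^2)=M$, and $\overline V=U$. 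So both arguments parametrize the same real $d$-dimensional solution set $\{x : Mx=\overline x\}=\overline V\,\mathbb{R}^d$.

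The paper's route is shorter and fully explicit, with no simultaneous diagonalization, but it relies on the special structure $M^2=-I$ and $\overline M=-M$. Your route uses only that $M$ is a symmetric unitary. It therefore proves the more general fact that every symmetric unitary matrix $M$ admits a flat $x$ with $Mx=\overline x$.
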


Our proof technique is similar to Benoist's, relying on a nontrivial result of symplectic geometry, this one thanks to Alston and Amorim.

\begin{proposition}[\cite{AlstonA:12}]
\label{prop: real flat hammer}
Any unitary matrix sends some real vector to a flat vector.
\end{proposition}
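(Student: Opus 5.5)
We plan to follow the symplectic route of~\cite{AlstonA:12}, reformulating the statement as a Lagrangian intersection problem in $\mathbb{C}P^{d-1}$, as in the proof sketch for Proposition~\ref{prop: flat hammer}. Let $L_{\mathbb{R}} \subset \mathbb{C}P^{d-1}$ be the real projective space, i.e., the set of lines with a nonzero real representative, and let $T$ be the Clifford torus of lines with a flat representative. Both are Lagrangian for the Fubini--Study form, and both are monotone: $T$ has minimal Maslov number~$2$, and $L_{\mathbb{R}}$ has minimal Maslov number~$d$. A unitary $U$ sends some real vector to a flat vector if and only if $U(L_{\mathbb{R}}) \cap T \neq \emptyset$. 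Since $\operatorname{U}(d)$ is connected and acts on $\mathbb{C}P^{d-1}$ by Hamiltonian diffeomorphisms (via the moment map $x \mapsto \langle Hx,x\rangle/\|x\|^2$ for Hermitian~$H$), the map $U$ is Hamiltonian isotopic to the identity. It therefore suffices to show that $L_{\mathbb{R}}$ cannot be Hamiltonian-displaced from $T$.

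The small cases are handled directly. For $d=1$ the statement is trivial. For $d=2$ it is the Bloch-sphere argument of Example~\ref{ex: Bloch flat} and Figure~\ref{fig: Bloch}: both sets are great circles, and any two great circles meet.

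For $d \geq 3$, we will show that the Lagrangian Floer cohomology $HF(L_{\mathbb{R}},T;\Lambda)$, taken with $\mathbb{Z}_2$-Novikov coefficients, is nonzero. Since Floer cohomology is invariant under Hamiltonian isotopy, nonvanishing forces $U(L_{\mathbb{R}}) \cap T \neq \emptyset$ for every~$U$. The steps are as follows.

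\begin{itemize}
\item[(1)]
\textbf{The chain complex.}
The unperturbed intersection $L_{\mathbb{R}} \cap T$ is the set of real flat lines, namely the sign vectors modulo~$\pm 1$; there are $2^{d-1}$ of them. This intersection is transverse. At a real flat point $x$, the tangent space to $T$ is spanned by the diagonal imaginary perturbations $\mathrm{i}\,\mathrm{diag}(\theta)x$ modulo~$\mathrm{i}x$, while the tangent space to $L_{\mathbb{R}}$ consists of the real vectors orthogonal to~$x$. These two subspaces are complementary. Hence the Floer complex $CF$ has rank $2^{d-1}$.
\item[(2)]
\textbf{The differential is well defined.}
We need $\partial^2=0$, which requires that the obstruction terms $m_0$ of the two Lagrangians agree. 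For the monotone torus, $m_0(T)$ is the count of Maslov-$2$ disks, which equals $d$ times the unit. For $L_{\mathbb{R}}$ with $d\geq 3$, there are no Maslov-$2$ disks, so $m_0(L_{\mathbb{R}})=0$. We expect this mismatch to be cured by working with $\mathbb{Z}_2$ coefficients and by twisting $T$ by a suitable local system (a bounding cochain) that kills its potential mod~$2$. This is what Alston--Amorim arrange using the toric structure.
\item[(3)]
\textbf{The differential vanishes.}
Complex conjugation is an antiholomorphic involution $\tau$ fixing $L_{\mathbb{R}}$ pointwise and preserving $T$. Reflecting a holomorphic strip $u$ by $\tau$ and reversing its time variable gives an involution on the moduli spaces of strips between intersection points. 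We expect the fixed strips either not to exist or to be excluded in the rigid (index~$1$) spaces. In that case the contributions cancel in pairs mod~$2$, so $\partial=0$ and $HF \cong (\Lambda)^{2^{d-1}} \neq 0$.
\end{itemize}

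The main obstacle is step~(3), together with the obstruction bookkeeping in step~(2). Specifically, one must control the fixed locus of the conjugation involution on the index-$1$ strip spaces. My first attempt would be to use the toric $T^{d-1}$-action together with Cho's explicit classification of holomorphic disks in $\mathbb{C}P^{d-1}$ with boundary on~$T$. This would let us write the strips explicitly via Blaschke-type products in homogeneous coordinates, check that the strips pair off under~$\tau$, and verify that regularity holds for the standard complex structure.
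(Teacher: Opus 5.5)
Your overall framework is the one behind the result the paper cites: reduce to Hamiltonian non-displaceability of $\mathbb{R}P^{d-1}$ from the Clifford torus $T$, then detect it with Floer cohomology. The paper itself gives no proof beyond that citation and the $d=2$ Bloch-sphere picture. Step~(1) is fine. Step~(3), however, is wrong, and so is its conclusion $HF\cong\Lambda^{2^{d-1}}$.

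Conjugation does not act on a single moduli space $\mathcal{M}(x,y)$. Since $\tau\circ u$ is antiholomorphic, it must be composed with an anticonformal self-map of $\mathbb{R}\times[0,1]$, and every such map either swaps the two ends or swaps the two boundary edges. So your map $u\mapsto\tau\circ u(-s,t)$ is a bijection $\mathcal{M}(x,y)\to\mathcal{M}(y,x)$, because $\tau$ fixes the real intersection points. This only shows that the mod-$2$ matrix of $\partial$ is symmetric.

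In fact $\partial\neq0$. Schwarz reflection by $\tau$ across the $\mathbb{R}P^{d-1}$ edge doubles a strip to a holomorphic disk with boundary on $T$ and twice the Maslov index. By Cho's classification, the index-one strips are therefore exactly the two halves of the Maslov-$2$ disks $w\mapsto[x_1:\cdots:x_iw:\cdots:x_d]$, $|w|\le1$, through a real flat point $x$. Hence $\partial x=\sum_{i=1}^d x^{(i)}$, where $x^{(i)}$ flips the $i$th sign and all terms carry the same area weight. For $d=2$ the two terms coincide and cancel, which recovers your Bloch-sphere case. For $d\ge3$ they are $d$ distinct generators.

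For $d=4$, for instance, $\partial$ is the adjacency matrix of $K_{4,4}$ (even versus odd sign vectors). This has rank $2$ over $\mathbb{Z}_2$, so $HF$ has rank $4$, not $8$. That matches the bound $2^{\lceil (d-1)/2\rceil}$ the paper quotes from Alston--Amorim, which is strictly smaller than $2^{d-1}$ once $d\ge3$. What is missing is an actual computation of the homology of this explicit differential, and that computation is the real content of the cited theorem.

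Step~(2) also fails as proposed when $d$ is odd, including $d=3$; this is not mere bookkeeping. The same formula gives $\partial^2x=\sum_{i,j}x^{(i)(j)}=d\,x=x$ mod~$2$, which is the mismatch $m_0(T)-m_0(\mathbb{R}P^{d-1})=d$ you identified. A rank-one local system on $T$ with holonomies $\rho_1,\ldots,\rho_d$, where $\prod_i\rho_i=1$, replaces $d$ by the potential $W(\rho)=\rho_1+\cdots+\rho_d$. The critical points of $W$ are $\rho_1=\cdots=\rho_d=\lambda$ with $\lambda^d=1$, and the critical value is $d\lambda=\lambda\neq0$ in characteristic~$2$.

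So any $\rho$ with $W(\rho)=0$ is noncritical. Then $HF(T_\rho,T_\rho)=0$, and consequently $HF(\mathbb{R}P^{d-1},T_\rho)=0$ too, since it is a unital module over $HF(T_\rho,T_\rho)$. Your proposed repair therefore yields either $\partial^2\neq0$ or vanishing cohomology, and odd $d$ needs a genuinely different device.
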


Indeed,~\cite{AlstonA:12} shows that, under any Hamiltonian diffeomorphism of $\mathbb{C}P^{d-1}$, the image of $\mathbb{R}P^{d-1}$ intersects the Clifford torus in at least $2^{\lceil{(d-1)/2}\rceil}$ points, provided the intersection is transverse.
This implies Proposition~\ref{prop: real flat hammer} since an empty intersection would be transverse.

\begin{example}
Consider a $2\times 2$ unitary again, as in Example~\ref{ex: Bloch flat}.
In the Bloch sphere representation of $\mathbb{C}P^1$, the set $\mathbb{R}P^1$ of projective points with at least one real representative corresponds to the great circle $y=0$, as in Figure~\ref{fig: Bloch}.
As before, applying any $2 \times 2$ unitary spins the ball, which moves the great circle $y=0$ to some other great circle.
That other great circle intersects the equator somewhere, so the unitary moves some real vector to a flat vector.
\end{example}

\begin{proof}[Proof of Theorem~\ref{thm: odd coneig existence}]
As in the proof of Theorem~\ref{thm: Benoist general}, $\mathcal{F}_\beta$ restricts to a unitary transformation $\mathcal{F}_{\beta,\text{odd}} \colon \mathbb{C}_{\text{odd}}^A \to \mathbb{C}_{\text{odd}}^A$; furthermore, $\mathcal{F}_{\beta,\text{odd}}^2 = -I$ since $\mathcal{F}_\beta^2 = D_{-1}$.
Consequently,
$U:= 
\tfrac{1}{\sqrt{2}}(I -  \mathcal{F}_{\beta,\text{odd}})
$
is a unitary transformation of $\mathbb{C}_{\text{odd}}^A$.
We will apply Proposition~\ref{prop: real flat hammer} to its matrix representation in a certain basis.

Specifically, put $n:=|A|$, and equip $\mathbb{C}_{\text{odd}}^A$ with the orthonormal basis $f_1,\ldots,f_{(n-1)/2}$ from the proof of Theorem~\ref{thm: Benoist general}.
As in that proof, any $f \in \mathbb{C}_{\text{odd}}^A$ is flat on $A \setminus \{0\}$ if and only if its coordinate vector is flat; furthermore, any $g \in \mathbb{C}_{\text{odd}}^A$ is real-valued if and only if its coordinate vector is real.
Applying Proposition~\ref{prop: real flat hammer} to the matrix representation of $U$, we obtain a real-valued $g \in \mathbb{C}_{\text{odd}}^A$ for which
\[
f:= Ug 
= \tfrac{1}{\sqrt{2}}(g - \mathcal{F}_\beta g) 
\in \mathbb{C}_{\text{odd}}^A
\]
is flat on $A \setminus \{0\}$.
Here, $f(0) = 0$ since $f$ is odd.
To prove the theorem, it therefore suffices to show $\mathcal{F}_\beta f = \overline{f}$.
Indeed, $\mathcal{F}_\beta g$ takes purely imaginary values since $g$ is odd and real-valued, so $\operatorname{Re} f = \tfrac{1}{\sqrt{2}}g$, $\operatorname{Im} f = \tfrac{\mathrm{i}}{\sqrt{2}} \mathcal{F}_\beta g$, and
\[
\mathcal{F}_\beta f
= \tfrac{1}{\sqrt{2}}(\mathcal{F}_\beta g - \mathcal{F}_\beta^2 g)
= \tfrac{1}{\sqrt{2}}(g + \mathcal{F}_\beta g)
= \overline{f}.
\qedhere
\]
\end{proof}

\begin{corollary}
For any odd $n>1$ and any $m \in \mathbb{Z}_n^\times$, there exists an odd conference signal $f$ on $\mathbb{Z}_n$ that satisfies $\hat{f} = \sqrt{n}D_m \overline{f}$.
\end{corollary}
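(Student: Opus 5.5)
We apply Theorem~\ref{thm: odd coneig existence} to $A=\mathbb{Z}_n$ with a suitable bicharacter, and then translate the conclusion into the unnormalized notation. Given $m\in\mathbb{Z}_n^\times$, take the symmetric nondegenerate bicharacter $\beta_m(s,t)=\omega_n^{mst}$ from Example~\ref{ex: Zn bichar}. Since $n$ is odd and $n>1$, Theorem~\ref{thm: odd coneig existence} provides an odd conference signal $f$ on $\mathbb{Z}_n$ with $\mathcal{F}_{\beta_m} f=\overline{f}$.

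Next, Example~\ref{ex: dilated Fourier} gives $\mathcal{F}_{\beta_m}=D_{m^{-1}}\mathcal{F}$, where $\mathcal{F}f=\tfrac{1}{\sqrt n}\hat f$. Hence
\[
D_{m^{-1}}\hat f=\sqrt{n}\,\overline{f}.
\]
Apply $D_m$ to both sides, using $D_mD_{m^{-1}}=I$ and the fact that dilations commute with complex conjugation. This yields $\hat f=\sqrt n\,D_m\overline f$, as claimed.

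The only point needing care is the identity $\mathcal{F}_{\beta_m}=D_{m^{-1}}\mathcal{F}$, which is already stated in Example~\ref{ex: dilated Fourier}. To confirm the direction of the dilation, compute
\[
(\mathcal{F}_{\beta_m}f)(s)=\tfrac{1}{\sqrt n}\sum_t f(t)\omega_n^{-mst}=(\mathcal{F}f)(ms).
\]
On the other hand, $(D_{m^{-1}}g)(s)=g(ms)$. So the direction matches, and the corollary follows immediately, with no real obstacle.
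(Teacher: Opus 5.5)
Your proposal is correct and matches the paper's argument: the paper also obtains the corollary by applying Theorem~\ref{thm: odd coneig existence} with the bicharacter $\beta_m$ and then using $\mathcal{F}_{\beta_m}=D_{m^{-1}}\mathcal{F}$ from Example~\ref{ex: dilated Fourier}. Your explicit check that $(\mathcal{F}_{\beta_m}f)(s)=(\mathcal{F}f)(ms)=(D_{m^{-1}}\mathcal{F}f)(s)$ correctly confirms the direction of the dilation.
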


Indeed, this follows immediately from Example~\ref{ex: dilated Fourier} and Theorem~\ref{thm: odd coneig existence}.

\section{Explicit construction (composite order)}
\label{sec: explicit composite}

The results from the last section are a little unsatisfying for the authors because they leave us dying to know:
\textit{what are those conference signals}?
Our urge to answer this question was made even stronger by the multitude of numerical conference signals we observed with mysterious entries that did not seem to align with any of the usual suspects for unimodular numbers (such as roots of unity or Gauss sums).
In this section, which contains the first of our main results, 
we crack into this mystery
by providing the first explicit construction of conference signals with composite order.
Specifically, we construct conference signals on $\mathbb{Z}_n$ whenever $n = 3p$ for some prime $p > 3$.
In the theorem statement below, $\mathbf{1}_3,\mathbf{1}_p \colon \mathbb{Z}_n \to \{0,1\}$ are the indicator functions of the subgroups with orders $3$ and $p$, respectively:
\[
\mathbf{1}_3(t) = \left\{ \begin{array}{ll}
1 & \text{if $t = 0 \bmod p$} \\
0 & \text{otherwise}
\end{array} \right\},
\qquad
\mathbf{1}_p(t) = \left\{ \begin{array}{ll}
1 & \text{if $t=0 \bmod 3$} \\
0 & \text{otherwise}
\end{array} \right\}
\qquad
\forall t \in \mathbb{Z}_n.
\]
We also extend the Legendre symbols on $\mathbb{Z}_3$ and $\mathbb{Z}_p$ to define functions
$\psi_3,\psi_p \colon \mathbb{Z}_n \to \mathbb{R}$ given by
\[
\psi_3(t) = \chi_3(t),
\qquad
\psi_p(t) = \chi_p(t)
\qquad
\forall
t \in \mathbb{Z}_n,
\]
where the values above depend only on $t \bmod n$; in particular, $\psi_3$ and $\psi_p$ are multiplicative.

\begin{theorem}
\label{thm: n = 3p}
Assume $n = 3p$ where $p>3$ is prime.
In the notation above, there exist $a,b,c,d\in \mathbb{C}$ for which
\begin{equation}
\label{eq: f for 3p}
f := (a+b\mathbf{1}_3)\psi_3 + (c+d\mathbf{1}_p)\psi_p
\end{equation}
is a conference signal on $\mathbb{Z}_n$ with $\hat{f} = \sqrt{n} \overline{f}$, where multiplication in~\eqref{eq: f for 3p} is entrywise.

Specifically, put $\varepsilon := \chi_3(p) \in \{ \pm 1 \}$, and consider the polynomials
\[
g(x) = (p+1)x^2-4 \varepsilon \sqrt{p} x + (p+1)
\]
and
\[h(x) = \begin{cases}
\bigl( 4-2\varepsilon\sqrt{3} \bigr)x^2 + \bigl( 4+2\varepsilon\sqrt{3} \bigr)
& \text{if }p = 1 \bmod 4, \\[3 pt]
4x^2 - 4\varepsilon \sqrt{3} x + 4
&\text{if }p = 3 \bmod 4.
\end{cases}
\]
Then there exists $x \in \mathbb{R}$ satisfying
\begin{equation}
\label{eq: degree four}
g(x)h(x) = (x^2+1)\bigl( g(x) + h(x) \bigr),
\end{equation}
and for any such~$x$ and either choice of $\delta \in \{\pm1\}$, one can take $f$ to be given by~\eqref{eq: f for 3p} with
\begin{align}
\label{eq: abcd}
a &= \delta\tfrac{-x+\mathrm{i}}{\sqrt{g(x)}},
&
b &=\varepsilon\mathrm{i} \sqrt{p}\cdot \overline{a}, \\
c &= \tfrac{1+x \mathrm{i}}{\sqrt{h(x)}},
&
d &= \begin{cases}
\varepsilon\sqrt{3} \cdot \overline{c} & \text{if }p = 1 \bmod 4, \\[3 pt]
- \varepsilon\mathrm{i}\sqrt{3} \cdot \overline{c} & \text{if }p = 3 \bmod 4,\notag
\end{cases}
\end{align}
where it holds automatically that $g(x),h(x)>0$.
\end{theorem}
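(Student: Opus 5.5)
The plan is to make $f$ live in a four-dimensional space that the Fourier transform maps to itself, so that $\hat f=\sqrt n\,\overline f$ becomes a small linear-algebra condition and flatness becomes a few scalar equations.

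First I would use the Chinese remainder theorem $\mathbb{Z}_n\cong\mathbb{Z}_3\times\mathbb{Z}_p$. Under it the four building blocks become tensor products:
\[
\psi_3=\chi_3\otimes 1,\qquad \mathbf{1}_3\psi_3=\chi_3\otimes\delta_0,\qquad \psi_p=1\otimes\chi_p,\qquad \mathbf{1}_p\psi_p=\delta_0\otimes\chi_p .
\]
Writing $t\leftrightarrow(t_3,t_p)$, the character $\omega_n^{st}$ factors as $\omega_3^{p s_3t_3}\,\omega_p^{3s_pt_p}$. So the DFT on $\mathbb{Z}_n$ is a tensor product of dilated DFTs on $\mathbb{Z}_3$ and $\mathbb{Z}_p$.

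Applying Gauss's formula~\eqref{eq: Legendre Fourier} and the identity $\widehat{D_m\chi}=\chi(m)\,\widehat{\chi}$ for multiplicative $\chi$, I get the following transforms:
\begin{itemize}
\item $\widehat{\chi_3\otimes 1}=-\mathrm{i}\sqrt3\,\chi_3(p)\,p\,(\chi_3\otimes\delta_0)$;
\item $\widehat{\chi_3\otimes\delta_0}=-\mathrm{i}\sqrt3\,\chi_3(p)\,(\chi_3\otimes 1)$;
\item $\widehat{1\otimes\chi_p}$ and $\widehat{\delta_0\otimes\chi_p}$ are the analogous expressions, with Gauss constant $\sqrt p$ or $-\mathrm{i}\sqrt p$ and the factor $\chi_p(3)$.
\end{itemize}
By quadratic reciprocity, $\chi_p(3)$ equals $\varepsilon$ when $p\equiv1\bmod4$ and $-\varepsilon$ when $p\equiv3\bmod4$. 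Hence the span of the four functions is DFT-invariant. The condition $\hat f=\sqrt n\,\overline f$ splits into two conjugate-linear $2\times2$ conditions, one on $(a,b)$ and one on $(c,d)$. I expect these to be consistent, since the relevant conjugate-linear map squares to the identity, and to reduce exactly to the stated formulas $b=\varepsilon\mathrm{i}\sqrt p\,\overline a$ and $d=\varepsilon\sqrt3\,\overline c$ or $-\varepsilon\mathrm{i}\sqrt3\,\overline c$. This is routine bookkeeping.

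Next comes flatness. Once $\hat f=\sqrt n\,\overline f$ holds, conditions (iii) and (iv) follow from (i) and (ii), so only the time domain matters. The value $f(0)=0$ is automatic. The other values of $f$ fall into three cases:
\begin{itemize}
\item for $t$ a unit, $f(t)=a\chi_3(t)+c\chi_p(t)$, which ranges over $\pm a\pm c$;
\item for $t\equiv0\bmod p$ with $t\ne0$, $f(t)=\pm(a+b)$;
\item for $t\equiv0\bmod3$ with $t\ne0$, $f(t)=\pm(c+d)$.
\end{itemize}
So I need $|a+b|=|c+d|=1$, $|a|^2+|c|^2=1$ and $\operatorname{Re}(a\overline c)=0$.

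The ansatz $a\propto -x+\mathrm{i}$, $c\propto 1+x\mathrm{i}$ gives $a\overline c\propto \mathrm{i}(x^2+1)$, which is purely imaginary. Expanding $|a+\varepsilon\mathrm{i}\sqrt p\,\overline a|^2$ for $a=\delta(-x+\mathrm{i})/\sqrt{g}$ should give exactly $g(x)/g(x)=1$; the definition of $g$ is chosen precisely for this. The same holds for $|c+d|^2$ with $h$, in each case of $p\bmod4$. With these normalizations, $|a|^2+|c|^2=1$ reads $(x^2+1)/g+(x^2+1)/h=1$, which is~\eqref{eq: degree four}.

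Positivity of $g$ and $h$ comes from their coefficients. For $g$, the discriminant satisfies $16p<4(p+1)^2$. For $h$, the discriminant $48-64$ is negative when $p\equiv3\bmod4$, and both coefficients $4\pm2\sqrt3$ are positive when $p\equiv1\bmod4$.

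The main obstacle is existence of a real root of~\eqref{eq: degree four}. I would apply the intermediate value theorem to $S(x):=(x^2+1)/g(x)+(x^2+1)/h(x)$. At $x=0$ I expect $S<1$; for instance, if $p\equiv3\bmod4$ then $S(0)=1/(p+1)+1/4$. Then I would find a point with $S\ge1$:
\begin{itemize}
\item If $p\equiv3\bmod4$, take $x=\varepsilon$. Then $h(\varepsilon)=8-4\sqrt3$, so $2/h(\varepsilon)>1$.
\item If $p\equiv1\bmod4$ and $\varepsilon=1$, let $x\to\infty$. Then $S$ tends to $1/(p+1)+1/(4-2\sqrt3)>1$.
\item If $p\equiv1\bmod4$ and $\varepsilon=-1$, then $x=0$ already gives $1/h(0)=1/(4-2\sqrt3)>1$. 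In this case I would instead compare with $x\to\infty$, where $S\to1/(p+1)+1/(4+2\sqrt3)<1$.
\end{itemize}
In every case $S$ takes values on both sides of $1$, so continuity gives a real root. The sign $\delta$ plays no role, since it multiplies $a$ and $b$ by a common real sign.
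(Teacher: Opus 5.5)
Your proposal is correct. Its first half, the Fourier bookkeeping and the reduction of flatness to $|a+b|=|c+d|=1$, $|a|^2+|c|^2=1$ and $\operatorname{Re}(a\overline{c})=0$, matches Parts~1--2 of the paper. The only difference there is that you phrase it through the CRT tensor factorization, while the paper splits the sums over $t=r+3s$ directly. There is one harmless slip: under the standard CRT identification the $p$-factor of the character is $\omega_p^{3^{-1}s_pt_p}$, with $3^{-1}$ taken mod $p$, not $\omega_p^{3s_pt_p}$. Nothing changes, since $\chi_p(3^{-1})=\chi_p(3)$.

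You genuinely diverge in the existence step. The paper works in an angle variable $\theta$:
\begin{itemize}
\item[(1)] It reads the constraints as $a\in E_a$, $c\in E_c$ (two ellipses), $a\perp c$ and $|a|^2+|c|^2=1$.
\item[(2)] It gets $F(\theta)>1$ when $c$ points along the major semiaxis of $E_c$, whose length is $1/(\sqrt3-1)>1$.
\item[(3)] It gets $F(\theta)<1$ somewhere because the mean of $F$ is $\tfrac12+\tfrac1{p-1}$, the sum of the ellipse areas divided by $\pi$.
\item[(4)] It checks $F(\pm\tfrac\pi2)\neq1$ before substituting $x=\tan\theta$ to land on the quartic.
\end{itemize}

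You instead take the $x$-ansatz from the statement at face value. You verify $|a+b|^2=g/g$, $|c+d|^2=h/h$ and $a\overline{c}\in\mathrm{i}\mathbb{R}$ by direct expansion, and then apply the intermediate value theorem to $S(x)$ at explicit test points. Your points with $S>1$ are $x=\varepsilon$, $x\to\infty$ and $x=0$ in the three cases. These are exactly the major-axis directions of $E_c$, so that half is the paper's argument in a different coordinate. The half with $S<1$ replaces the paper's averaging trick with an explicit evaluation.

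Your route is shorter and entirely elementary. It stays in the variable $x$ throughout, so the endpoint check at $\theta=\pm\tfrac\pi2$ disappears. It does need a small case split on $p \bmod 4$ and on $\varepsilon$. The paper's route is uniform across those cases and explains where the ansatz comes from: orthogonal directions $\theta$ and $\theta+\tfrac\pi2$ on two ellipses, with $x=\tan\theta$. It also shows why the construction fails for $q>3$, where the major semiaxis $1/(\sqrt q-1)$ drops below $1$.
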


One can use Ferrari's method to solve the degree-four polynomial equation~\eqref{eq: degree four} explicitly in terms of radicals, but the result is not particularly illuminating.

\begin{remark}
If $p = 3 \bmod 4$, then $\chi_p$ is odd and so $f$ in~\eqref{eq: f for 3p} is odd.
However, if $p = 1 \bmod 4$, then $\chi_p$ is even and one can show that the conference signal $f$ above is neither even nor odd, and furthermore, $f^* \notin\{f,-f\}$.
Consequently, the complex conference matrices in Theorem~\ref{thm: conference matrices}(iii) are neither symmetric, skew-symmetric, Hermitian, nor skew-Hermitian.
Another complex conference matrix with this property can be obtained by taking $b=c=\mathrm{i}$ in equation~(22) of~\cite{Dita:09}.
By contrast, every real normalized conference matrix is either symmetric or skew-symmetric~\cite{DelsarteGS:71}.
\end{remark}

\begin{remark}
By Corollary~\ref{cor: EITFF}, the construction above leads to an explicit $\operatorname{EITFF}(3p-1,2,3p)$ for every prime $p > 3$ with $p = 3 \bmod 4$.
To the authors' knowledge, these are all new EITFF parameters.
\end{remark}

\begin{proof}[Proof of Theorem~\ref{thm: n = 3p}]
The argument is long, so we organize it into three parts.

\textbf{Part 1.}
First, given arbitrary $a,b,c,d\in \mathbb{C}$, we compute the Fourier transform of each term in
\[
f = a \psi_3 + b \mathbf{1}_3 \psi_3 + c \psi_p + d \mathbf{1}_p \psi_p
\]
to find conditions under which $\hat{f} = \sqrt{n} \overline{f}$.
For any $t \in \mathbb{Z}_n$,
\[
\widehat{\psi}_3(t)
= \sum_{r=0}^2 \sum_{s=0}^{p-1} \psi_3(r+3s) \omega_n^{-(r+3s)t}
= \sum_{r=0}^2 \chi_3(r) \omega_n^{-rt} \sum_{s=0}^{p-1}   \omega_p^{-st}.
\]
When $t \neq 0 \bmod p$, this vanishes; when instead $t = pk$ for some $k\in\{0,1,2\}$, it equals
\[
\widehat{\psi}_3(pk) = p \sum_{r=0}^2 \chi_3(r) \omega_n^{-rpk}
= p \sum_{r=0}^2 \chi_3(r) \omega_3^{-rk}
= p \widehat{\chi}_3(k)
= - \mathrm{i} p \sqrt{3} \chi_3(k),
\]
where the last equality applies~\eqref{eq: Legendre Fourier}.
Since $\psi_3(pk) = \psi_3(p) \psi_3(k) = \varepsilon \chi_3(k)$, the above says
\[
\widehat{\psi}_3(pk) = - \varepsilon \mathrm{i} p \sqrt{3} \psi_3(pk).
\]
Combining the cases, we have
\begin{equation}
\label{eq: psi3 hat}
\widehat{\psi}_3 = - \varepsilon \mathrm{i} p \sqrt{3} \mathbf{1}_3 \psi_3.
\end{equation}
Taking the Fourier transform of both sides gives
\[
n \psi_3(-t) = - \varepsilon \mathrm{i} p \sqrt{3} \widehat{\mathbf{1}_3 \psi_3}(t)
\qquad
\forall t \in \mathbb{Z}_n,
\]
and since $\psi_3$ is odd,
\begin{equation}
\label{eq: 13 psi3 hat}
\widehat{\mathbf{1}_3 \psi_3} = - \varepsilon \mathrm{i} \sqrt{3} \psi_3.
\end{equation}

For the next term, put
\[
\lambda:= \begin{cases}
1 & \text{if } p = 1 \bmod 4, \\
-\mathrm{i} & \text{if } p = 3 \bmod 4,
\end{cases}
\]
so that~\eqref{eq: Legendre Fourier} reads $\widehat{\chi}_p = \lambda \sqrt{p} \chi_p$.
Then calculations like those above show $\widehat{\psi}_p(t) = 0$ whenever $t \neq 0 \bmod 3$ and $\widehat{\psi}_p(3k) = 3 \lambda \sqrt{p} \chi_p(k)$ for each $k \in \{0,\ldots,p-1\}$.
To express the right-hand side in terms of $\psi_p(3k)$, observe that $\psi_p(3k) = \chi_p(3) \chi_p(k)$, while quadratic reciprocity gives
\[
\chi_p(3) \chi_3(p)
= \left\{ \begin{array}{rl}
1 & \text{if } p = 1 \bmod 4 \\
-1 & \text{if } p = 3 \bmod 4
\end{array} \right\}
= \lambda^2
\]
so that $\chi_p(3) = \varepsilon \lambda^2$.
Consequently, $\widehat{\psi}_p(3k) = 3 \varepsilon \overline{\lambda} \sqrt{p} \psi_p(3k)$ and
\begin{equation}
\label{eq: psip hat}
\widehat{\psi}_p = 3 \varepsilon \overline{\lambda} \sqrt{p} \mathbf{1}_p \psi_p.
\end{equation}
Taking the Fourier transform of both sides gives
\[
n \psi_p(-t) = 3 \varepsilon \overline{\lambda} \sqrt{p} \widehat{ \mathbf{1}_p \psi_p }(t)
\qquad
\forall t \in \mathbb{Z}_n,
\]
and since $\psi_p(-t) = \chi_p(-1) \psi_p(t) = \overline{\lambda}^2 \psi_p(t)$,
\begin{equation}
\label{eq: 1p psip hat}
\widehat{ \mathbf{1}_p \psi_p }
= \varepsilon \overline{\lambda} \sqrt{p} \psi_p.
\end{equation}
Now
\eqref{eq: psi3 hat}, 
\eqref{eq: 13 psi3 hat},
\eqref{eq: psip hat}, and
\eqref{eq: 1p psip hat}
combine to show
\begin{align*}
\hat{f} &= 
a\bigl(-\varepsilon \mathrm{i} p \sqrt{3}\bigr) \mathbf{1}_3 \psi_3
+ b\bigl(-\varepsilon \mathrm{i} \sqrt{3}\bigr) \psi_3
+ c\bigl(3 \varepsilon \overline{\lambda} \sqrt{p} \bigr) \mathbf{1}_p \psi_p
+ d \bigl( \varepsilon \overline{\lambda} \sqrt{p} \bigr) \psi_p \\[2 pt]
&= \sqrt{n} \bigl( \tfrac{-b\varepsilon \mathrm{i}}{\sqrt{p}} - a\varepsilon \mathrm{i}  \sqrt{p} \mathbf{1}_3 \bigr) \psi_3
+ \sqrt{n} \bigl( \tfrac{d \varepsilon \overline{\lambda}}{\sqrt{3}} + c \varepsilon \overline{\lambda} \sqrt{3} \mathbf{1}_p \bigr) \psi_p.
\end{align*}
Comparing with~\eqref{eq: f for 3p}, we find $\hat{f} = \sqrt{n} \overline{f}$ whenever
\begin{equation}
\label{eq: b and d}
b = \varepsilon \mathrm{i} \sqrt{p} \cdot \overline{a},
\qquad
d = \varepsilon \lambda \sqrt{3} \cdot \overline{c}.
\end{equation}

\textbf{Part 2.}
Assume $b$ and $d$ are as in~\eqref{eq: b and d}.
We show there exist $a,c\in\mathbb{C}$ for which $f$ in~\eqref{eq: f for 3p} is unimodular on $\mathbb{Z}_n \setminus \{0\}$; any such $f$ is a conference signal since $\hat{f} = \sqrt{n} \overline{f}$.
First observe that $\mathbf{1}_3(t) = 1$ if and only if $\psi_p(t) = 0$, and likewise $\mathbf{1}_p(t) = 1$ if and only if $\psi_3(t) = 0$; hence the value of $f(t)$ is completely determined by $(\psi_3(t),\psi_p(t))$.
By iterating through all nine possible values of $(\psi_3(t),\psi_p(t))$, and by observing that $\psi_3(t)=\psi_p(t) = 0$ if and only if $t =0$, we find $f(t)$ takes the following values on $\mathbb{Z}_n \setminus \{0\}$:
\[
\pm(a+b),
\,
\pm(c+d),
\,
\pm(a+c),
\,
\pm(a-c).
\]
Substituting $b$ and $d$ from~\eqref{eq: b and d}, we see that $f$ is unimodular on $\mathbb{Z}_n \setminus \{0\}$ if and only if
\[
\bigl| a+\varepsilon \mathrm{i} \sqrt{p} \cdot\overline{a} \bigr|^2
= \bigl|c+\varepsilon \lambda \sqrt{3} \cdot \overline{c} \bigr|^2
= |a+c|^2
= |a-c|^2
= 1.
\]

\begin{figure}[!bt]
\centering

\definecolor{ccol}{HTML}{2455C7}
\definecolor{acol}{HTML}{D62728}
\definecolor{plotcol}{RGB}{0,0,0}

\tikzset{
  vec/.style={-{Stealth[length=3.5pt]},line width=0.9pt},
  ellipsea/.style={acol,line width=1pt},
  ellipsec/.style={ccol,line width=1pt},
  complexaxis/.style={
    -{Stealth[length=3pt]},black!48,line width=0.4pt
  },
  every node/.style={font=\small}
}

\makebox[\linewidth][c]{%
\begin{tikzpicture}[
  x=1.50cm,
  y=1.50cm,
  baseline=(current bounding box.center)
]
  \draw[complexaxis] (-1.53,0)--(1.65,0)
    node[
      anchor=west,
      font=\footnotesize,
      text=black,
      inner sep=2pt
    ] {$\operatorname{Re}$};
  \draw[complexaxis] (0,-1.08)--(0,1.22)
    node[
      anchor=north west,
      font=\footnotesize,
      text=black,
      inner sep=2pt
    ] {$\operatorname{Im}$};
  \def\figuretheta{18}
  \pgfmathsetmacro{\radiusc}{
    1/sqrt(4-2*sqrt(3)*cos(2*\figuretheta))
  }
  \pgfmathsetmacro{\radiusa}{
    1/sqrt(6+2*sqrt(5)*sin(2*\figuretheta))
  }
  \coordinate (c) at
    ({\radiusc*cos(\figuretheta)},{\radiusc*sin(\figuretheta)});
  \coordinate (a) at
    ({-\radiusa*sin(\figuretheta)},{\radiusa*cos(\figuretheta)});
  \draw[ellipsec] (0,0)
    ellipse[x radius=1.366025404,y radius=0.366025404];
  \draw[ellipsea,rotate=-45] (0,0)
    ellipse[x radius=0.309016994,y radius=0.809016994];
  \draw[vec,ccol] (0,0)--(c);
  \draw[vec,acol] (0,0)--(a);
  \draw[black!60,line width=0.5pt]
    ({0.085*cos(\figuretheta)},{0.085*sin(\figuretheta)})--
    ({0.085*(cos(\figuretheta)-sin(\figuretheta))},
     {0.085*(sin(\figuretheta)+cos(\figuretheta))})--
    ({-0.085*sin(\figuretheta)},{0.085*cos(\figuretheta)});
  \node[ccol,anchor=west,inner sep=0.5pt]
    at (0.97,0.34) {$c$};
  \node[acol,inner sep=0pt]
    at (-0.300,0.270) {$a$};
  \draw[black!60,line width=0.5pt] (0.70,0)
    arc[start angle=0,end angle=\figuretheta,radius=0.70];
  \node[font=\footnotesize,inner sep=0.5pt]
    at (0.900,0.14) {$\theta$};
  \node[ccol,anchor=west,font=\footnotesize,inner sep=1pt]
    at (-1.18,-0.38) {$E_c$};
  \node[acol,anchor=west,font=\footnotesize,inner sep=1pt]
    at (0.58,0.61) {$E_a$};
\end{tikzpicture}%
\hspace{1cm}%
\begin{tikzpicture}[baseline=(current bounding box.center)]
\begin{axis}[
  width=2.75in,
  height=1.67in,
  axis lines=left,
  axis line style={black!55},
  tick style={black!55},
  xmin=0,
  xmax=2*pi,
  ymin=0,
  ymax=2.20,
  xtick={0,pi/2,pi,3*pi/2},
  xticklabels={,$\frac\pi2$,$\pi$,$\frac{3\pi}2$},
  ytick={0,1,2},
  yticklabels={,$1$,$2$},
  tick label style={font=\footnotesize,text=black},
  xlabel={$\theta$},
  label style={font=\small,text=black},
  xlabel style={
    at={(axis description cs:1.025,0)},
    anchor=west,
    text=black,
    inner xsep=0pt,
    xshift=.3333em
  },
  clip=false
]
  \addplot[black!60,dashed,domain=0:2*pi] {1};
  \addplot[
    plotcol,
    line width=1.05pt,
    domain=0:2*pi,
    samples=1441
  ] {
    1/(6+2*sqrt(5)*sin(deg(2*x)))
    +1/(4-2*sqrt(3)*cos(deg(2*x)))
  };
  \addplot[
    plotcol,
    only marks,
    mark=*,
    mark size=2.8pt,
    mark options={fill=black,draw=white,line width=.55pt}
  ] coordinates {
    (0.2979991607327084,1)
    (2.7509951554122467,1)
    (3.4395918143225015,1)
    (5.8925878090020403,1)
  };
  \node[font=\small,anchor=south,text=black]
    at (rel axis cs:0,0.975) {$F(\theta)$};
  \node[
    font=\footnotesize,
    anchor=north,
    text=black,
    yshift=-2pt
  ] at (axis cs:2*pi,0) {$2\pi$};
\end{axis}
\end{tikzpicture}%
}\par

\caption{
{\scriptsize\textbf{Left:}}
When $b$ and $d$ satisfy~\eqref{eq: b and d}, $f$ is a conference signal if and only if $a$ and $c$ are orthogonal, lie on the ellipses $E_a$ and $E_c$, and satisfy $|a|^2+|c|^2=1$.
{\scriptsize\textbf{Right:}}
For $\theta \in \mathbb{R}$, let $a$ and $c$ be as depicted on the left, and put $F(\theta)=|a|^2+|c|^2$.
Then $F(\theta)>|c|^2>1$ when $c$ points along the major axis, and $F(\theta)<1$ somewhere since its average value on $[0,2\pi)$ is the sum of the ellipse areas divided by $\pi$, which is less than~$1$.
By the intermediate value theorem, there exists $\theta$ with $F(\theta)=1$.
}
\label{fig: geometry}
\end{figure}

These constraints have a beautiful geometric interpretation.
Namely, the first two constraints say $a$ and $c$ lie on the ellipses
\[
E_a := \bigl\{ z \in \mathbb{C} : |z + \varepsilon \mathrm{i} \sqrt{p} \cdot \overline{z}|^2 = 1 \bigr\},
\qquad
E_c := \bigl\{ z \in \mathbb{C} : |z + \varepsilon \lambda \sqrt{3} \cdot \overline{z}|^2 = 1 \bigr\},
\]
while the last two constraints say $a$ and $c$ are orthogonal with $|a|^2+|c|^2=1$, as seen by adding and subtracting the equations
\begin{align*}
|a+c|^2 &= |a|^2 + 2 \operatorname{Re} (a \overline{c}) + |c|^2, \\[3 pt]
|a-c|^2 &= |a|^2 - 2 \operatorname{Re} (a \overline{c}) + |c|^2.
\end{align*}
See Figure~\ref{fig: geometry}.
To show such $a,c$ exist, let
\[
z_a(\theta) = \bigl| z_a(\theta) \bigr| e^{\theta \mathrm{i}},
\qquad
z_c(\theta) = \bigl| z_c(\theta) \bigr| e^{\theta \mathrm{i}}
\]
give polar parameterizations of $E_a$ and $E_c$ for $\theta \in [0,2\pi)$; here, $|z_a|$ and $|z_c|$ are $\pi$-periodic functions on $\mathbb{R}$.
For some $\theta$ and $\delta \in \{\pm 1\}$, we will take $c = z_c(\theta)$ and $a = \delta z_a(\theta+\tfrac{\pi}{2})$, so that $|a|^2+|c|^2$ is given by
\[
F(\theta):= \bigl| z_a(\theta+\tfrac{\pi}{2}) \bigr|^2 + \bigl| z_c(\theta) \bigr|^2,
\qquad
\theta \in \mathbb{R}.
\]
Then $f$ is a conference signal if and only if $F(\theta)=1$.

We will use the intermediate value theorem to show such $\theta$ exists.
First, we determine the geometry of $E_a$ and $E_c$.
To treat them simultaneously, pick $r>1$ and $\varphi \in \mathbb{R}$, and consider
\[
E:=\bigl\{ z \in \mathbb{C} : | z + e^{\varphi \mathrm{i}} r \overline{z} |^2 = 1 \bigr\}.
\]
The rotation $w = e^{-\varphi \mathrm{i}/2} z$ satisfies
\[
w+r\overline{w} = e^{-\varphi \mathrm{i}/2}\bigl(z+e^{\varphi \mathrm{i}} r \overline{z}\bigr),
\]
so $E$ is a rotation of $E':=\bigl\{ w \in \mathbb{C} : |w+r\overline{w}|^2 = 1 \bigr\}$.
For $w=u+v \mathrm{i}$ with $u,v \in \mathbb{R}$, we have
\begin{equation}
\label{eq: w abs}
|w + r \overline{w} |^2= (1+r)^2u^2 + (1-r)^2v^2,
\end{equation}
so $E'$ and $E$ have semiaxis lengths $1/(r-1)$ and $1/(r+1)$, for area $\pi/(r^2-1)$.
In particular, $E_c$ has major semiaxis length $1/(\sqrt{3}-1)>1$, and when $z_c(\theta)$ points along that major semiaxis we have $F(\theta) > |z_c(\theta)|^2 > 1$.
To see that $F(\theta)< 1$ somewhere, apply the polar area formula to find the average value of $F$ on $[0,2\pi)$ is the sum of the areas of $E_a$ and~$E_c$, divided by $\pi$:
\[
\tfrac{1}{2\pi} \int_0^{2\pi} F(\theta) \, d \theta
= \tfrac{1}{\pi} \int_0^{2\pi} \tfrac{1}{2} \bigl| z_a(\theta+\tfrac{\pi}{2}) \bigr|^2\, d\theta 
+ \tfrac{1}{\pi} \int_0^{2\pi} \tfrac{1}{2} \bigl| z_c(\theta) \bigr|^2\, d\theta 
= \frac{1}{2} + \frac{1}{p-1}
< 1.
\]
Consequently, $F(\theta)<1$ for some $\theta$; by the intermediate value theorem, there exists $\theta$ with $F(\theta)=1$.

\smallskip

\textbf{Part 3.}
Finally, we show the change of variables $x = \tan \theta$ turns the equation $F(\theta)=1$ into~\eqref{eq: degree four}.
To begin, we compute $z_a(\theta)$ and $z_c(\theta)$ explicitly.
Considering~\eqref{eq: w abs}, $E'$ has polar parameterization
\[
w(\theta)
= \frac{ \cos \theta + \mathrm{i} \sin \theta}{\sqrt{ (1+r)^2 \cos^2 \theta + (1-r)^2 \sin^2 \theta}}
= \frac{e^{\theta \mathrm{i}}}{\sqrt{1+r^2+2r\cos 2 \theta}}
\]
for $\theta\in [0,2\pi)$.
By the change of variables $z=e^{\varphi \mathrm{i}/2} w$, $E$ has polar parameterization
\[
z(\theta) = e^{\varphi \mathrm{i}/2} w(\theta-\tfrac{\varphi}{2})
= \frac{ e^{\theta \mathrm{i}} }{\sqrt{1+r^2+2r\cos(2\theta - \varphi)}}
\]
for $\theta \in [0,2\pi)$.
To interpret this for $E_a$, take $r=\sqrt{p}$ and $\varphi = \tfrac{\varepsilon \pi}{2}$.
Since $\cos(2\theta-\tfrac{\varepsilon \pi}{2}) = \varepsilon \sin(2\theta)$,
\[
z_a(\theta) = \frac{e^{\theta \mathrm{i}}}{\sqrt{1+p+2\varepsilon\sqrt{p}\sin(2\theta)}}
\]
and
\[
z_a(\theta+\tfrac{\pi}{2})
= \frac{\mathrm{i}e^{\theta \mathrm{i}}}{\sqrt{1+p-2\varepsilon\sqrt{p}\sin(2\theta)}}.
\]
For $E_c$, we take $r = \sqrt{3}$ and choose $\varphi$ depending on $p$ to ensure $e^{\varphi \mathrm{i}}=\varepsilon \lambda$.
Namely, when $p=1 \bmod 4$ we take $\varphi = \tfrac{(1-\varepsilon)\pi}{2}$, so that $\cos(2\theta - \varphi) = \varepsilon \cos(2\theta)$; when $p=3 \bmod 4$ we take $\varphi = -\tfrac{\varepsilon \pi}{2}$, so that $\cos(2\theta - \varphi) = - \varepsilon \sin(2\theta)$.
Overall,
\[
z_c(\theta)
= \begin{cases}
\frac{e^{\theta \mathrm{i}}}{\sqrt{4+2\varepsilon\sqrt{3}\cos(2\theta)}} & \text{if $p = 1 \bmod 4$}, \\[8 pt]
\frac{e^{\theta \mathrm{i}}}{\sqrt{4-2\varepsilon\sqrt{3}\sin(2\theta)}} & \text{if $p = 3 \bmod 4$}.
\end{cases}
\]
Therefore,
\[
F(\theta)
= \bigl| z_a(\theta+\tfrac{\pi}{2}) \bigr|^2 + \bigl| z_c(\theta) \bigr|^2
= \frac{1}{1+p-2\varepsilon\sqrt{p}\sin(2\theta)}
+ \begin{cases}
\frac{1}{4+2\varepsilon\sqrt{3}\cos(2\theta)} & \text{if $p = 1 \bmod 4$}, \\[8 pt]
\frac{1}{4-2\varepsilon\sqrt{3}\sin(2\theta)} & \text{if $p = 3 \bmod 4$}.
\end{cases}
\]
Since $F$ is $\pi$-periodic, some solution of the equation $F(\theta)=1$ lies in the interval $[-\tfrac{\pi}{2},\tfrac{\pi}{2}]$; furthermore, it does not occur at either endpoint since
\[
F(\pm \tfrac{\pi}{2})
= \tfrac{1}{1+p}+\begin{cases}
\tfrac{1}{4-2\varepsilon\sqrt{3}} & \text{if $p = 1 \bmod 4$}, \\[8 pt]
\tfrac{1}{4} & \text{if $p = 3 \bmod 4$},
\end{cases}
\]
so that $F(\pm \tfrac{\pi}{2})$ is irrational when $p=1\bmod 4$ and $F(\pm \tfrac{\pi}{2})<1$ when $p=3\bmod 4$.
Thus, the equation $F(\theta)=1$ has a solution in $(-\tfrac{\pi}{2},\tfrac{\pi}{2})$.
Restrict $\theta$ to that interval and put $x = \tan \theta$, so that $\theta = \arctan x$ and
\[
\sin \theta = \frac{x}{\sqrt{1+x^2}},
\quad\,
\cos \theta = \frac{1}{\sqrt{1+x^2}},
\quad\,
\sin(2\theta)=\frac{2x}{1+x^2},
\quad\,
\cos(2\theta)=\frac{1-x^2}{1+x^2},
\quad\,
e^{\theta \mathrm{i}} = \frac{1+x\mathrm{i}}{\sqrt{1+x^2}}.
\]
Then
\[
z_a(\theta+\tfrac{\pi}{2})
= \frac{-x+\mathrm{i}}{\sqrt{1+x^2}}\cdot\frac{1}{\sqrt{1+p-2\varepsilon \sqrt{p} \cdot \tfrac{2x}{1+x^2}}}
= \frac{-x+\mathrm{i}}{\sqrt{(1+x^2)(1+p)-4\varepsilon \sqrt{p} x}}
= \frac{-x+\mathrm{i}}{\sqrt{g(x)}}
\]
and similarly
\[
z_c(\theta) = \left\{ \begin{array}{ll}
\tfrac{1+x\mathrm{i}}{\sqrt{4(1+x^2)+2\varepsilon\sqrt{3}(1-x^2)}} & \text{if $p=1\bmod 4$} \\[8 pt]
\tfrac{1+x\mathrm{i}}{\sqrt{4(1+x^2)-4 \varepsilon \sqrt{3}x}} & \text{if $p=3\bmod 4$}
\end{array} \right\}
= \frac{1+x\mathrm{i}}{\sqrt{h(x)}},
\]
so that
\[
F(\theta) = \bigl| z_a(\theta+\tfrac{\pi}{2}) \bigr|^2 + \bigl| z_c(\theta) \bigr|^2
= (1+x^2)\bigl( \tfrac{1}{g(x)}+\tfrac{1}{h(x)}\bigr)
= \frac{(x^2+1)\bigl(g(x)+h(x)\bigr)}{g(x)h(x)}
\]
Consequently, $F(\theta)=1$ if and only if~\eqref{eq: degree four} holds, and for any such $x$, $f$ is a conference signal that satisfies $\hat{f}=\sqrt{n} \overline{f}$ when $a,b,c,d$ are given by~\eqref{eq: abcd}.
\end{proof}

\begin{remark}
While one might hope for a generalization of Theorem~\ref{thm: n = 3p} that constructs conference signals of order $n=pq$ for some other primes $p,q$, this sadly does not seem to work.
In the proof above, we used the fact that $q=3$ when we showed that $F(\theta)>1$; for any prime $q>3$ the ellipse $E_c$ would instead have major semiaxis length $\tfrac{1}{\sqrt{q}-1}<1$.
Furthermore, the authors asked ChatGPT~6 Astra Pro about such a generalization, and it reported that $F(\theta)<1$ for all $\theta$ whenever $q>3$.
\end{remark}

\section{Explicit construction (prime order)}
\label{sec: explicit prime}

In this section, which contains the second of our main results, we give another explicit infinite family of conference signals, now with prime order~$p$.
Our construction works whenever $p=1\bmod 6$ and $p \geq 13$.
None of these conference signals are multiples of multiplicative characters.
To the authors' knowledge, these are the first explicit counterexamples for Cohn's problem over prime fields.

Throughout this section, we use \textbf{Gauss and Jacobi sums}~\cite{BerndtEW:98} extensively, with the following notation: for any prime $p$ and any multiplicative characters $\psi,\phi \in \mathbb{C}^{\mathbb{Z}_p}$, we define
\begin{equation}
\label{eq: Gauss and Jacobi sums}
G(\psi):=\widehat{\psi}(1)
=\sum_{t\in\mathbb{Z}_p} \psi(t) \omega_p^{-t},
\qquad
J(\psi,\phi):=(\psi*\phi)(1)
=\sum_{t\in\mathbb{Z}_p}\psi(t)\phi(1-t).
\end{equation}
Famously, $|G(\psi)|=\sqrt{p}$ and $\widehat{\psi}=G(\psi)\overline{\psi}$ when $\psi$ is nontrivial.
When all three of $\psi,\phi,\psi\phi$ are nontrivial, this implies
\begin{equation}
\label{eq: Jacobi Gauss sum fraction}
J(\psi,\phi)= \frac{G(\psi) G(\phi)}{G(\psi \phi)}
\end{equation}
since
\[
\widehat{\psi * \phi}
= G(\psi)G(\phi)\overline{\psi \phi}
= \frac{G(\psi)G(\phi)}{G(\psi\phi)}\widehat{\psi\phi}.
\]
Note that our definition of $G(\psi)$ differs from the one in~\cite{BerndtEW:98} by a factor of $\psi(-1)\in \{ \pm 1\}$, which cancels in~\eqref{eq: Jacobi Gauss sum fraction}.

In the theorem below, we express members of $\mathbb{C}^{\mathbb{Z}_3}$ as sequences like $\{x_j\}_{j\in \mathbb{Z}_3}$ instead of functions like $j \mapsto x(j)$ to cut down on parentheses.

\begin{theorem}
\label{thm: prime construction}
Let $p \geq 13$ be a prime with $p = 1 \bmod 6$.
Fix a primitive element (multiplicative generator) $g \in \mathbb{Z}_p^\times$, and let $\varphi$ be the unique multiplicative character on $\mathbb{Z}_p$ with $\varphi(g)=\omega_{p-1}$.
Next, choose any integer $r$ with $1 \leq r < m:=\tfrac{p-1}{3}$, $r \neq \tfrac{m}{2}$.
To produce a conference signal $f$ on $\mathbb{Z}_p$ that satisfies $D_{g^3} f = \omega_m^{-r} f$, first define $z \in \mathbb{C}^{\mathbb{Z}_3}$ by
\begin{equation}
\label{eq: zj}
z_j = \frac{G(\varphi^{r+(j-1)m})}{G(\varphi^{r+jm})}
=\frac{J(\varphi^{r+(j-1)m},\varphi^m)}{G(\varphi^m)}
\qquad
\forall j\in \mathbb{Z}_3.
\end{equation}
Then there is exactly one choice of $\varepsilon \in \{\pm 1\}$ for which
\[
y_j:=\varepsilon(1+z_j)
\operatorname{Im}\!\left((1+z_{j-1})\overline{(1+z_{j+1})}\right)
\qquad
\forall j\in \mathbb{Z}_3
\]
satisfy $y_0y_1y_2>0$.
For this choice of $\varepsilon$, define $x\in\mathbb C^{\mathbb Z_3}$ by
\[
\widehat{x}_0=\left|\frac{y_0 y_1}{y_2}\right|^{1/2},
\quad
\widehat{x}_1=\left|\frac{y_2}{y_0 y_1}\right|^{1/2} y_1,
\quad
\widehat{x}_2=\left|\frac{y_2}{y_0 y_1}\right|^{1/2} \overline{y_0}.
\]
Then the function $f:\mathbb Z_p\to\mathbb C$ given by
\[
f(0)=0,
\qquad
f(g^{j+3\ell})=x_j\omega_{p-1}^{r(j+3\ell)}
\qquad
\forall j \in \{0,1,2\},\enspace \forall \ell \in \{0,\ldots,m-1\}
\]
is a multiple of a conference signal.
Furthermore, $f$ is not a multiple of a multiplicative character, and it satisfies
\[
\widehat f=G(\varphi^r)\overline{f},
\qquad
D_{g^3}f=\omega_m^{-r}f,
\qquad
f(-t)=(-1)^r f(t)
\qquad
\forall
t \in \mathbb{Z}_p,
\]
so that $f$ is odd when $r$ is odd and even when $r$ is even.
\end{theorem}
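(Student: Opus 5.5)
The plan is to recognize the set of functions with $D_{g^3}f=\omega_m^{-r}f$ as the span of three multiplicative characters, and then reduce everything to linear algebra in $\mathbb{C}^{\mathbb{Z}_3}$. Since $\varphi^m$ has order $3$, the values $\varphi^{km}(g^{j+3\ell})=\omega_3^{kj}$ depend only on $j \bmod 3$. So any $f$ with $f(0)=0$ and $f(g^{j+3\ell})=x_j\omega_{p-1}^{r(j+3\ell)}$ can be written uniquely as
\[
f=\sum_{k\in\mathbb{Z}_3} c_k\,\varphi^{r+km},
\]
where $c\in\mathbb{C}^{\mathbb{Z}_3}$ is, up to the normalization convention, the $3$-point Fourier transform of $x$. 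With the indexing used in the statement, $c_k$ should be proportional to $\widehat{x}_k$.

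Next I apply $\widehat{\psi}=G(\psi)\overline{\psi}$ to each of the nontrivial characters $\varphi^{r+km}$; they are nontrivial because $1\le r<m$. This gives
\[
\widehat f=\sum_k c_k\,G(\varphi^{r+km})\,\overline{\varphi^{r+km}},
\]
so $\widehat f=G(\varphi^r)\overline f$ holds if and only if $c_kG(\varphi^{r+km})=G(\varphi^r)\overline{c_k}$ for each $k$. Each such condition forces the phase of $c_k$ up to sign, since $c_k^2/|c_k|^2=G(\varphi^r)/G(\varphi^{r+km})$. That ratio is a telescoping product of the $z_j$, and the $z_j$ are unimodular. The Jacobi-sum form of $z_j$ in~\eqref{eq: zj} follows directly from~\eqref{eq: Jacobi Gauss sum fraction} with $\psi=\varphi^{r+(j-1)m}$ and $\phi=\varphi^m$, once we note that $r+jm\not\equiv 0$.

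The flatness condition $|f(t)|=1$ for $t\ne0$ is equivalent to $|x_0|=|x_1|=|x_2|$. Writing $x$ as the inverse $3$-point transform of $c=(\rho_k u_k)$ with $u_k$ the fixed phases and $\rho_k\in\mathbb{R}$, the condition $|x_j|^2$ constant becomes two real linear equations in the three products $\rho_0\rho_1,\rho_1\rho_2,\rho_2\rho_0$, with coefficients of the form $\operatorname{Re}(\omega_3^{\pm1}u_k\overline{u_l})$. Solving this $2\times 3$ system by cross products should produce exactly the quantities $y_j=\varepsilon(1+z_j)\operatorname{Im}\bigl((1+z_{j-1})\overline{(1+z_{j+1})}\bigr)$, up to a common real factor, as the products $\rho\rho$ (or as $\widehat x_k$ carrying the phase). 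Recovering $\rho_0,\rho_1,\rho_2$ from the pairwise products requires these products to have consistent signs, which is exactly the condition $y_0y_1y_2>0$. This yields the stated formulas for $\widehat x_k$ in terms of $|y_0y_1/y_2|^{1/2}$ and so on.

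I would then check, as an algebraic identity, that $y_0y_1y_2$ is real. The key point is that flipping $\varepsilon$ flips its sign, so exactly one choice of $\varepsilon$ works provided $y_0y_1y_2\neq 0$.

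The main obstacle is proving $y_0y_1y_2\ne0$. This means showing $z_j\ne-1$ and that no $\operatorname{Im}\bigl((1+z_{j-1})\overline{(1+z_{j+1})}\bigr)$ vanishes. I would attack it with the arithmetic of Gauss sums in the cyclotomic field $\mathbb{Q}(\omega_{p-1},\omega_p)$: a vanishing would force an algebraic relation among Gauss sums, such as $G(\varphi^{a})=\pm G(\varphi^{b})$ or a real ratio. Stickelberger's theorem, via prime factorizations of Gauss sums above $p$, rules these out except in degenerate cases, and the excluded case $r=m/2$ should be precisely the one where a ratio becomes real. Plausibly the hypothesis $p\ge13$ is needed for the same reason.

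Once this nondegeneracy is settled, the remaining claims are routine. The conference property follows because $f(0)=0$, $|f|$ is constant on $\mathbb{Z}_p^\times$, and $\widehat f(0)=G(\varphi^r)\overline{f(0)}=0$ with $|\widehat f|=\sqrt p\,|f|$ elsewhere. The function $f$ is not a multiple of a character because all $y_j\ne0$ makes all three $c_k$ nonzero, and distinct characters are linearly independent. The dilation property is immediate from the definition. For parity, $-1=g^{(p-1)/2}$ and $m$ is even because $p\equiv1\bmod6$, so $\varphi^{r+km}(-1)=(-1)^{r}$.
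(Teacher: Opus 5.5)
Your framework is essentially the paper's. Writing $f=\sum_k c_k\varphi^{r+km}$ is the Fourier-side view of the paper's circulant matrix $\tfrac{1}{\sqrt p}C_\tau$ on $V$: its Fourier modes are exactly these characters, with eigenvalues $G(\varphi^{r+km})$. Your phase constraint and $2\times 3$ real system are the paper's reduction to $y_j=(1+z_j)c_j$ with $c_j\in\mathbb{R}$ and $y_0+y_1+y_2=0$. You do not actually need the cross-product derivation, since $y_0+y_1+y_2=0$ and $y_j=z_j\overline{y_j}$ can be checked directly. The genuine gap is the step you yourself call the main obstacle. You never establish $y_0y_1y_2\neq 0$, and your sketch does not pin down what must be proved, let alone prove it. That sketch says a vanishing gives $G(\varphi^a)=\pm G(\varphi^b)$ or ``a real ratio'', which Stickelberger excludes except in degenerate cases.

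Two ideas are missing. First, an elementary reduction. Because the $z_j$ are unimodular, $\operatorname{Im}\bigl((1+z_j)\overline{(1+z_k)}\bigr)=0$ with $z_k\neq-1$ means $1+z_j=b(1+z_k)$ for some real $b$. The real line through $-1$ and $z_k$ meets the unit circle only at those two points. So nondegeneracy is equivalent to $z_j\neq -1$ for all $j$ together with $z_0,z_1,z_2$ pairwise distinct.

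The first condition is a two-term relation $G(\varphi^a)=-G(\varphi^{a+m})$, which Stickelberger does rule out. The paper uses a simpler route: the automorphism $\sigma_k$ with $k\equiv g \bmod p$ and $k\equiv 1\bmod (p-1)$ sends $z_j$ to $\omega_3 z_j$.

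Second, the distinctness needs a specific arithmetic argument. The relation $z_j=z_{j+1}$ is the three-term relation $G(\varphi^{a})G(\varphi^{a+2m})=G(\varphi^{a+m})^2$, equivalently an equality of two Jacobi sums $J(\varphi^{r+jm},\varphi^m)$. To refute it via ideal factorizations you must exhibit $k\in\mathbb{Z}_{p-1}^\times$ at which the exponent sets $S_j$ differ. For $z_1$ versus $z_2$, neither $k=1$ nor $k=-1$ separates them. The paper takes the norm of $1+\omega_{p-1}^{r+m}$ to find $k\equiv 1 \bmod 3$ with $|1+\omega_{p-1}^{k(r+m)}|\geq 1$. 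This forces $\pi(kr)\in(m,2m)\cup(2m,3m)$ and puts $k$ in exactly one of $S_0,S_1$.

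This is exactly where $r\neq m/2$ enters, and the exclusion is sharp. For $r=m/2$, $\varphi^{r+m}$ is the quadratic character and $\varphi^{r+2m}=\overline{\varphi^r}$. All three characters agree at $-1$, so $G(\psi)G(\overline{\psi})=\psi(-1)p$ gives $G(\varphi^{r+m})^2=G(\varphi^r)G(\varphi^{r+2m})$. That is, $z_1=z_2$ and hence $y_0=0$. So this step is the heart of the proof, not a routine check. Also, $p\geq 13$ plays no role here; it only guarantees that an admissible $r$ exists.
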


The hypothesis $p \geq 13$ above ensures that $m \geq 4$, so an appropriate choice of $r$ exists.
We develop our argument for Theorem~\ref{thm: prime construction} over the course of this section and give a proof in Section~\ref{subsec: irrationality and distinctness}.

\subsection{A circulant matrix for the DFT}
\label{subsec: circulant DFT}
Our argument uses a remarkable fact about the DFT of prime order: when restricted to the $\lambda$-eigenspace of any dilation, its matrix is circulant for a certain choice of bases, provided $\lambda \neq 1$.

This part of our argument works in greater generality than Theorem~\ref{thm: prime construction}, so we use a broader setup in this subsection.
Let $p$ be any odd prime, and fix a primitive element $g \in \mathbb{Z}_p^\times$.
To describe the relevant dilation eigenspaces, first observe that whenever two elements $g^j,g^k \in \mathbb{Z}_p^\times$ generate identical subgroups  of $\mathbb{Z}_p^\times$, their corresponding dilations $D_{g^j},D_{g^k}$ have identical sets of eigenspaces and identical $1$-eigenspaces.
Hence every $\lambda$-eigenspace of a dilation with $\lambda \neq 1$ occurs as such an eigenspace of $D_{g^d}$ for some $d \mid (p-1)$.
To fix our attention on one such eigenspace, fix $d \mid (p-1)$ with $d \neq p-1$, fix an integer $r$ with $1 \leq r < m:=\tfrac{p-1}{d}$, and consider
\[
V:=\bigl\{ f \in \mathbb{C}^{\mathbb{Z}_p} : D_{g^d} f = \omega_m^{-r} f \bigr\}.
\]
Here, every $f\in V$ satisfies $f(0)=0$ since $\omega_m^r \neq 1$.
Furthermore, any $f \in V$ is determined by $f(g^j)$ for $j \in \mathbb{Z}_d$ since each element of $\mathbb{Z}_p^\times$ can be written uniquely as $g^{j+d\ell}$ for $j\in \mathbb{Z}_d=\{0,\ldots,d-1\}$ and $\ell\in \mathbb{Z}_m=\{0,\ldots,m-1\}$, and since repeated dilation gives $f(g^{j+d\ell})=\omega_m^{r\ell} f(g^j)=\omega_{p-1}^{rd\ell} f(g^j)$.
In particular, $V$ has orthonormal basis $\{v_j\}_{j\in \mathbb{Z}_d}$ given by
\[
v_j(t) = \left\{
\begin{array}{ll}
\tfrac{1}{\sqrt{m}}\omega_{p-1}^{r(j+d\ell)} & \text{if $t=g^{j+d\ell}$ for some $\ell \in \mathbb{Z}_m$} \\
0 & \text{otherwise}
\end{array} \right\}
\qquad
\forall j \in \mathbb{Z}_d=\{0,\ldots,d-1\},
\enspace
\forall t \in \mathbb{Z}_p,
\]
where members of $V$ expand as
\begin{equation}
\label{eq: expand in V}
f = \sqrt{m}\sum_{j\in \mathbb{Z}_d} f(g^j) \omega_{p-1}^{-rj} v_j
\qquad
\forall f \in V.
\end{equation}
As a consequence of the relation $\widehat{D_{g^d} f} = D_{g^{-d}} \hat{f}$, the Fourier transform over $\mathbb{Z}_p$ maps $V$ onto
\[
\bigl\{ f \in \mathbb{C}^{\mathbb{Z}_p} : D_{g^d} f = \omega_m^r f \bigr\}
= \bigl\{ \overline{f} : f \in V \bigr\}
=: \overline{V}.
\]
We equip $\overline{V}$ with the orthonormal basis $\{w_k\}_{k\in \mathbb{Z}_d}$ given by $w_k = \overline{v_{-k}}$, where
\begin{equation}
\label{eq: expand in Vbar}
f = \sqrt{m} \sum_{k\in \mathbb{Z}_d} f(g^{-k}) \omega_{p-1}^{-rk} w_k
\qquad
\forall f \in \overline{V},
\end{equation}
as seen by taking a complex conjugate in~\eqref{eq: expand in V}.

\begin{example}
To illustrate our notation, take $p=5$ and $d=2$ with $g=2$.
In $\mathbb{Z}_5$ we have $(1,2,3,4)=(g^0,g^1,g^3,g^2)$, so our bases are given by
\[
v_0 = w_0 = \tfrac{1}{\sqrt{2}}
\begin{bmatrix}0 & 1 & 0 & 0 & -1\end{bmatrix}^{\top},
\qquad
v_1 = \overline{w_1} = \tfrac{1}{\sqrt{2}}
\begin{bmatrix}0 & 0 & \mathrm{i} & -\mathrm{i} & 0\end{bmatrix}^{\top}
.
\]
In this example, $V = \overline{V}$ as sets, but their bases are different.
\end{example}

As in Theorem~\ref{thm: prime construction}, we write $\varphi$ for the unique multiplicative character on $\mathbb{Z}_p$ with $\varphi(g) = \omega_{p-1}$, and we express members of $\mathbb{C}^{\mathbb{Z}_d}$ as sequences rather than functions.

\begin{theorem}
\label{thm: circulant DFT}
In the notation above, the DFT over $\mathbb{Z}_p$ restricts to an isomorphism $\mathcal{F}|_V \colon V \to \overline{V}$ whose matrix is a circulant unitary.
Specifically, define
\begin{equation}
\label{eq: tau}
\tau_j := \sum_{\ell=0}^{m-1} \omega_{p-1}^{r(-j+d\ell)} \omega_p^{-g^{-j+d\ell}}
\qquad
\forall j \in \mathbb{Z},
\end{equation}
where $\tau_j = \tau_{j+d}$; treating $\tau$ as a member of $\mathbb{C}^{\mathbb{Z}_d}$, its Fourier transform is given by
\[
\widehat{\tau}_j = G(\varphi^{r+jm})
\qquad
\forall j \in \mathbb{Z}_d.
\]
Then $\tfrac{1}{\sqrt{p}}C_\tau$ is the matrix of $\mathcal{F}|_V \colon V \to \overline{V}$.

Furthermore, the following hold for any $f = \sum_{j \in \mathbb{Z}_d} x_j v_j \in V$ with coordinate vector $x \in \mathbb{C}^{\mathbb{Z}_d}$:
\begin{itemize}
\item[(a)]
$f$ is a nonzero multiple of a conference signal if and only if $x$ and $\tau * x$ are both flat,
\item[(b)]
$f$ is a multiple of a multiplicative character if and only if $\widehat{x}$ is a multiple of $\delta_j$ for some~$j \in \mathbb{Z}_d$,
\item[(c)]
for any $\lambda \in \mathbb{C}$,
\[
\hat{f} = \lambda \overline{f}
\quad\iff\quad
\tau * x = \lambda x^*
\quad\iff\quad
G(\varphi^{r+jm}) \widehat{x}_j = \lambda \overline{\widehat{x}_j} \quad \forall j \in \mathbb{Z}_d.
\]
\end{itemize}
\end{theorem}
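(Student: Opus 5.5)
I will compute the Fourier transform of each basis vector $v_k$ directly, express the result in the basis $\{w_j\}$, and check that the coefficient depends only on $j-k \bmod d$ (or $j+k$, depending on conventions). Then I will identify it with $\tau$.

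\textbf{Matrix entries.} Since $\mathcal{F}V=\overline{V}$, it suffices to evaluate $\mathcal{F}v_k$ at the points $g^{-j}$ and apply~\eqref{eq: expand in Vbar}. Write $\mathcal{F}v_k(g^{-j})=\tfrac{1}{\sqrt{pm}}\sum_\ell \omega_{p-1}^{r(k+d\ell)}\omega_p^{-g^{k+d\ell-j}}$. The coefficient of $w_j$ is then
\[
\sqrt{m}\,\omega_{p-1}^{-rj}\,\mathcal{F}v_k(g^{-j}) = \tfrac{1}{\sqrt{p}}\sum_\ell \omega_{p-1}^{r(k-j+d\ell)}\omega_p^{-g^{k-j+d\ell}}.
\]
This sum depends only on $k-j \bmod d$. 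Comparing with~\eqref{eq: tau} gives $\tau_{j-k}$, so the $(j,k)$ entry is $\tfrac{1}{\sqrt{p}}\tau_{j-k}$, which is $\tfrac{1}{\sqrt p}C_\tau$. The periodicity $\tau_{j+d}=\tau_j$ follows by reindexing $\ell$, using $\omega_{p-1}^{rdm}=1$. The matrix is unitary because $\mathcal{F}$ is unitary and both bases are orthonormal.

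\textbf{Fourier transform of $\tau$.} Compute $\widehat\tau_j=\sum_{i\in\mathbb{Z}_d}\tau_i\omega_d^{-ij}$. Substitute $s=-i+d\ell$, which ranges over $\mathbb{Z}_{p-1}$, and use $\omega_d^{-ij}=\omega_{p-1}^{-imj}=\omega_{p-1}^{smj}$. This gives
\[
\widehat\tau_j=\sum_{s\in\mathbb{Z}_{p-1}}\omega_{p-1}^{(r+jm)s}\omega_p^{-g^s}=\sum_{t\neq0}\varphi^{r+jm}(t)\,\omega_p^{-t}=G(\varphi^{r+jm}).
\]

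\textbf{Parts (a)–(c).}

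For (a), the coordinate vector of $f$ is $x$ and that of $\mathcal{F}f$ is $\tfrac{1}{\sqrt p}\tau*x$. Functions in $V$ vanish at $0$, and by~\eqref{eq: expand in V} they have constant modulus on $\mathbb{Z}_p^\times$ exactly when their coordinate vector is flat. The same holds for $\overline V$ via~\eqref{eq: expand in Vbar}. So $f$ is a nonzero multiple of a conference signal iff $x$ and $\tau*x$ are both flat.

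For (b), the characters lying in $V$ are exactly the $\varphi^{r+jm}$ for $j\in\mathbb{Z}_d$. Their coordinate vectors are $x_k\propto\omega_{p-1}^{jmk}=\omega_d^{jk}$, that is, characters of $\mathbb{Z}_d$. Such $x$ are exactly those with $\widehat x$ a multiple of a point mass.

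For (c), expand $\overline f=\sum_j \overline{x_j}\,\overline{v_j}=\sum_j\overline{x_{j}}w_{-j}$. Its coordinate vector is therefore $x^*$, so $\hat f=\lambda\overline f$ iff $\sqrt p\cdot\tfrac{1}{\sqrt p}\tau*x=\lambda x^*$. Taking the $\mathbb{Z}_d$ Fourier transform, with $\widehat{\tau*x}=\widehat\tau\widehat x$ and $\widehat{x^*}=\overline{\widehat x}$, gives the last equivalence.

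\textbf{Main obstacle.} The only delicate part is sign and index bookkeeping: the $w_k=\overline{v_{-k}}$ convention, the direction $j-k$ versus $k-j$ in the convolution, and matching the normalization $\hat f$ versus $\mathcal{F}f$. I expect to fix these by checking the $p=5$, $d=2$ example.
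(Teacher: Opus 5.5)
Your proof is correct. For the matrix identification and for parts (a)--(c) it follows the paper's argument. Your $(j,k)$ entry $\tfrac{1}{\sqrt p}\tau_{j-k}$ already matches the convention $(C_\tau)_{s,t}=\tau_{s-t}$, so the index bookkeeping is settled and the $p=5$ check is unnecessary.

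The genuine difference is how you obtain $\widehat{\tau}_j=G(\varphi^{r+jm})$.

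\textbf{Your route.} You compute it directly. As $(i,\ell)$ runs over $\{0,\ldots,d-1\}\times\{0,\ldots,m-1\}$, the exponent $s=-i+d\ell$ runs bijectively over $\mathbb{Z}_{p-1}$, and $\omega_d^{-ij}=\omega_{p-1}^{smj}$. So the length-$d$ DFT of the twisted Gauss periods reassembles into a full Gauss sum.

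\textbf{The paper's route.} It postpones this computation until after (b). There it has shown that $\varphi^{r+jm}$ has coordinate vector $d\sqrt{m}\,\check{\delta}_j$. This is a Fourier mode, hence an eigenvector of $C_\tau$ with eigenvalue $\widehat{\tau}_j$. The paper then evaluates $\widehat{\varphi^{r+jm}}$ at $1$, where only $w_0$ is supported, and reads off $\widehat{\tau}_j=G(\varphi^{r+jm})$.

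\textbf{What each buys.} Your route is shorter, self-contained, and independent of the matrix identification. The paper's route explains why the eigenvalues of $C_\tau$ must be Gauss sums. In coordinates, the eigenvectors of $C_\tau$ are exactly the characters lying in $V$, so the eigenvalue is just the relation $\widehat{\psi}=G(\psi)\overline{\psi}$ transported into coordinates.

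\textbf{Two small additions.} In (b), justify that $\varphi^s\in V$ if and only if $s\equiv r\pmod{m}$; this is one line from $D_{g^d}\varphi^s=\omega_m^{-s}\varphi^s$. In (a), note that the two constant moduli automatically differ by the factor $\sqrt{p}$, because $\tfrac{1}{\sqrt p}C_\tau$ is unitary. Without this, ``both flat'' does not yet give a multiple of a conference signal.
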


In~\eqref{eq: tau}, $\tau_j$ is a kind of ``twisted Gauss period'', where a \textit{Gauss period}~\cite{BerndtEW:98} would have $r=0$.

\begin{proof}[Proof of Theorem~\ref{thm: circulant DFT}]
For periodicity, reindex~\eqref{eq: tau} with $k=\ell+1$ and peel off the top term:
\[
\tau_j = \sum_{k=1}^m \omega_{p-1}^{r(-j-d+dk)} \omega_p^{-g^{-j-d+dk}}
= \omega_{p-1}^{r(-j-d+dm)} \omega_p^{-g^{-j-d+dm}} + \sum_{k=1}^{m-1} \omega_{p-1}^{r(-j-d+dk)} \omega_p^{-g^{-j-d+dk}}.
\]
This will equal $\tau_{j+d}$ once we show the peeled-off term coincides with the missing $k=0$ term on the far right.
Indeed, we can use the fact that $dm=p-1$ and $g^{p-1}=1$ to write
\[
\omega_{p-1}^{r(-j-d+dm)} \omega_p^{-g^{-j-d+dm}}
= \omega_{p-1}^{r(-j-d)}\omega_{p-1}^{r(p-1)} \omega_p^{-g^{-j-d}\cdot g^{p-1}}
=\omega_{p-1}^{r(-j-d)} \omega_p^{-g^{-j-d}}.
\]
Thus, $\tau_j = \tau_{j+d}$.
We compute $\widehat{\tau}$ further below.

For the matrix of $\mathcal{F}|_V$, fix $j \in \mathbb{Z}_d$ and expand $\sqrt{p}\mathcal{F} v_j = \widehat{v}_j$ with~\eqref{eq: expand in Vbar}:
\[
\widehat{v}_j
= \sqrt{m} \sum_{k\in \mathbb{Z}_d} \widehat{v}_j(g^{-k}) \omega_{p-1}^{-rk} w_k.
\]
Here, the coefficient on $w_k$ is
\[
\sqrt{m} \omega_{p-1}^{-rk} \widehat{v}_j(g^{-k})
= \sqrt{m} \omega_{p-1}^{-rk} \sum_{t \in \mathbb{Z}_p} v_j(t) \omega_p^{-tg^{-k}}
= \omega_{p-1}^{-rk} \sum_{\ell=0}^{m-1} \omega_{p-1}^{r(j+d\ell)}\omega_p^{-g^{j+d\ell}\cdot g^{-k}}
= \tau_{k-j}.
\]
This matches the $(k,j)$-entry of $C_\tau$ since $\tau$ is $d$-periodic.
Consequently, $\widehat{v}_j = \sum_{k\in \mathbb{Z}_d} [C_\tau]_{k,j} w_k$ and $\sqrt{p}\mathcal{F}|_V$ has matrix $C_\tau$.

For (a), recall that $f$ and $\hat{f}$ both vanish at $0$ since they belong to $V$ and $\overline{V}$, respectively.
Hence, $f$ is a nonzero multiple of a conference signal if and only if $f$ and $\hat{f}$ are both flat on $\mathbb{Z}_p\setminus\{0\}$.
To interpret this in terms of coordinate vectors, observe that the basis vectors for each of $V$ and $\overline{V}$ have modulus $\tfrac{1}{\sqrt{m}}$ on their supports, which partition $\mathbb{Z}_p\setminus\{0\}$.
Consequently, $f$ and $\hat{f}$ are flat on $\mathbb{Z}_p\setminus\{0\}$ if and only if their coordinate vectors $x$ and $C_\tau x = \tau * x$ are flat.

For (b), we first identify the multiplicative characters in $V$.
Each multiplicative character can be written as $\varphi^s$ for some $s \in \mathbb{Z}_{p-1}$, and since
\[
D_{g^d}\varphi^s(g^t)
=\varphi^s(g^{t-d})
= \omega_{p-1}^{st-sd}
=\omega_m^{-s} \omega_{p-1}^{st},
\qquad
\omega_m^{-r} \varphi^s(g^t)= \omega_m^{-r} \omega_{p-1}^{st}
\qquad
\forall t \in \mathbb{Z}_{p-1},
\]
$\varphi^s$ belongs to $V$ if and only if $s=r+jm$ for some $j \in \mathbb{Z}_d$.
Next, we find the coordinate vector of $\varphi^{r+jm}$ by expanding with~\eqref{eq: expand in V}:
\[
\varphi^{r+jm}
= \sqrt{m} \sum_{k \in \mathbb{Z}_d} \varphi^{r+jm}(g^k) \omega_{p-1}^{-rk} v_k
= \sqrt{m} \sum_{k\in \mathbb{Z}_d} \omega_{p-1}^{k(r+jm)} \omega_{p-1}^{-rk} v_k
= \sqrt{m} \sum_{k\in \mathbb{Z}_d} \omega_d^{jk} v_k
\qquad
\forall j \in \mathbb{Z}_d.
\]
Thus, $\varphi^{r+jm}$ has coordinate vector $\{\sqrt{m} \omega_d^{jk} \}_{k \in \mathbb{Z}_d} = d \sqrt{m}\check{\delta}_j$, i.e., the vector whose Fourier transform over $\mathbb{Z}_d$ is $d \sqrt{m} \delta_j$.
This easily implies (b).
Furthermore, we can use it to show $\widehat{\tau}_j = G(\varphi^{r+jm})$.
Indeed,
\[
\widehat{\varphi^{r+jm}} = \sqrt{p} \mathcal{F} \varphi^{r+jm}
= d \sqrt{m} \sum_{k \in \mathbb{Z}_d} (\tau * \check{\delta}_j)_k w_k
= d \sqrt{m} \sum_{k \in \mathbb{Z}_d} \widehat{\tau}_j (\check{\delta}_j)_k w_k,
\]
where the last equality is easily verified in the Fourier domain;
evaluating at $1$ gives
\[
G(\varphi^{r+jm}) = \widehat{\varphi^{r+jm}}(1)
= d \sqrt{m} \widehat{\tau}_j \sum_{k \in \mathbb{Z}_d}  (\check{\delta}_j)_k \overline{v_{-k}}(g^0)
= \widehat{\tau}_j.
\]

Finally, for (c), we expand
\[
\hat{f} = \sum_{k\in \mathbb{Z}_d} (\tau * x)_k w_k,
\qquad
\lambda \overline{f} = \lambda \sum_{j\in \mathbb{Z}_d} \overline{x_j} \overline{v_j}
= \lambda \sum_{k\in \mathbb{Z}_d} \overline{x_{-k}} \overline{v_{-k}}
= \sum_{k\in \mathbb{Z}_d} (\lambda x^*)_k w_k.
\]
Comparing coefficients gives the first equivalence of~(c).
Taking the Fourier transform over $\mathbb{Z}_d$ gives the second equivalence, since $\widehat{\tau}_j=G(\varphi^{r+jm})$ and $\widehat{x^*}=\overline{\widehat{x}}$.
\end{proof}

\begin{example}
When $d=1$, Theorem~\ref{thm: circulant DFT}(b) implies every member of $V$ is a multiple of a multiplicative character.
Indeed, the only basis vector of $V$ is $v_0=\tfrac{1}{\sqrt{p-1}}\varphi^r$.
\end{example}

\begin{remark}
When $d=2$, one can show every conference signal in $V$ is a multiple of a multiplicative character.
To get an idea of why this holds, apply Theorem~\ref{thm: circulant DFT}(a) and consider the action of the $2 \times 2$ unitary $\tfrac{1}{\sqrt{p}} C_\tau$ on the Bloch sphere, as in Example~\ref{ex: Bloch flat} and Figure~\ref{fig: Bloch}.
The projective points corresponding to $\check{\delta}_0$ and $\check{\delta}_1$ are fixed by the action of $\tfrac{1}{\sqrt{p}} C_\tau$ since Fourier modes are eigenvectors of circulant matrices.
To show these are the only two points of intersection for the red and purple circles in Figure~\ref{fig: Bloch}, it suffices to show those circles are distinct for this unitary.
The only unitaries that preserve the red circle are monomial matrices, and our circulant matrix $C_\tau = \left[ \begin{smallmatrix*} \tau_0 & \tau_1 \\ \tau_1 & \tau_0 \end{smallmatrix*} \right]$ would be monomial only if one of $\tau_0,\tau_1$ vanished.
However, one can use ideas like in the proof of Lemma~\ref{lem: irrationality and distinctness}(a) below to show this does not happen because each of $\tau_0,\tau_1$ is irrational.
\end{remark}

\subsection{Biunimodular vectors for $3 \times 3$ circulant unitaries}
Our proof of the main result (Theorem~\ref{thm: prime construction}) applies Theorem~\ref{thm: circulant DFT} with $d=3$, which is the smallest value not ruled out above.
Here, Theorem~\ref{thm: circulant DFT}(a) calls for a vector $x\in \mathbb{C}^{\mathbb{Z}_3}$ that is \textbf{biunimodular} for the $3\times 3$ circulant unitary $\tfrac{1}{\sqrt{p}}C_\tau$, meaning both $x$ and $\tfrac{1}{\sqrt{p}}C_\tau x$ have all unimodular entries.
Any Fourier mode satisfies this condition as well as the con-eigenvector condition in Theorem~\ref{thm: circulant DFT}(c), but the resulting conference signal is a multiple of a multiplicative character by Theorem~\ref{thm: circulant DFT}(b).
In this subsection, we move beyond Fourier modes by presenting a general method to construct a biunimodular vector for any $3 \times 3$ circulant unitary with two mild properties, as in the theorem below; the resulting vector also satisfies the analogue of the con-eigenvector condition in Theorem~\ref{thm: circulant DFT}(c) and is not a multiple of a Fourier mode.
Theorem~\ref{thm: prime construction} implements this construction for $\tfrac{1}{\sqrt{p}}C_\tau$; we will check the ``two mild properties'' in Section~\ref{subsec: irrationality and distinctness}.

\begin{theorem}
\label{thm: 3x3 circulant unitaries}
Let
$C_u = \left[ \begin{smallmatrix*}
u_0 & u_2 & u_1 \\
u_1 & u_0 & u_2 \\
u_2 & u_1 & u_0
\end{smallmatrix*} \right]$
be a $3 \times 3$ circulant unitary, where $u \in \mathbb{C}^{\mathbb{Z}_3}$.
To produce a biunimodular vector for $C_u$, define $z_j := \widehat{u}_{j-1} \overline{\widehat{u}_j}$ for each $j \in \mathbb{Z}_3$,  and assume the following hold:
\begin{itemize}
\item[(i)]
$z_j \neq -1$ for each $j$,
\item[(ii)]
$z_0,z_1,z_2$ are pairwise distinct.
\end{itemize}
Then there is exactly one choice of $\varepsilon \in \{\pm 1\}$ for which
\begin{equation}
\label{eq: yj}
y_j:=\varepsilon(1+z_j)
\operatorname{Im}\!\left((1+z_{j-1})\overline{(1+z_{j+1})}\right)
\qquad
\forall j\in \mathbb{Z}_3
\end{equation}
satisfy $y_0y_1y_2>0$.
For this choice of $\varepsilon$, define $x\in\mathbb C^{\mathbb Z_3}$ by
\begin{equation}
\label{eq: xj}
\widehat{x}_0=\left|\frac{y_0 y_1}{y_2}\right|^{1/2},
\quad
\widehat{x}_1=\left|\frac{y_2}{y_0 y_1}\right|^{1/2} y_1,
\quad
\widehat{x}_2=\left|\frac{y_2}{y_0 y_1}\right|^{1/2} \overline{y_0}.
\end{equation}
Then $x$ and $u * x$ are both flat, with $u * x = \widehat{u}_0 x^*$.
Furthermore, $x$ is not a multiple of a Fourier mode, where $\widehat{x}_j \neq 0$ for every $j \in \mathbb{Z}_3$.
\end{theorem}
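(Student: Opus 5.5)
The plan is to work entirely on the Fourier side over $\mathbb{Z}_3$, where circulant matrices are diagonal. Since $C_u$ is unitary, $|\widehat{u}_j|=1$ for every $j$. Hence each $z_j=\widehat{u}_{j-1}\overline{\widehat{u}_j}$ is unimodular, and $z_0z_1z_2=1$.

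\textbf{Step 1: the con-eigenvector condition.} The identity $u*x=\widehat{u}_0x^*$ is equivalent to $\widehat{u}_j\widehat{x}_j=\widehat{u}_0\overline{\widehat{x}_j}$ for every $j$, as in the proof of Theorem~\ref{thm: circulant DFT}(c). For nonzero $\widehat{x}_j$ this says:
\begin{itemize}
\item[(1)] $\widehat{x}_0$ has real argument (mod $\pi$);
\item[(2)] $\widehat{x}_1/\overline{\widehat{x}_1}=\widehat{u}_0\overline{\widehat{u}_1}=z_1$;
\item[(3)] $\widehat{x}_2/\overline{\widehat{x}_2}=\widehat{u}_0\overline{\widehat{u}_2}=\overline{z_0}$.
\end{itemize}
For unimodular $z=e^{\mathrm{i}\theta}$ we have $1+z=2\cos(\theta/2)e^{\mathrm{i}\theta/2}$. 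So if $z\neq-1$, any nonzero real multiple $w$ of $1+z$ satisfies $w/\overline{w}=z$. In particular, $y_1$ is a real multiple of $1+z_1$ and $\overline{y_0}$ is a real multiple of $1+\overline{z_0}$. Therefore~\eqref{eq: xj} meets (1)--(3) as soon as every $y_j\neq0$, and this is exactly where hypothesis~(i) enters.

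\textbf{Step 2: nonvanishing and the choice of $\varepsilon$.} Write $1+z_k=c_ke^{\mathrm{i}\theta_k/2}$ with $c_k$ real. Then $\operatorname{Im}\bigl((1+z_{j-1})\overline{(1+z_{j+1})}\bigr)=c_{j-1}c_{j+1}\sin\bigl((\theta_{j-1}-\theta_{j+1})/2\bigr)$. This vanishes only if $c_{j\pm1}=0$ or $z_{j-1}=z_{j+1}$, because half-angles mod $\pi$ determine $z$. So (i) and (ii) give $y_j\neq0$.

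Each $y_j$ is a real multiple of $e^{\mathrm{i}\theta_j/2}$. Hence $y_0y_1y_2$ is a real multiple of $e^{\mathrm{i}(\theta_0+\theta_1+\theta_2)/2}=\pm1$, using $z_0z_1z_2=1$. So the product is real and nonzero. Replacing $\varepsilon$ by $-\varepsilon$ flips its sign, which gives existence and uniqueness of $\varepsilon$. I would record the real factorization explicitly, since it is also needed in Step 3.

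\textbf{Step 3: flatness, the main obstacle.} The vector $x$ is flat iff its cyclic autocorrelation vanishes at nonzero shifts. On $\mathbb{Z}_3$ this is the single complex equation
\[
\widehat{x}_0\overline{\widehat{x}_1}+\widehat{x}_1\overline{\widehat{x}_2}+\widehat{x}_2\overline{\widehat{x}_0}=0,
\]
since shift $2$ is its conjugate. Once $x$ is flat, $u*x=\widehat{u}_0x^*$ has the same modulus as $x$ entrywise, so it is flat too.

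To verify this equation, I will use the phases fixed in Step 1 and write $\widehat{x}_j=\rho_je^{\mathrm{i}\alpha_j}$. Each of the three terms is then a real multiple of a known unit vector $e_k$, and the coefficients are the products $\rho_0\rho_1$, $\rho_1\rho_2$, $\rho_2\rho_0$. A relation $P_1e_1+P_2e_2+P_3e_3=0$ with real $P$'s is solved, by Cramer's rule in $\mathbb{R}^2$, by $P_k\propto\operatorname{Im}(e_i\overline{e_j})$.

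The real work is to check that, for the specific $\widehat{x}$ in~\eqref{eq: xj}, these products are exactly the $\operatorname{Im}$-factors appearing in the $y_j$. That gives the identity
\[
\widehat{x}_0\overline{\widehat{x}_1}+\cdots=\text{(real scalar)}\sum_j(\text{sine identity})=0.
\]
I would first try to verify it by substituting $1+z_k=c_ke^{\mathrm{i}\theta_k/2}$, which reduces it to the identity $\sin(A-B)+\sin(B-C)+\sin(C-A)$-type cancellation. The absolute-value normalizations in~\eqref{eq: xj} are designed so that $\widehat{x}_0\overline{\widehat{x}_1}$, $\widehat{x}_1\overline{\widehat{x}_2}$ and $\widehat{x}_2\overline{\widehat{x}_0}$ equal $|y_0y_1/y_2|^{0}$-balanced products like $\overline{y_1}$, $y_1y_0|y_2|/|y_0y_1|$, and so on. The positivity $y_0y_1y_2>0$ is what fixes the signs consistently.

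\textbf{Step 4: not a Fourier mode.} All $\widehat{x}_j\neq0$ because all $y_j\neq0$. So $\widehat{x}$ is not a multiple of any $\delta_j$, and hence $x$ is not a multiple of a Fourier mode.
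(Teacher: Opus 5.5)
Your Steps 1, 2 and 4 are correct. Step 2's half-angle argument is equivalent to the paper's observation that a line meets the unit circle at most twice. Step 1 is a slightly more direct route to $u*x=\widehat{u}_0x^*$ than the paper's, which first proves $\widehat{x}_j\overline{\widehat{x}_{j-1}}=y_j$ and then shows that $\widehat{u}_j\widehat{x}_j/\overline{\widehat{x}_j}$ is independent of $j$. Your version does not even need $y_0y_1y_2>0$.

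The gap is Step 3. Flatness of $x$ is the real content of the theorem, and you only outline it. The outline also misidentifies the pieces:
\begin{itemize}
\item The products $\rho_j\rho_k$ are not the $\operatorname{Im}$-factors of the $y_j$. For example, $\rho_0\rho_1=|y_1|=|1+z_1|\,\bigl|\operatorname{Im}\bigl((1+z_0)\overline{(1+z_2)}\bigr)\bigr|$.
\item The model identity $\sin(A-B)+\sin(B-C)+\sin(C-A)=0$ is false in general. The relevant one is $\sum_j e^{\mathrm{i}\phi_j}\sin(\phi_{j-1}-\phi_{j+1})=0$.
\item The sentence about ``balanced products'' never says what the three terms actually equal.
\end{itemize}
A smaller slip: flatness of $x$ means the autocorrelation of $\widehat{x}$ vanishes at nonzero shifts, not that of $x$. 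Your displayed equation is nonetheless the right one.

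To close the gap, first compute the three terms exactly from \eqref{eq: xj}:
\[
\widehat{x}_0\overline{\widehat{x}_1}=\overline{y_1},
\qquad
\widehat{x}_2\overline{\widehat{x}_0}=\overline{y_0},
\qquad
\widehat{x}_1\overline{\widehat{x}_2}=\frac{|y_2|}{|y_0y_1|}\,y_0y_1=\frac{|y_2|}{|y_0y_1|}\cdot\frac{|y_0y_1y_2|}{y_2}=\overline{y_2}.
\]
The last step is exactly where $y_0y_1y_2>0$ enters. So your flatness equation is precisely $\overline{y_0+y_1+y_2}=0$.

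Next, write $w_k:=1+z_k=a_k+\mathrm{i}b_k$. Then $\varepsilon(y_0+y_1+y_2)=\sum_j w_j\operatorname{Im}(w_{j-1}\overline{w_{j+1}})$. Its real and imaginary parts are $\sum_j a_j(b_{j-1}a_{j+1}-a_{j-1}b_{j+1})$ and $\sum_j b_j(b_{j-1}a_{j+1}-a_{j-1}b_{j+1})$. Each is a $3\times 3$ determinant with two equal rows; equivalently, the six terms cancel in pairs. So both vanish for any three complex numbers.

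This is your Cramer's-rule idea, but applied to the unnormalized $w_k$ rather than to unit vectors weighted by $\rho_j\rho_k$. It is exactly how the paper finishes. With these two computations added, your argument is complete.
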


Relatedly, see~\cite{Fuhr:15} for a way to construct a biunimodular vector for any $3 \times 3$ unitary, not necessarily circulant.

\begin{remark}
\label{rem: y explained}
Before we prove the theorem, we give some context that will hopefully make the construction less mysterious.
Let $u$ be as in the theorem statement (so that each $\widehat{u}_j$ is unimodular), and consider the problem of finding $x \in \mathbb{C}^{\mathbb{Z}_3}$ such that $x$ is flat, $u*x=\lambda x^*$ for some (necessarily unimodular) $\lambda \in \mathbb{C}$, and $\widehat{x}_j \neq 0$ for every $j \in \mathbb{Z}_3$.
For simplicity, assume that last condition already holds, so that $x \neq 0$.
To move this problem to the frequency domain, observe that $x$ is flat if and only if $|x|^2 = x \cdot \overline{x}$ is constant, if and only if $\widehat{|x|^2}$ is a multiple of $\delta_0$.
Here, $(\widehat{|x|^2})_1$ and $(\widehat{|x|^2})_2$ are complex conjugates since $|x|^2$ is real-valued, so $\widehat{|x|^2}$ is a multiple of $\delta_0$ if and only if $0=(\widehat{|x|^2})_1=\tfrac{1}{3} (\widehat{x} * \widehat{x}^*)_1$.
By rescaling and opening the convolution, we find $x$ is flat if and only if
\[
\widehat{x}_0 \overline{\widehat{x}_2} 
+ \widehat{x}_1 \overline{\widehat{x}_0} 
+ \widehat{x}_2 \overline{\widehat{x}_1}
= 0.
\]
Likewise, $u*x=\lambda x^*$ if and only if $\widehat{u}_j \widehat{x}_j = \lambda \overline{\widehat{x}_j}$ for each $j \in \mathbb{Z}_3$.

Faced with the nonlinear equation above, we take inspiration from Bj\"{o}rck~\cite{Bjorck:87} and introduce $y_j := \widehat{x}_j \overline{\widehat{x}_{j-1}}$ and $z_j := \widehat{u}_{j-1} \overline{\widehat{u}_j}$ for each $j \in \mathbb{Z}_3$.
Then $y_0 y_1 y_2 = \prod_j |\widehat{x}_j|^2> 0$, and $x$ is flat if and only if $y_0+y_1+y_2=0$ (phew!).
Furthermore, $u*x=\lambda x^*$ only if $y_j = z_j \overline{y_j}$ for each $j$, since
\[
\frac{y_j}{\overline{y_j}}
= \frac{ \widehat{x}_j \overline{\widehat{x}_{j-1}} }{ \overline{\widehat{x}_j} \widehat{x}_{j-1} }
= \frac{\overline{\widehat{x}_{j-1}}/\widehat{x}_{j-1}}{\overline{\widehat{x}_j}/\widehat{x}_j}
= \frac{\widehat{u}_{j-1}/\lambda}{\widehat{u}_j/\lambda}
= z_j.
\]
The condition $y_j = z_j \overline{y_j}$ is also linear in the real and imaginary parts of $y_j$; to connect it with our choice of $y$ in~\eqref{eq: yj}, we transform it further.
One way to achieve this condition is to set $y_j = (1+z_j)c_j$ for $c_j \in \mathbb{R}$ as this implies
\[
z_j \overline{y_j} = z_j(1+\overline{z_j})c_j =(z_j+1)c_j=y_j;
\]
conversely, when $z_j \neq -1$ this is the only way to achieve the condition $y_j = z_j \overline{y_j}$ as this implies
\[
\overline{\left( \frac{y_j}{1+z_j} \right)} 
= \frac{\overline{y_j}}{1+\overline{z_j}} \cdot \frac{z_j}{z_j}
= \frac{y_j}{z_j+1}.
\]
Recalling the linear constraint $y_0+y_1+y_2=0$, we can narrow the possibilities for $y$ by solving
\[
\left\{ \begin{array}{l}
(1+z_0)c_0+(1+z_1)c_1+(1+z_2)c_2=0 \\
(1+\overline{z_0})c_0+(1+\overline{z_1})c_1+(1+\overline{z_2})c_2=0
\end{array} \right.
\]
for $c_0,c_1,c_2 \in \mathbb{R}$, where adding and subtracting equations shows the system is equivalent to
\[
\left\{ \begin{array}{l}
\operatorname{Re}(1+z_0)c_0+\operatorname{Re}(1+z_1)c_1+\operatorname{Re}(1+z_2)c_2=0 \\
\operatorname{Im}(1+z_0)c_0+\operatorname{Im}(1+z_1)c_1+\operatorname{Im}(1+z_2)c_2=0.
\end{array} \right.
\]
Under the hypotheses of our theorem, this system turns out to have a $1$-dimensional solution space, spanned by $c_j=
\operatorname{Im}\!\left((1+z_{j-1})\overline{(1+z_{j+1})}\right)$.
This explains our choice of $y$ in~\eqref{eq: yj}, while $x$ is designed to satisfy $\widehat{x}_j \overline{\widehat{x}_{j-1}}=y_j$ in~\eqref{eq: xj}.
\end{remark}

\begin{proof}[Proof of Theorem~\ref{thm: 3x3 circulant unitaries}]
First, we verify the claim about $y_0y_1y_2$.
Leaving $\varepsilon \in \{\pm 1\}$ undetermined for the moment, observe that each $y_j = (1+z_j)c_j$ for some $c_j \in \mathbb{R}$; as explained above, this implies $y_j=z_j \overline{y_j}$.
Then $y_0y_1y_2= z_0z_1z_2\overline{y_0y_1y_2}=\overline{y_0y_1y_2}$ is real.
To show some choice of $\varepsilon$ makes $y_0y_1y_2>0$, it now suffices to show each $y_j\neq 0$, so that $y_0 y_1 y_2 \neq 0$ and opposite choices of $\varepsilon$ give opposite signs for $y_0y_1y_2$.
Considering (i) and the definition of $y$, it further suffices to show $(1+z_j)\overline{(1+z_k)} \not \in \mathbb{R}$ whenever $j \neq k$.
Suppose on the contrary that $(1+z_j)\overline{(1+z_k)}=a \in \mathbb{R}$ for some $j \neq k$.
Multiplying both sides by $\tfrac{1+z_k}{|1+z_k|^2}$, we find $1+z_j=b(1+z_k)$ for some $b \in \mathbb{R}$, so $z_j$ falls on the line $z=b(1+z_k)-1$, $b \in \mathbb{R}$.
However, any line in the complex plane intersects the unit circle in at most two points, and our line already intersects it at $z=-1$ (when $b=0$) and $z=z_k$ (when $b=1$).
Hence $z_j\in\{-1,z_k\}$, contrary to~(i) and~(ii).
This proves the claim.

Next, we use the strategy from Remark~\ref{rem: y explained} to show $x$ is flat.
Indeed, each $\widehat{x}_j \neq 0$ since $y_0,y_1,y_2\neq 0$, and
\[
\widehat{x}_0\overline{\widehat{x}_2}=y_0,
\qquad
\widehat{x}_1\overline{\widehat{x}_0}=y_1,
\qquad
\widehat{x}_2\overline{\widehat{x}_1}
= \left|\frac{y_2}{y_0y_1}\right|\overline{y_0y_1}
= \frac{|y_2|^2}{\overline{y_2}} \cdot \frac{\overline{y_0 y_1 y_2}}{|y_0y_1y_2|}
= y_2,
\]
where the last step uses the fact that $y_0y_1y_2>0$.
To show $x$ is flat, it now suffices to show $y_0+y_1+y_2=0$.
Indeed, write $1+z_k=a_k+b_k\mathrm{i}$ with $a_k,b_k\in \mathbb{R}$ for each $k$.
Then 
\[
\operatorname{Im}\!\left((1+z_k)\overline{(1+z_\ell)}\right)=b_ka_\ell-a_kb_\ell
\qquad
\forall k,\ell \in \mathbb{Z}_3,
\]
so for each $j\in \mathbb{Z}_3$,
\[
\varepsilon y_j
= (a_j + b_j \mathrm{i})(b_{j-1} a_{j+1} - a_{j-1} b_{j+1})
= (a_j b_{j-1} a_{j+1} - a_j a_{j-1} b_{j+1})
+ (b_j b_{j-1} a_{j+1} - b_j a_{j-1} b_{j+1})\mathrm{i}.
\]
When we sum this over all $j\in \mathbb{Z}_3$, the terms cancel in pairs.
Thus, $x$ is flat.

Finally, to show $u*x=\widehat{u}_0 x^*$, we verify that $\widehat{u}_j \widehat{x}_j = \widehat{u}_0 \overline{\widehat{x}_j}$ for each $j \in \mathbb{Z}_3$.
Indeed, the condition $y_j = z_j \overline{y_j}$ says that $\widehat{x}_j \overline{\widehat{x}_{j-1}} = \widehat{u}_{j-1} \overline{\widehat{u}_j} \overline{\widehat{x}_j} \widehat{x}_{j-1}$, so
\[
\frac{ \widehat{u}_j\widehat{x}_j}{\overline{\widehat{x}_j}} = \frac{\widehat{u}_{j-1}\widehat{x}_{j-1}}{\overline{\widehat{x}_{j-1}}}
\qquad
\forall j \in \mathbb{Z}_3.
\]
In other words, the value of $\frac{ \widehat{u}_j \widehat{x}_j}{\overline{\widehat{x}_j}}$ is independent of $j$.
Taking $j=0$, we see the common value is $\widehat{u}_0$ since $\widehat{x}_0$ is real.
Thus, $\widehat{u}_j\widehat{x}_j = \widehat{u}_0 \overline{\widehat{x}_j}$ for every $j \in \mathbb{Z}_3$.
\end{proof}

\subsection{Irrationality and distinctness of Gauss and Jacobi sums}
\label{subsec: irrationality and distinctness}

To prove our main result, we now show $\tfrac{1}{\sqrt{p}} C_\tau$ satisfies the ``two mild properties'' from Theorem~\ref{thm: 3x3 circulant unitaries}.

\begin{lemma}
\label{lem: irrationality and distinctness}
Let $p \geq 13$ be a prime with $p = 1 \bmod 6$.
Fix a primitive element $g \in \mathbb{Z}_p^\times$, and let $\varphi$ be the unique multiplicative character on $\mathbb{Z}_p$ with $\varphi(g)=\omega_{p-1}$.
Next, choose any integer $r$ with $1 \leq r < m:=\tfrac{p-1}{3}$, $r \neq \tfrac{m}{2}$, and define $z \in \mathbb{C}^{\mathbb{Z}_3}$ by
\begin{equation}
\label{eq: zj again}
z_j = \frac{G(\varphi^{r+(j-1)m})}{G(\varphi^{r+jm})}
=\frac{J(\varphi^{r+(j-1)m},\varphi^m)}{G(\varphi^m)}
\qquad
\forall j\in \mathbb{Z}_3.
\end{equation}
Then the following hold:
\begin{itemize}
\item[(a)]
$z_j$ is irrational for each $j$,
\item[(b)]
$z_0,z_1,z_2$ are pairwise distinct.
\end{itemize}
\end{lemma}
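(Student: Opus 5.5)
The plan is to exploit the Galois action on Gauss sums. Put $s_j := r+jm$ and $\chi := \varphi^m$, a cubic character. Then $z_j = G(\varphi^{s_{j-1}})/G(\varphi^{s_j})$, and every Gauss sum involved lies in $K:=\mathbb{Q}(\omega_{p-1},\omega_p)$. Since $\gcd(p,p-1)=1$, the extension $K/\mathbb{Q}(\omega_{p-1})$ has Galois group $\{\sigma_a\}_{a\in\mathbb{Z}_p^\times}$, where $\sigma_a$ fixes $\omega_{p-1}$ and sends $\omega_p\mapsto\omega_p^a$. Directly from~\eqref{eq: Gauss and Jacobi sums}, for every multiplicative character $\psi$,
\[
\sigma_a\bigl(G(\psi)\bigr)=\overline{\psi(a)}\,G(\psi).
\]

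\textbf{Part (a).} Since $|z_j|=1$, the only rational values $z_j$ could take are $\pm1$. Suppose $G(\varphi^{s_{j-1}})=\pm G(\varphi^{s_j})$. Apply $\sigma_a$ to both sides and divide by the (nonzero) original equation. This gives $\varphi^{s_{j-1}}(a)=\varphi^{s_j}(a)$ for all $a$, hence $\chi(a)=1$ for all $a$. That contradicts the nontriviality of $\chi=\varphi^m$, since $m<p-1$. The same argument shows more: $z_j$ is not fixed by any $\sigma_a$ with $\chi(a)\ne1$, so $z_j\notin\mathbb{Q}(\omega_{p-1})$. In particular $z_j\neq-1$, which is hypothesis~(i) of Theorem~\ref{thm: 3x3 circulant unitaries}.

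\textbf{Part (b).} The $\sigma_a$ trick gives nothing here: both sides of $z_j=z_k$ transform by the same character. So I would instead use the other half of the Galois group, namely $\rho_b$ for $b\in\mathbb{Z}_{p-1}^\times$, which fixes $\omega_p$ and sends $\omega_{p-1}\mapsto\omega_{p-1}^b$. This acts by $G(\varphi^s)\mapsto G(\varphi^{bs})$. I would combine it with Stickelberger's theorem, which gives the $\mathfrak{P}$-adic valuation of Gauss sums for a prime $\mathfrak{P}$ of $K$ above $p$:
\[
v_{\mathfrak{P}}\bigl(G(\varphi^{s})\bigr)=(p-1)\Bigl\{\tfrac{s}{p-1}\Bigr\}
\]
(up to the sign convention for $\varphi$ relative to the Teichm\"uller character, which only reflects $s\mapsto -s$). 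Since $z_0z_1z_2=1$, it suffices to rule out $z_j=z_{j+1}$. That equation says $G(\varphi^{s})G(\varphi^{s+2m})=G(\varphi^{s+m})^2$ with $s=s_{j-1}$. Applying $\rho_b$ and taking valuations, with $u:=s/(p-1)$, I obtain the requirement
\[
\{bu\}+\{bu+\tfrac{2b}{3}\}=2\{bu+\tfrac{b}{3}\}
\qquad\forall b\in\mathbb{Z}_{p-1}^\times .
\]

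\textbf{Main obstacle.} The hard step is showing that this identity fails for some unit $b$. I would first try $b=1$ and $b=-1$, which are always available, and compare according to which third of $[0,1)$ contains $\{u\}$. For $b=1$ the left side jumps by $-1$ when $\{u\}+\tfrac23\ge1$, and the right side jumps by $-2$ when $\{u\}+\tfrac13\ge1$. So the identity forces $\{u\}$ to lie in a specific subinterval, and $b=-1$ should then constrain the reflected point as well. I expect the surviving exceptional case to be exactly $u\equiv\tfrac16 \pmod{\tfrac13}$, that is, $2r=m$. This is precisely what the hypothesis $r\ne\tfrac m2$ excludes, and it would explain that hypothesis. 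If $b=\pm1$ do not suffice, I would fall back on averaging over all units $b$, or on choosing $b\equiv -1\pmod 3$ close to $1$, to force a contradiction.
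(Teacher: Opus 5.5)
Your part (a) is correct and is essentially the paper's argument: an automorphism fixing $\omega_{p-1}$ and moving $\omega_p$ multiplies $z_j$ by a nontrivial cube root of unity. Your Stickelberger reduction in part (b) is also sound, and carries the same information as the paper's Proposition~\ref{prop: ideals}. Your $b=1$ computation is correct too: for $b\equiv 1 \pmod 3$ the identity holds iff $\{bu\}\in[0,\tfrac13)$. This rules out $z_0=z_1$ and $z_2=z_0$, just as the paper does by checking $1\in S_0\cap S_1$ and $1\notin S_2$. One small point: for an arbitrary prime $\mathfrak P$ the valuation is $(p-1)\{cs/(p-1)\}$ for some unit $c$ depending on $\mathfrak P$, not just $\pm s$. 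This is harmless because you quantify over all $b$. But to speak of ``$b=1$'' you should choose $\mathfrak P$ with $g\equiv\omega_{p-1}$ modulo $\mathfrak P$, as the paper does.

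The genuine gap is the remaining case $z_1=z_2$, i.e.\ $s=r$ and $u=r/(p-1)\in(0,\tfrac13)$. This is the hard half of (b), and your plan for it fails.
\begin{itemize}
\item Since $G(\varphi^{t})G(\varphi^{-t})=\pm p$, you have $v_{\mathfrak P}(G(\varphi^{-t}))=(p-1)-v_{\mathfrak P}(G(\varphi^{t}))$ for $t\not\equiv 0$. So the $b=-1$ identity is equivalent to the $b=1$ identity.
\item For $s=r$, the $b=1$ identity holds for \emph{every} admissible $r$, not just $r=\tfrac m2$. For instance, with $p=13$ and $r=1$ the valuations give $1+9=2\cdot 5$ at $b=1$ and $11+3=2\cdot 7$ at $b=-1$. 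Only $b\in\{5,7\}$ separate $z_1$ from $z_2$.
\end{itemize}
What you actually need is a unit $b\equiv 1 \pmod 3$ modulo $p-1$ with $br \bmod (p-1)\ge m$. Your fallbacks do not produce one. No fixed small $b$ works uniformly in $r$. Averaging the identity over all units gives nothing, because the defect $\{bu\}+\{bu+\tfrac{2b}{3}\}-2\{bu+\tfrac b3\}$ changes sign under $b\mapsto -b$ and so always averages to zero.

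Any correct argument must invoke $r\neq\tfrac m2$ exactly here. For $r=\tfrac m2$, every unit $b\equiv 1 \pmod 6$ has $br\equiv\tfrac m2$, so the identity holds for all $b$, and in fact $z_1=z_2$. The paper's missing ingredient is a norm argument:
\begin{itemize}
\item $1+\omega_{p-1}^{r+m}$ is a nonzero algebraic integer exactly because $r\neq\tfrac m2$, so its norm is a nonzero rational integer.
\item Hence some conjugate satisfies $|1+\omega_{p-1}^{k(r+m)}|\ge 1$, and replacing $k$ by $-k$ if needed makes $k\equiv 1 \pmod 3$.
\item With $\ell=\pi(kr)$, this gives $\cos\bigl(2\pi(\ell+m)/(3m)\bigr)\ge -\tfrac12$, which forces $\ell\notin(0,m)$. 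That is precisely the failure of your identity at $b=k$.
\end{itemize}
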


We will prove (a) and (b) separately below.
In the meantime, we temporarily assume the lemma and put all the pieces together to prove the main result.

\begin{proof}[Proof of Theorem~\ref{thm: prime construction}, assuming Lemma~\ref{lem: irrationality and distinctness}]

To find an appropriate coordinate vector for Theorem~\ref{thm: circulant DFT}, first apply Theorem~\ref{thm: 3x3 circulant unitaries} with $u=\tfrac{1}{\sqrt{p}} \tau$, where $C_u$ is unitary and $\widehat{u}_j=\tfrac{1}{\sqrt{p}}G(\varphi^{r+jm})$ for each $j\in \mathbb{Z}_3$
by Theorem~\ref{thm: circulant DFT}. 
In the statement of Theorem~\ref{thm: 3x3 circulant unitaries},
\[
z_j = \widehat{u}_{j-1} \overline{\widehat{u}_j}
= \frac{G(\varphi^{r+(j-1)m}) \overline{G(\varphi^{r+jm})}}{p}
= \frac{G(\varphi^{r+(j-1)m})}{G(\varphi^{r+jm})}
=\frac{J(\varphi^{r+(j-1)m},\varphi^m)}{G(\varphi^m)}
\qquad
\forall j\in \mathbb{Z}_3,
\]
where the last two steps apply $|G(\varphi^{r+jm})|^2=p$ and~\eqref{eq: Jacobi Gauss sum fraction}; this matches the value of $z_j$ in~\eqref{eq: zj}.
By Lemma~\ref{lem: irrationality and distinctness}, the ``two mild properties'' hold in Theorem~\ref{thm: 3x3 circulant unitaries}, so we may apply that theorem to produce $y$ and $x$ exactly as stated in Theorem~\ref{thm: prime construction}.
By Theorem~\ref{thm: 3x3 circulant unitaries}, $x$ and $\tau * x$ are both flat, $x$ is not a multiple of a Fourier mode, and $\tau * x = \sqrt{p}\widehat{u}_0 x^* = G(\varphi^r) x^*$.
The same properties hold for $\sqrt{m} x$, which we take as a coordinate vector for $f := \sum_{j\in \mathbb{Z}_3} \sqrt{m} x_j v_j \in V$, where $V$ and $\{v_j\}_{j\in \mathbb{Z}_3}$ are as defined in Section~\ref{subsec: circulant DFT} for the case $d=3$.
Here, $f(0)=0$ and $f(g^{j+3\ell})=\sqrt{m}x_j v_j(g^{j+3\ell})=x_j \omega_{p-1}^{r(j+3\ell)}$ for all $j \in \mathbb{Z}_3$ and $\ell \in \mathbb{Z}_m$, matching the definition of $f$ in Theorem~\ref{thm: prime construction}.
By Theorem~\ref{thm: circulant DFT}, $f$ is a multiple of a conference signal, it is not a multiple of a multiplicative character, and $\hat{f} = G(\varphi^r) \overline{f}$.
Furthermore, $D_{g^3} f = \omega_m^{-r} f$ since $f\in V$.
Finally, to see that $f(-t)=(-1)^r f(t)$, observe that $m=\tfrac{p-1}{3}$ is even since $p = 1 \bmod 6$, and $(g^3)^{m/2}=g^{(p-1)/2}=-1$; thus,
\[
f(-t)=D_{-1}f(t)
=(D_{g^3}^{m/2}f)(t)
=\omega_m^{-rm/2} f(t)
=(-1)^r f(t)
\qquad
\forall t \in \mathbb{Z}_p.
\qedhere
\]
\end{proof}

\subsubsection{Irrationality}

We will prove each $z_j$ is irrational in Lemma~\ref{lem: irrationality and distinctness} by showing some field automorphism moves it.
This will take only basic Galois theory, as below.

\begin{proposition}[{\cite[Theorem~26 on p596]{DummitF:04}}]
\label{prop: Galois}
For any $n \geq 2$, the Galois group of $\mathbb{Q}(\omega_n)$ is isomorphic to $\mathbb{Z}_n^\times$
by mapping each $k \in \mathbb{Z}_n^\times$ to the unique field automorphism $\sigma_k$ with $\sigma_k(\omega_n)=\omega_n^k$.
\end{proposition}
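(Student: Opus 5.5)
The plan is the classical argument: first build an injective homomorphism $\operatorname{Gal}(\mathbb{Q}(\omega_n)/\mathbb{Q}) \to \mathbb{Z}_n^\times$, then get surjectivity from a degree count. The degree count rests on the irreducibility of the cyclotomic polynomial $\Phi_n(x) := \prod_{k \in \mathbb{Z}_n^\times} (x - \omega_n^k)$, which is the main obstacle.

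\textbf{Injective homomorphism.} Let $\sigma$ be a field automorphism of $\mathbb{Q}(\omega_n)$. Since $\sigma$ preserves multiplicative orders, $\sigma(\omega_n)$ is a primitive $n$th root of unity, so $\sigma(\omega_n) = \omega_n^{k}$ for a unique $k \in \mathbb{Z}_n^\times$. Because $\omega_n$ generates the field, $\sigma$ is determined by $k$. The map $\sigma \mapsto k$ is a homomorphism, since
\[
\sigma_k\sigma_\ell(\omega_n) = \sigma_k(\omega_n^{\ell}) = \omega_n^{k\ell}.
\]
Hence it is injective, and $|\operatorname{Gal}(\mathbb{Q}(\omega_n)/\mathbb{Q})| \le |\mathbb{Z}_n^\times|$.

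\textbf{Surjectivity.} For each $k \in \mathbb{Z}_n^\times$ we need an automorphism with $\omega_n \mapsto \omega_n^k$. The extension $\mathbb{Q}(\omega_n)/\mathbb{Q}$ is Galois, being the splitting field of $x^n - 1$. So its Galois group has order $[\mathbb{Q}(\omega_n):\mathbb{Q}] = \deg \mu$, where $\mu$ is the minimal polynomial of $\omega_n$. It therefore suffices to show $\mu = \Phi_n$, i.e.\ every primitive $n$th root $\omega_n^k$ is a root of $\mu$. Then $\deg \mu = |\mathbb{Z}_n^\times|$, and the injective homomorphism above is a bijection.

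\textbf{Irreducibility of $\Phi_n$.} Every $k \in \mathbb{Z}_n^\times$ is a product of primes not dividing $n$. So it is enough to show: if $\zeta$ is a root of $\mu$ and $q \nmid n$ is prime, then $\zeta^q$ is a root of $\mu$. I would argue as follows.
\begin{itemize}
\item[(1)] Write $x^n - 1 = \mu(x)h(x)$. By Gauss's lemma, $\mu$ and $h$ are monic with integer coefficients.
\item[(2)] Suppose $\mu(\zeta^q) \neq 0$. Then $h(\zeta^q) = 0$, so $\zeta$ is a root of $h(x^q)$, and hence $\mu(x) \mid h(x^q)$ in $\mathbb{Z}[x]$.
\item[(3)] Reduce modulo $q$. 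The Frobenius identity gives $\bar h(x^q) = \bar h(x)^q$ in $\mathbb{F}_q[x]$. So $\bar\mu$ and $\bar h$ share an irreducible factor.
\item[(4)] Then $x^n - 1 = \bar\mu\,\bar h$ has a repeated root in $\overline{\mathbb{F}_q}$. This contradicts separability: the derivative $nx^{n-1}$ is coprime to $x^n - 1$ because $q \nmid n$.
\end{itemize}
This step needs the most care: Gauss's lemma in (1) and the separability check in (4). The rest is bookkeeping.
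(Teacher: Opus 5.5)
Your proof is correct. The paper gives no argument of its own and simply cites Dummit--Foote, whose proof follows the same classical route you take: the injective homomorphism $\sigma \mapsto k$, plus irreducibility of $\Phi_n$ via reduction modulo a prime $q \nmid n$, the Frobenius identity $\bar h(x^q)=\bar h(x)^q$, and separability of $x^n-1$ over $\mathbb{F}_q$. Your one small variation, factoring $x^n-1=\mu h$ rather than $\Phi_n=\mu h$, avoids needing to know in advance that $\Phi_n$ has integer coefficients.
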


\begin{proof}[Proof of Lemma~\ref{lem: irrationality and distinctness}(a)]
We first identify an appropriate field.
Any multiplicative character $\psi$ takes values in $\mathbb{Q}(\omega_{p-1})$ since $\psi(0)=0$ and $\psi(g^j)^{p-1}=\psi(g^{j(p-1)})=\psi(1)=1$ for all $j \in \mathbb{Z}_{p-1}$.
Consequently, 
\begin{equation}
\label{eq: G psi}
G(\psi)=\widehat{\psi}(1)=\sum_{t \in \mathbb{Z}_p} \psi(t) \omega_p^{-t}
\end{equation}
lies in $\mathbb{Q}(\omega_p,\omega_{p-1})$.
Here, $\mathbb{Q}(\omega_p,\omega_{p-1})=\mathbb{Q}(\omega_{p(p-1)})$ since 
\[
\omega_{p-1}=\omega_{p(p-1)}^p,
\qquad
\omega_p=\omega_{p(p-1)}^{p-1},
\qquad\omega_{p(p-1)}=\omega_{p(p-1)}^{p-(p-1)}=\omega_{p-1}\omega_p^{-1}.
\]

Next, we identify an appropriate automorphism of $\mathbb{Q}(\omega_{p(p-1)})$.
By the Chinese Remainder Theorem, $\mathbb{Z}_{p(p-1)}^\times \cong \mathbb{Z}_p^\times \times \mathbb{Z}_{p-1}^\times$ via $k \mapsto \bigl(k \bmod p, k \bmod (p-1)\bigr)$, so there exists $k \in \mathbb{Z}_{p(p-1)}^\times$ with $k =g \bmod p$ and $k =1 \bmod (p-1)$.
Then the automorphism $\sigma_k$ from Proposition~\ref{prop: Galois} satisfies $\sigma_k(\omega_p)=\omega_p^k=\omega_p^g$ and $\sigma_k(\omega_{p-1})=\omega_{p-1}^k=\omega_{p-1}$; in particular, $\sigma_k$ fixes $\mathbb{Q}(\omega_{p-1})$.
For any multiplicative character $\psi$, applying $\sigma_k$ to both sides of~\eqref{eq: G psi} and then reindexing with $s=gt$ gives
\[
\sigma_k\bigl(G(\psi)\bigr)
= \sum_{t \in \mathbb{Z}_p} \sigma_k\bigl(\psi(t)\bigr) \sigma_k(\omega_p)^{-t}
= \sum_{t \in \mathbb{Z}_p} \psi(t) \omega_p^{-gt}
= \sum_{s\in \mathbb{Z}_p} \psi(g^{-1}s) \omega_p^{-s}
= \overline{\psi(g)} G(\psi).
\]
Thus, for any $j\in \mathbb{Z}_3$, $\sigma_k$ moves
\[
z_j = \frac{G(\varphi^{r+(j-1)m})}{G(\varphi^{r+jm})} \in \mathbb{Q}(\omega_p,\omega_{p-1})
\]
to
\[
\sigma_k(z_j)
= \frac{\overline{\omega_{p-1}^{r+(j-1)m}} G(\varphi^{r+(j-1)m})}{\overline{\omega_{p-1}^{r+jm}}G(\varphi^{r+jm})}
= \overline{\omega_{p-1}^{-m}} z_j
= \omega_3 z_j.
\]
Since $z_j \neq 0$, we conclude that $\sigma_k(z_j) \neq z_j$, so $z_j$ is irrational.
\end{proof}

\subsubsection{Distinctness}

We will show $z_0,z_1,z_2$ are distinct by showing the corresponding Jacobi sums in~\eqref{eq: zj} are distinct since they generate distinct ideals in a certain ring.
This will take some number theory.
For background, we now summarize aspects of the discussion on pages 65--66 of~\cite{BerndtEW:98}, which itself references~\cite{Washington:97}.
Let notation be as in Lemma~\ref{lem: irrationality and distinctness}.
For any choice of integers $a,b$, the Jacobi sum $J(\varphi^a,\varphi^b)$ lies in $\mathbb{Q}(\omega_{p-1})$, as seen from~\eqref{eq: Gauss and Jacobi sums}.
In fact, it lies in the ring $\mathcal{O}$ of algebraic integers in $\mathbb{Q}(\omega_{p-1})$.
It is well known that $\mathcal{O}$ is a Dedekind domain, so every nonzero ideal in $\mathcal{O}$ factors uniquely as a product of prime ideals.
For the case of an ideal generated by a nontrivial Jacobi sum $J(\varphi^a,\varphi^b)$, this factorization is known, as summarized in the proposition below.
To understand its notation, first fix a prime ideal $Q \triangleleft \mathcal{O}$ that divides $p \mathcal{O}$.
Then $\mathcal{O}/Q \cong \mathbb{Z}_p$, and there is a unique choice of $\ell \in \mathbb{Z}_{p-1}^\times$ for which $g+Q=\omega_{p-1}^\ell+Q$.
Next, we modify $Q$ to take $\ell = 1$:
using the notation of Proposition~\ref{prop: Galois} for automorphisms of $\mathbb{Q}(\omega_{p-1})$, put $P:=\sigma_\ell^{-1}(Q) \triangleleft \mathcal{O}$, so that
\[
g-\omega_{p-1}=\sigma_\ell^{-1}(g-\omega_{p-1}^\ell) \in \sigma_\ell^{-1}(Q)=P
\]
and $g+P=\omega_{p-1}+P$.
Then it holds that
\begin{equation}
\label{eq: factor pO}
p\mathcal{O} = \prod_{k \in \mathbb{Z}_{p-1}^\times} \sigma_k^{-1}(P),
\end{equation}
and the factors in this product are distinct prime ideals.
Finally, given $a \in \mathbb{Z}$, we write $\pi(a)$ for the corresponding member of $\mathbb{Z}_{p-1}=\{0,\ldots,p-2\}$ with $\pi(a) = a \bmod (p-1)$.

\begin{proposition}[{\cite[Theorem~2.1.14 on p66]{BerndtEW:98}}]
\label{prop: ideals}
Choose any $a,b\in \mathbb{Z}_{p-1}$ with $a,b,a+b\neq0$.
Then, in the notation above,
\[
J(\varphi^a,\varphi^b)\mathcal{O}
= \prod_{k \in S}
\sigma_k^{-1}(P),
\qquad
S:=\{k \in \mathbb{Z}_{p-1}^\times : \pi(k a)+\pi(k b)<p-1\}.
\]
\end{proposition}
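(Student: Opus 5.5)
We prove the factorization prime by prime. The prime divisors of $p\mathcal{O}$ are the distinct ideals $\sigma_k^{-1}(P)$ for $k \in \mathbb{Z}_{p-1}^\times$, by~\eqref{eq: factor pO}. So it suffices to show two things: $J := J(\varphi^a,\varphi^b)$ has no other prime divisors and no repeated ones, and for each $k$ we have $\sigma_k^{-1}(P) \mid J\mathcal{O}$ exactly when $k \in S$.

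\textbf{Step 1: only the primes over $p$, each with exponent at most one.}
Since $\varphi^a,\varphi^b,\varphi^{a+b}$ are nontrivial, \eqref{eq: Jacobi Gauss sum fraction} and $|G(\psi)|^2=p$ give $|J|^2 = p$. Thus $J\overline{J}=p$, so $J\mathcal{O}\cdot \overline{J}\mathcal{O} = p\mathcal{O}$. Because the factorization~\eqref{eq: factor pO} is squarefree, $J\mathcal{O}$ is a product of distinct primes $\sigma_k^{-1}(P)$, each appearing with exponent $0$ or $1$. It remains to decide, for each $k$, whether $J \in \sigma_k^{-1}(P)$, or equivalently whether $\sigma_k(J)\in P$.

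\textbf{Step 2: apply the automorphism.}
The values of $\varphi$ are powers of $\omega_{p-1}$, and the sum~\eqref{eq: Gauss and Jacobi sums} has rational coefficients, so $\sigma_k(J) = J(\varphi^{ka},\varphi^{kb})$. Put $A := \pi(ka)$ and $B := \pi(kb)$. Since $k$ is a unit and $a,b\neq 0$, we have $1 \le A,B \le p-2$, and $J(\varphi^{ka},\varphi^{kb}) = \sum_t \varphi(t)^A\varphi(1-t)^B$.

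\textbf{Step 3: reduce modulo $P$.}
The key congruence is $\varphi(t) \equiv t \pmod P$ for every $t\in\mathbb{Z}_p$, where $\mathcal{O}/P\cong\mathbb{Z}_p$. For $t = g^j$ this follows from $\varphi(g)=\omega_{p-1}\equiv g$. For $t=0$ it holds because $\varphi(0)=0$. Hence
\[
\sigma_k(J) \equiv \sum_{t\in\mathbb{Z}_p} t^A(1-t)^B = \sum_{i=0}^{B}\binom{B}{i}(-1)^i \sum_{t\in\mathbb{Z}_p} t^{A+i} \pmod P .
\]
Here $A\ge1$ and $B \ge 1$, so the terms at $t=0$ and $t=1$ vanish on both sides and the polynomial model is exact. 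Now use the standard power-sum fact: $\sum_{t} t^e \equiv -1$ when $e>0$ and $(p-1)\mid e$, and $\sum_t t^e \equiv 0$ otherwise. The exponents satisfy $1\le A+i\le A+B\le 2p-4$, so the only exponent that can contribute is $A+i=p-1$.

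\textbf{Step 4: decide divisibility.}
If $A+B<p-1$, no exponent reaches $p-1$, so $\sigma_k(J)\in P$ and hence $\sigma_k^{-1}(P)\mid J\mathcal{O}$. If $A+B\ge p-1$, exactly one term survives, namely $i=p-1-A$, which lies in $[0,B]$. That term is $\pm\binom{B}{p-1-A}$, which is nonzero mod $p$ because $B<p$. So $\sigma_k(J)\notin P$.

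Combining Steps 1 through 4 gives $J\mathcal{O}=\prod_{k\in S}\sigma_k^{-1}(P)$. The one delicate step is the congruence $\varphi(t)\equiv t \pmod P$, which relies on the normalization $g+P=\omega_{p-1}+P$ chosen in the construction of $P$.
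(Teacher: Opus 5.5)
Your proof is correct and complete. The identity $J\overline{J}=p$ together with the squarefree factorization \eqref{eq: factor pO} reduces everything to deciding whether $\sigma_k(J)\in P$. The congruence $\varphi(t)\equiv t \pmod P$, which is exactly what the normalization $g+P=\omega_{p-1}+P$ provides, then settles this via power sums. You get $\sigma_k(J)\equiv 0$ when $\pi(ka)+\pi(kb)<p-1$, and $\sigma_k(J)\equiv \pm\binom{B}{p-1-A}\not\equiv 0$ otherwise, so the direction of the inequality in $S$ comes out right.

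The paper gives no proof of its own here. It quotes the result from \cite{BerndtEW:98}, whose discussion points to \cite{Washington:97}. There, factorizations of this kind are usually derived from Stickelberger's theorem. One factors the Gauss sums in $\mathbb{Q}(\omega_{p(p-1)})$, where $p$ is ramified, by computing their valuations at a prime above $p$, and then divides using \eqref{eq: Jacobi Gauss sum fraction}.

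Your route avoids the ramified extension entirely. Because $p$ splits completely in $\mathbb{Q}(\omega_{p-1})$ and $J\overline{J}=p$ rules out multiplicities, membership of $J$ in each $\sigma_k^{-1}(P)$ is detected by a single residue computation in $\mathbb{Z}_p$. So your argument is elementary and self-contained, using only ingredients the paper already has on hand. It also produces the binomial congruence for $\sigma_k(J)$ as a byproduct. The Stickelberger route is heavier, but it yields more: the prime factorizations of the Gauss sums themselves.
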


\begin{proof}[Proof of Lemma~\ref{lem: irrationality and distinctness}(b)]
By the formula for $z_{j+1}$ in~\eqref{eq: zj again}, it suffices to show the three Jacobi sums $J(\varphi^{r+jm},\varphi^m)$, $j\in \mathbb{Z}_3$, generate distinct ideals in $\mathcal{O}$.
Proposition~\ref{prop: ideals} gives
\[
J(\varphi^{r+jm},\varphi^m) \mathcal{O}
= \prod_{k\in S_j} \sigma_k^{-1}(P),
\quad
S_j:=\{k \in \mathbb{Z}_{p-1}^\times : \pi(k (r+jm))+\pi(k m)<p-1\}
\qquad
\forall j \in \mathbb{Z}_3,
\]
so it suffices to show $S_0,S_1,S_2$ are distinct.
(Here we use the fact that $\mathcal{O}$ is a Dedekind domain and the factors in~\eqref{eq: factor pO} are distinct prime ideals.)

First, we distinguish $S_2$ from $S_0$ and $S_1$ by showing $1\notin S_2$ while $1\in S_0\cap S_1$.
Indeed, recalling that $1 \leq r < m:=\tfrac{p-1}{3}$, $r \neq \tfrac{m}{2}$, we have 
\[
\pi(r+jm)+\pi(m)=r+jm+m
=r+(j+1)m
<(j+2)m
\qquad
\forall j\in \{0,1,2\}.
\]
When $j\in \{0,1\}$ we have $(j+2)m\leq 3m=p-1$, so $1 \in S_j$; when $j=2$ we have $r+(j+1)m=r+(p-1)>p-1$, so $1 \notin S_j$.
Thus, $S_2$ differs from each of $S_0$ and $S_1$.

It remains to distinguish $S_0$ from $S_1$ by producing a member of $\mathbb{Z}_{p-1}^\times$ that belongs to one but not the other.
To achieve this, first consider the algebraic integer $a:=1+\omega_{p-1}^{r+m} \in \mathbb{Q}(\omega_{p-1})$.
Here, $a \neq 0$ by our assumption $r \neq \tfrac{m}{2}$: since $0 < r+m<3m=p-1$ we would have $a=0$ only if $r+m=\tfrac{p-1}{2}=\tfrac{3m}{2}$.
Then $b:=\prod_{k\in \mathbb{Z}_{p-1}^\times} \sigma_k(a) \neq 0$ is fixed by every field automorphism of $\mathbb{Q}(\omega_{p-1})$, so it is rational; since $b$ is also an algebraic integer, it must be an integer.
Consequently,
\[
1 \leq |b|^2
=\prod_{k\in \mathbb{Z}_{p-1}^\times} |\sigma_k(a)|^2
=\prod_{k\in \mathbb{Z}_{p-1}^\times} \Bigl|1 + \omega_{p-1}^{k(r+m)} \Bigr|^2,
\]
and there exists $k \in \mathbb{Z}_{p-1}^\times$ for which $\Bigl|1 + \omega_{p-1}^{k(r+m)} \Bigr|^2 \geq 1$.
Furthermore, we may take a complex conjugate to replace $k$ with $-k$ if necessary and assume that $k = 1 \bmod 3$, where $k \neq 0 \bmod 3$ since it is coprime with $p-1=0 \bmod 6$.

In the remainder of the proof, we show that this choice of $k$ belongs to exactly one of $S_0$ and $S_1$ by showing that exactly one of the two numbers
\[
\pi(kr)+\pi(km),
\qquad
\pi(kr+km)+\pi(km)
\]
is less than $p-1$.
Here, $\pi(km)=m$ since $km-m=(k-1)m$ is divisible by $3m=p-1$ by our assumption $k=1 \bmod 3$.
Now abbreviate $\ell := \pi(kr)$, so that $kr+km=\ell+m \bmod (p-1)$; then our task is to show exactly one of
\[
\pi(kr)+\pi(km)
=\ell+m,
\qquad
\pi(kr+km)+\pi(km)
=\pi(\ell+m)+m
\]
is less than $p-1=3m$, or equivalently, exactly one of $\ell, \pi(\ell+m)$ is less than $2m$.

We claim that one of two things happens: $m < \ell < 2m$, or $2m < \ell < 3m$.
To see this, first use the fact that $kr+km = \ell+m \bmod (p-1)$ to obtain
\[
1 \leq \Bigl|1 + \omega_{p-1}^{k(r+m)} \Bigr|^2
= \Bigl|1 + \omega_{p-1}^{\ell+m} \Bigr|^2
= 2+2\cos\Bigl( \tfrac{2\pi(\ell+m)}{3m}\Bigr),
\]
that is, $\cos\bigl( \tfrac{2\pi}{3}\cdot \tfrac{\ell+m}{m} \bigr) \geq -\tfrac{1}{2}$.
Consequently, $\tfrac{\ell+m}{m} \notin (1,2)$, that is, $\ell \notin (0,m)$.
To rule out the endpoints, assume for the sake of contradiction that $\ell\in\{0,m,2m\}$.
Then $m$ divides $rk$ since $3m$ divides $rk - \pi(rk)=rk-\ell$.
Here, $k\in \mathbb{Z}_{3m}^\times$ is coprime with $m$, so $m$ divides $r$, contrary to the fact that $0 < r < m$.
Consequently, $\ell \notin \{0,m,2m\}$; this proves the claim since $\ell = \pi(kr) \in [0,3m)$.

In the case where $m < \ell < 2m$, we have $2m <\ell+m<3m$, so that $\pi(\ell+m)=\ell+m$ and
\[
\ell < 2m < \pi(\ell+m).
\]
In the case where $2m < \ell < 3m$, we have $3m < \ell+m<4m$, so that $\pi(\ell+m)=\ell-2m<m$ and
\[
\pi(\ell+m)<2m<\ell.
\]
This completes the proof.
\end{proof}

\section{Discussion}
\label{sec: discussion}

We end with some discussion.
In this paper, we gave motivation for studying conference signals, described emergent features of a numerical search, gave nonconstructive proofs of widespread existence, and detailed two infinite families of explicit conference signals, one of composite order and the other of prime order.
Our explicit constructions made progress on a question that continues to drive us crazy: \textit{What are all these conference signals?}

\subsection{Future directions}

Eventually, we want to understand more constructions, as in the (ambitious) problem below.

\begin{problem}
\label{prob: all conference signals}
Find all conference signals on finite abelian groups, make sense of their constructions, and present intuitive explanations that others can understand.
\end{problem}

Problem~\ref{prob: emergent features} suggests some directions toward Problem~\ref{prob: all conference signals}; more follow.

\begin{problem}
For each item below, find the answer, understand the proof, and present an intuitive explanation that others can understand.
\begin{itemize}
\item[(a)]
For which $n$, $m\in \mathbb{Z}_n^\times$, and $\lambda \in \mathbb{C}$ does there exist a conference signal in the $\lambda$-eigenspace of $D_m$?
\item[(b)]
For which $n$ does $\mathbb{Z}_n$ admit an even conference signal?
\item[(c)]
For which $n$ does there exist a conference signal in the $1$-eigenspace of the DFT?
\item[(d)]
For which circulant unitaries $C_u$ does there exist a vector $x$ that is flat, not a multiple of a Fourier mode, and satisfies $u*x=\lambda x^*$ for some $\lambda\in \mathbb{C}$?
\item[(e)]
For every positive existence result above and in Problem~\ref{prob: emergent features}, explicitly construct all examples.
\end{itemize}
\end{problem}

\subsection{Similarities with SICs and Zauner's conjecture}

We feel another point deserves mention.
Throughout the course of this project, the authors have been struck by similarities between conference signals and SICs, where a \textbf{SIC} describes $d^2$ equiangular lines in $\mathbb{C}^d$.
For every~$d$, \textbf{Zauner's conjecture} predicts the existence of a SIC arising from a special vector in a certain eigenspace of the \textit{Fourier chirp} operator~\cite{Zauner:11}.
Readers familiar with this problem might appreciate the following commonalities between SICs and conference signals:
\begin{itemize}
\item[(i)]
conjectured existence at every point in an arithmetic family,
\item[(ii)]
conjectured existence in a specific eigenspace of a Fourier-like operator,
\item[(iii)]
seemingly isolated numerical solutions that are extremely difficult to reverse engineer,
\item[(iv)]
solutions with algebraic but non-cyclotomic entries,
\item[(v)]
unexpected interest by number theorists in a problem from harmonic analysis.
\end{itemize}
Furthermore, any odd conference signal of order~$n \geq 3$ produces a complex symplectic ETF consisting of~$n$ vectors in dimension~$n-1$, while the existence of a real symplectic ETF with those parameters for $n = 3 \bmod 4$ is described as an analogue of Zauner's conjecture in~\cite{FallonLAA:26}.

Despite these similarities, we have no hard evidence of a connection between the two areas, and we do not know any way to (say) turn a SIC into a conference signal, or vice versa.
Nevertheless, the analogy might be worth investigating further, especially since a solution of Zauner's conjecture was just announced~\cite{ApplebyFK:25,RadchenkoWh:26,RadchenkoW:26}.

\section{Use and non-use of AI}
\label{sec: AI}

ChatGPT made contributions for which any human would be a co-author.
Specifically, it: found Theorem~\ref{thm: n = 3p}; found initial proofs of preliminary versions of Theorems~\ref{thm: odd coneig existence}, \ref{thm: n = 3p}, \ref{thm: 3x3 circulant unitaries}, and Lemma~\ref{lem: irrationality and distinctness}; helped navigate the literature, drawing our attention to~\cite{Benoist:24} and the line of research on Cohn's problem; helped write and verify Mathematica code to implement the Newton--Kantorovich certifications in Example~\ref{ex: library}; made BibTeX entries; made the figures; wrote a preliminary draft of Section~\ref{sec: explicit prime} (see below); and suggested minor revisions for a near-final version of the manuscript.

The human authors did everything else.
In particular, we: initiated the project, which grew out of KF's PhD thesis~\cite{Fallon:26}; directed the research; interacted with ChatGPT much as we would with a human collaborator; digested, revised, and simplified the arguments supplied by ChatGPT; proved anything not explicitly identified above; and wrote the paper ourselves through Section~\ref{sec: explicit composite}.

For Section~\ref{sec: explicit prime}, JWI tried an experimental writing technique.
First, he gave ChatGPT his handwritten notes and a rough outline and had it adapt them into a draft of the section.
Then, he had it read his earlier papers, learn his style, and rewrite the draft to mimic his voice.
Next, he edited large-scale features by directing it to make specific changes to the organization, notation, and presentation until he was satisfied with everything but the language and level of detail.
Finally, he wrote the entire section again carefully himself, using ChatGPT's draft as a guide.
ChatGPT estimates that JWI replaced about 90\% of the language in this final (human) rewrite of Section~\ref{sec: explicit prime}.

The human authors bear full responsibility for any defects in this article, and we share credit for its merits with ChatGPT.

\section*{Further acknowledgments}
JWI was supported by NSF DMS 2220301.
The authors are pleased to thank Dustin Mixon (Ohio State) and Clay Shonkwiler (Colorado State) for insightful conversations.
In addition, JWI thanks Christina Iverson for a thoughtful conversation on the role of software assistance in crossword puzzle creation.
That conversation inspired his experimental approach to writing Section~\ref{sec: explicit prime}.

\bigskip

\bibliographystyle{abbrv}
\bibliography{refs}

\end{document}